\documentclass[11pt]{article}
\usepackage{a4,amssymb,amsthm,amsmath}

\theoremstyle{plain}
\newtheorem{thm}{Theorem}
\newtheorem{lem}{Lemma} 

\theoremstyle{definition} 
\newtheorem{example}{Example}
\newtheorem{prob}{Problem}
\newtheorem{defn}{Definition}
\newtheorem{rem}{Remark}
\newtheorem{vanlem}{Vanishing Lemma}

\newcommand{\nicefrac}[2]{\leave§ vmode\kern.1em
\raise.5ex\hbox{\the\scriptfont0 #1}\kern-.1em
/\kern-.15em\lower.25ex\hbox{\the\scriptfont0 #2}}
\def\halv{\mathchoice{{ \frac 1 2}}{1/2}{1/2}{1/2}}

\newcommand{\C}{{\mathbb C}} 
\newcommand{\R}{{\mathbb R}} 

\newcommand{\norm}[1]{\left \Vert {#1} \right \Vert}

\newcommand{\abs}[1]{{\left| {#1} \right|}} \newcommand{\p}[1]{{\left(
      {#1} \right)}} 

\newcommand{\Oh}[1]{{O \p{#1}}}

\renewcommand{\Re}{\operatorname{Re}} 
\nonstopmode
 \begin{document}
\date{}

\author{Johan Andersson\thanks{Department of Mathematics, Stockholm University, SE-106 91 Stockholm SWEDEN, Email: {\texttt {johana@math.su.se}.}}}

\title{On generalized Hardy classes  of Dirichlet series}

\maketitle

\begin{abstract}
We generalize the Hardy class $H^2$ of Dirichlet series studied by  Hedenmalm, Lindqvist, Olofsson, Olsen, Saksman, Seip and others to consider  more general Dirichlet series. We prove some results on this class, such as estimates for its  logarithmic $ L^1$-norm  in short intervals. We relate this to, and use these results to make  a recent nonvanishing result of Dirichlet series of ours more explicit. In particular we give an application on the Hurwitz zeta-function.\end{abstract}

\tableofcontents

\section{A nonvanishing result for Dirichlet series} 
A  recent result\footnote{First presented at the Tata institute of fundamental research, Mumbai at the international conference in analytic number theory, October 6th 2009} that allow us to prove things like non universality of the Hurwitz zeta-function on the line $\Re(s)=1$, give us lower estimates for means of the Riemann zeta-function close to $\Re(s)=1$, as well as a complete solution to a problem of Ramachandra, see \cite{Andersson4}  is the following:

\begin{vanlem} Any Dirichlet series that is identically zero on an interval of absolute convergence is identically zero on the complex plane. \end{vanlem}

We first remark that unless the interval lies on the Dirichlet series abscissa of convergence, the function is analytic on the interval and the result is immediate. The general case does not turn out to be much more difficult however. We will prove results in this paper that will generalize this result and make it more explicit. The proper class to use for generalizing this is a the class of absolutely convergent Dirichlet series. However, we will also develop  the theory of $H^2$-classes, which is somewhat deeper and is interesting in its own right.

\section{The $H^2$ Hardy class of Dirichlet series}

 We say that the Dirichlet series $L \in H^2$ if
\begin{gather} \label{Lcl}
 L(s)=\sum_{n=1}^\infty a_n n^{-s}, \\ \intertext{and} \label{norm}
 \norm{L}_2^2= \sum_{n=1}^\infty \abs{a_n}^2<\infty.
\end{gather}
This class has been studied by for example Hedenmalm-Saksman \cite{HedSaks}, \, Saksman-Seip \cite{SaksSeip},  \, Hedenmalm-Lindqvist-Seip, \, \cite{HedLinSeip, HedLinSeip2}, Olofsson \cite{Olofsson},  Olsen-Seip \cite{Olsen} and Olsen-Saksman \cite{OlsenSaksman}.

The primary way this class of Dirichlet series has been studied is to identify a Dirichlet series with an infinite dimensional Fourier series $L^2(T^\infty)$, where each variable corresponds to a prime. This is an important method that has also been extensively used in the theory of Universality of zeta-functions, see e.g. Laurin{\v{c}}ikas  \cite{Laurincikas} or Steuding \cite[p. 65]{Steuding}. An example of an important result is the  result of Hedenmalm-Saksman \cite{HedSaks}  of finding an analogue of Carleson's theorem that is valid both when characters are considered as the parameter space as well as when the parameter space has been the variable $s$.

Our method will instead be related to the fact that by the Riemann mapping theorem there is a holomorphic bijection between the half plane and the disc.  Thus we will reduce the study of the Hardy space $H^2$ of Dirichlet series, holomorphic on the half-plane $\Re(s)>1/2$ to the Hardy space $H^2(T)$ of holomorphic functions on the disc. For the theory of $H^2(T)$ see e.g. \cite[Chapter 17]{Rudin}.

 This is an even more classical situation, where a lot of results are known, some of which we can be  transferred to the $H^2$ space of Dirichlet series. The proof we presented in Mumbai for Theorem 1 depended on this bijection and used the fact that the zero-set of a function in the Hardy class $H^2(T)$ has zero measure unless the function is identically zero. In this paper we will also make use of this bijection to move  back to the half-plane to obtain an effective version of Theorem 1. Doing this gives us a version of Jensen's inequality on a half plane. This is essentially done in Koosis \cite[pp. 49-52]{Koosis}, although our result will have a slightly different formulation. 

In this situation it is no longer  natural to use the characters as a parameter space, so those results can not be obtained by this method.  However, the study of a single function $L(s)$ on the line $\Re(s)=\halv$ lends itself natural to this situation. This line $\Re(s)=\halv$ may be the abscissa of convergence, but does not need to be, since even Dirichlet polynomials which are convergent everywhere belongs to $H^2$. We remark that with this normalization it roughly corresponds to the Riemann zeta-function for $\Re(s)=1$, although the Riemann zeta-function $\zeta(s+1/2)$ does not belong to  $H^2$. 

\section{Generalized $H^2$ Hardy classes of Dirichlet series}

We will choose to show our result for a somewhat more general class of Dirichlet series. Let 

\begin{gather} 
       L(s)= \sum_{n=0}^\infty a_n e^{-\lambda_n s}, \label{Ldef} \\ \intertext{and assume that we have the Dirichlet condition} 
0=\lambda_0< \lambda_1 < \lambda_2 \cdots, \label{dircon} \\ \intertext{Furthermore assume that}
 \frac{e^{-\sigma (\lambda_{n}+\lambda_m) }}{\abs{\lambda_{n}-\lambda_m}}  \leq C, \qquad \qquad (n \neq m). \label{Cdef}
\end{gather}
We will define the norm similarly to \eqref{norm}
\begin{gather}
  \label{norm2}
   \norm{L}_2^2=\sum_{n=0}^\infty |a_n|^2.
\end{gather}
We can now define our extended class of Dirichlet series. We will find it convenient to make two definitions.
\begin{defn}
 We say that $L(s)$ belongs to the extended class of Dirichlet series $H^2(\lambda_n,\sigma)$ whenever we have \eqref{Ldef} -  \eqref{norm2}, for some $C>0$ and  the norm 
$\norm{L}_2<\infty$.
\end{defn}
\begin{defn}
 We say that $L(s)$ belongs to the extended class of Dirichlet series $H^2(C,\sigma)$ whenever we have \eqref{Ldef} -  \eqref{norm2}, and  the norm $\norm{L}_2<\infty$.
\end{defn}

\begin{thm}
  Suppose $\lambda_n$ fulfill the Dirichlet condition  \eqref{dircon} and the inequality \eqref{Cdef}. Then  
 \begin{gather*}
    H^2(\lambda_n,\sigma) \subset H^2(C,\sigma).
  \end{gather*}
\end{thm}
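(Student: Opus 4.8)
\emph{Proof proposal.}

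The plan is to verify the inclusion directly from Definitions 1 and 2, the only point requiring attention being the bookkeeping of which parameter is held fixed in each of the two classes. First I would fix an arbitrary $L \in H^2(\lambda_n,\sigma)$ and record that, by Definition 1, it admits a representation \eqref{Ldef} with the prescribed exponent sequence $\lambda_n$ --- the very sequence appearing in the hypothesis --- together with $\sum_{n=0}^\infty \abs{a_n}^2 = \norm{L}_2^2 < \infty$ as in \eqref{norm2}, and that \eqref{Cdef} holds for it with some admissible constant.

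Next I would invoke the hypothesis of the theorem: the sequence $\lambda_n$ satisfies both the Dirichlet condition \eqref{dircon} and the inequality \eqref{Cdef} with the given constant $C$. Since \eqref{Cdef} is a condition on the exponents alone and does not involve the coefficients $a_n$, the same data \eqref{Ldef}--\eqref{norm2} that exhibit $L$ as a member of $H^2(\lambda_n,\sigma)$ also exhibit it as a member of $H^2(C,\sigma)$: one simply takes the exponent sequence to be $\lambda_n$ and the constant to be exactly the $C$ furnished by the hypothesis. Hence $L \in H^2(C,\sigma)$, and since $L$ was arbitrary the inclusion follows.

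There is no genuine obstacle here; the statement amounts to unwinding the two definitions. The one subtlety worth flagging is that membership in $H^2(\lambda_n,\sigma)$ only asks for the existence of \emph{some} admissible constant, whereas membership in $H^2(C,\sigma)$ pins $C$ down; the theorem's hypothesis is precisely what guarantees that this particular $C$ is admissible for the fixed sequence $\lambda_n$, so the passage from ``some $C$'' to ``this $C$'' is immediate. If one wishes, one may additionally note the trivial monotonicity $H^2(C,\sigma) \subseteq H^2(C',\sigma)$ whenever $C \leq C'$, coming straight from \eqref{Cdef}, although it is not needed for the argument above.
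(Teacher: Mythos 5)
Your proposal is correct and takes essentially the same route as the paper, which simply observes that the inclusion follows directly from Definitions 1 and 2; you have merely unwound the definitions more explicitly. The point you flag about the hypothesis supplying the specific admissible constant $C$ is exactly the content of the (one-line) proof in the paper.
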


\begin{proof}
This follows directly from the Definitions 1 and 2.
\end{proof}
\begin{rem} The Hardy class $H^2(\lambda_n,\sigma)$ is closed under addition, while $H^2(C,\sigma)$ is not. \end{rem}
\subsection{Examples of Dirichlet series from our extended Hardy class}

We give some examples
\begin{example}
 The Hardy class $H^2$ of Dirichlet series defined by \eqref{Lcl} and \eqref{norm} is exactly the Hardy class $H^2(\log (n+1),\halv)$.
\end{example}

\begin{example}
 The classical Hardy class $H^2(T)$ on the circle is exactly the Hardy class $H^2(2 \pi n,0)$.
\end{example}
Thus the two classical Hardy classes are in fact special cases of our extended Hardy-classes of Dirichlet series. For the classical theory of the Hardy class $H^2(T)$ on the circle, see Katznelson \cite{Katznelson} or Rudin \cite{Rudin}. We will find it convenient to also calculate the $C$ in \eqref{Cdef} for these examples.
\begin{lem}
  It is true that  $H^2(\log (n+1),\halv) \subset H^2((\sqrt{2} \log 2)^{-1},1/2),$ where $(\sqrt{2} \log 2)^{-1} = 1.02014$  and that $H^2(2 \pi n,0) \subset H^2(2 \pi,\sigma)$.
\end{lem}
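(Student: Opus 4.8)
The plan is to compute, for each of the two example families, the best constant $C$ that can appear in the inequality \eqref{Cdef}, that is, to bound
\[
  \sup_{n \neq m} \frac{e^{-\sigma(\lambda_n + \lambda_m)}}{\abs{\lambda_n - \lambda_m}}
\]
explicitly. For the Dirichlet series case we have $\lambda_n = \log(n+1)$ and $\sigma = 1/2$, so $e^{-\sigma(\lambda_n+\lambda_m)} = ((n+1)(m+1))^{-1/2}$ and $\abs{\lambda_n - \lambda_m} = \abs{\log\frac{n+1}{m+1}}$; by symmetry we may assume $n > m$, write $n+1 = N$, $m+1 = M$ with $N > M \geq 1$, and we must bound $\frac{1}{\sqrt{NM}\,\log(N/M)}$ from above. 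For the circle case $\lambda_n = 2\pi n$, $\sigma = 0$, so $e^{-\sigma(\lambda_n+\lambda_m)} = 1$ and $\abs{\lambda_n - \lambda_m} = 2\pi\abs{n-m} \geq 2\pi$, giving immediately the constant $2\pi$ with no dependence on $\sigma$; this second half is a one-line verification.

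For the first half the key step is to check that the worst case is the closest possible pair, namely consecutive integers $M$ and $N = M+1$. Fixing $M$ and letting $N$ increase, $\sqrt{NM}\log(N/M)$ is increasing in $N$, so among $N > M$ the minimum of the denominator occurs at $N = M+1$; hence it suffices to maximize over $M \geq 1$ the quantity $\bigl(\sqrt{M(M+1)}\,\log\frac{M+1}{M}\bigr)^{-1}$. I would then show $g(M) = \sqrt{M(M+1)}\,\log(1 + 1/M)$ is increasing in $M \geq 1$ (e.g. by differentiating, or by comparing $\log(1+1/M)$ with $\frac{1}{\sqrt{M(M+1)}}$ and noting the product tends to $1$ from below as $M \to \infty$), so the supremum of $1/g(M)$ is attained at $M = 1$, where $g(1) = \sqrt{2}\,\log 2$. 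This yields $C = (\sqrt{2}\,\log 2)^{-1} = 1.02014\ldots$, and by Theorem 1 (or rather Definition 2 together with the abscissa bookkeeping) $H^2(\log(n+1), \halv) \subset H^2((\sqrt{2}\log 2)^{-1}, 1/2)$.

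The main obstacle — though a mild one — is verifying the monotonicity claims rigorously: that $N \mapsto \sqrt{NM}\log(N/M)$ increases and that $M \mapsto g(M)$ increases, so that the two-variable supremum really is pinned down by the single point $(M,N) = (1,2)$. Both reduce to elementary one-variable calculus (sign of a derivative) or to standard inequalities for $\log(1+x)$, so no genuine difficulty arises; one just has to be careful that $g(M) < 1$ for all finite $M$ while $g(M) \to 1$, which is what makes $M=1$ rather than $M = \infty$ the extremal case. The numerical value $1.02014$ is then just $\bigl(\sqrt{2}\log 2\bigr)^{-1}$ evaluated to five digits.
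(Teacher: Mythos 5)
Your proposal is correct and follows essentially the same route as the paper: the paper's proof simply asserts that the consecutive-pair quantity $\frac{((n+1)(n+2))^{-1/2}}{\log(n+2)-\log(n+1)}$ is maximized at $n=0$ with value $(\sqrt{2}\log 2)^{-1}$ and calls the circle case immediate, while you additionally make explicit the (correct) reduction of the two-variable supremum to consecutive integers via the monotonicity of $N\mapsto\sqrt{NM}\log(N/M)$. The only cosmetic point is that in the circle case the optimal constant is $(2\pi)^{-1}$ rather than $2\pi$, but since \eqref{Cdef} is a one-sided bound this only strengthens the stated containment.
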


\begin{proof} The first part follows from the fact that
\begin{gather*}
  \frac{((n+1)(n+2))^{-1/2}} {\log(n+2)-\log(n+1)}, \\ 
\intertext{is maximized for $n=0$ when $n \geq 0$ and}
 \frac{((0+1)(0+2))^{-1/2}} {\log(0+2)-\log(0+1)}=\frac 1 {\sqrt 2 \log 2}.
\end{gather*}
The second part is immediate.
\end{proof}

These two examples are in fact the prototypes for the two main cases that can be obtained. The first case also includes for example the case when $\lambda_n \sim  \log n$. The second case includes the case when $\lambda_n \sim n$. 

Except for the natural example $\log n$ in the first class, there seems to be one other natural example that turns up in number theory. if $\lambda_n=\log(n+\alpha)-\log \alpha$ we can almost include the Hurwitz zeta-function $\alpha^s \zeta(s,\alpha)$ in the class $H^2(\log(n+\alpha)-\log \alpha)$. However, it does not quite belong to the Hardy class proper. 
Instead we will consider the following example:
\begin{example}
  The zeta-functions 
  \begin{gather}
   \zeta^{(-z)} (s,\alpha) \alpha^s= \alpha^s \sum_{n=0}^\infty (n+\alpha)^{-s} \p {\log (n+\alpha)}^{-z},
  \end{gather}
 belongs to $H^2(\log(n+\alpha)-\alpha,\halv)$ whenever $\Re(z)>1/2$.
\end{example}
In particular for $z=-1$ it  will just be a primitive function, and on its abscissa of convergence we have
\begin{gather} \label{hurw}
 \zeta^{(-1)}(1+it,\alpha) = -i\int_1^t \zeta(1+ix,\alpha) dx+c_\alpha, \qquad \qquad (t>0),
\end{gather}
where $\zeta(s,\alpha)= \zeta^{(-0)}(s,\alpha)$ denotes the Hurwitz zeta function.

We will also prove the result corresponding  to Theorem 1.
\begin{lem}
 We have that $H^2(\log (n+\alpha)-\log \alpha,\halv) \subset H^2((\sqrt 2 \log 2)^{-1},1/2)$ whenever $0<\alpha<1$.
\end{lem}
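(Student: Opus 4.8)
The plan is to check directly that the sequence $\lambda_n=\log(n+\alpha)-\log\alpha$ satisfies \eqref{Cdef} with $\sigma=\halv$ and the constant $C=(\sqrt2\log2)^{-1}$; granting that, the asserted inclusion is immediate from the definitions, exactly as in the companion lemma for $H^2(\log(n+1),\halv)$ above. First I would substitute: since $e^{-\lambda_n/2}=\bigl(\alpha/(n+\alpha)\bigr)^{1/2}$ and $\lambda_n-\lambda_m=\log\frac{n+\alpha}{m+\alpha}$, the left-hand side of \eqref{Cdef} is
\[
 \frac{e^{-(\lambda_n+\lambda_m)/2}}{\abs{\lambda_n-\lambda_m}}
 =\frac{\alpha}{\sqrt{(n+\alpha)(m+\alpha)}\;\bigl|\log\tfrac{n+\alpha}{m+\alpha}\bigr|},\qquad(n\neq m).
\]
By symmetry I may assume $n>m\geq0$; putting $x=n+\alpha$, $y=m+\alpha$ and $t=x/y>1$, this becomes $\alpha\bigl(y\sqrt t\,\log t\bigr)^{-1}$.

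The next step is to see that the supremum over admissible pairs is attained, in the limit, at $(n,m)=(1,0)$. Two monotonicity facts do this. For fixed $y$ the map $t\mapsto\sqrt t\,\log t$ is increasing on $(1,\infty)$, and the Dirichlet condition \eqref{dircon} forces $x\geq y+1$, so $t\geq(y+1)/y$; hence $y\sqrt t\,\log t\geq h(y):=\sqrt{y(y+1)}\,\log\frac{y+1}{y}$. I would then verify that $h$ is increasing on $(0,\infty)$: differentiating gives $h'(y)>0\iff(2y+1)\log(1+1/y)>2$, and with $w=1/y$ this is $\log(1+w)>\tfrac{2w}{2+w}$, an elementary inequality (both sides vanish at $w=0$ and comparing derivatives reduces to $w^2>0$). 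Since $y=m+\alpha\geq\alpha$, we obtain $y\sqrt t\,\log t\geq h(\alpha)$.

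It remains to compare $h(\alpha)=\sqrt{\alpha(1+\alpha)}\,\log\frac{1+\alpha}{\alpha}$ with $\alpha\sqrt2\,\log2$. Dividing by $\alpha$ and writing $s=(1+\alpha)/\alpha=1+1/\alpha$, the desired inequality $h(\alpha)>\alpha\sqrt2\log2$ reads $\sqrt s\,\log s>\sqrt2\,\log2$, which holds because $0<\alpha<1$ gives $s>2$ and $s\mapsto\sqrt s\,\log s$ is increasing on $[1,\infty)$. Therefore the left-hand side of \eqref{Cdef} is at most $\alpha/h(\alpha)<(\sqrt2\log2)^{-1}$ for all $n\neq m$, so $\lambda_n$ satisfies \eqref{Cdef} with $C=(\sqrt2\log2)^{-1}$ and the inclusion follows. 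The one place that needs a little care is the reduction to the pair $(1,0)$ — a priori a pair with $m$ large, where the spacing $\lambda_{m+1}-\lambda_m$ is small, might compete — but the monotonicity of $h$, i.e.\ the elementary logarithmic inequality above, rules this out; the rest is bookkeeping with the definitions.
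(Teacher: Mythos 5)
Your proposal is correct and follows the same underlying idea as the paper's proof, namely that the worst case of condition \eqref{Cdef} occurs at the pair $(n,m)=(1,0)$ and that the resulting constant $\bigl(\sqrt{s}\log s\bigr)^{-1}$ with $s=1+1/\alpha$ is largest at $\alpha=1$, where Lemma 1 gives $(\sqrt2\log2)^{-1}$. The paper merely asserts "the constant $C$ is the greatest for $\alpha=1$" without justification, whereas you actually verify the two monotonicity facts (in $t$ and in $y$, via the elementary inequality $\log(1+w)>\tfrac{2w}{2+w}$) needed to make that assertion rigorous, so your argument is a complete and correct filling-in of the paper's one-line proof.
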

\begin{proof}
  The constant $C$ is the greatest for $\alpha=1$, in which case it follows from Lemma 1.
\end{proof}

\begin{rem}
 It is of course true that the Hurwitz zeta-function proper $ \zeta (s,\alpha) \alpha^s$ belongs to the Hardy class $H^2(\log(n+\alpha)-\log \alpha,\sigma)$ for $\sigma>1/2$. However its properties on its abscissa of convergence are deeper than in any half plane $\Re(s) \geq 1+\xi>1$, and we will find use of some variation of this Example later.
\end{rem}

\subsection{Properties of our extended Hardy classes}

\subsubsection{Comparison to the classical $H^2$ class}

Although our extended Hardy classes are more general than the classical Hardy class, they are likely less interesting in general. This compares for example with the relationship with the Hurwitz zeta function and the Riemann zeta function. While the Hurwitz zeta function is a proper generalization of the Riemann zeta function, it is nevertheless less interesting. In fact, since the Hurwitz zeta-function $\zeta(s,\alpha)$ has no Euler product, it lacks the arithmetic content of the Riemann zeta-function and is thus not as important for number theory. For rational arguments, while it does not have an Euler product unless $\alpha=1/2,1$ it does retain arithmetic properties however, since then it can be written as a linear combination of Dirichlet $L$-functions. 

One interesting part of the theory of the Hurwitz zeta function, is that since it is similar to the Riemann zeta function in many ways,  and in fact specializes to the Riemann zeta-function for $\alpha=1$, its theory can improve our understanding of the differences between the arithmetic and non-arithmetic case, and thus lead to a better understanding of the Riemann zeta function itself. 

Similarly, although our extended Hardy class in general lacks important properties of the Hardy classes $H^2$ and $H^2(T)$, such as closure under multiplication of a Dirichlet polynomial,  and therefore lack the arithmetic properties of the classical Hardy classes,   the study of our generalized Hardy classes can hopefully lead to a better understanding of the Hardy class $H^2$ of Dirichlet series itself. Other examples of generalized Hardy classes where some of these arithmetical properties remains are when the integers in $H^2$ are replaced by Beurling generalized integers \cite{Beurling}. We have not studied this case further.

\subsubsection{Linearity}

We will now state some results that will give a connection between different cases of extended Hardy classes of Dirichlet series

\begin{thm}(Linearity)
 Suppose $L(s)$ belong to the Hardy class  $H^2(\lambda_n,\sigma)$, and that $a>0$. Then $L(as)$ belongs to the Hardy class $H^2(a \lambda_n,\sigma/a)$.
\end{thm}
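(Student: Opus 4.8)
The plan is to observe that the substitution $s\mapsto as$ leaves the coefficient sequence untouched and merely rescales the frequencies, so all three defining conditions \eqref{Ldef}--\eqref{norm2} transfer almost verbatim. Writing $L(s)=\sum_{n=0}^\infty a_n e^{-\lambda_n s}$ as in \eqref{Ldef}, we have $L(as)=\sum_{n=0}^\infty a_n e^{-(a\lambda_n)s}$, so $L(as)$ is again a Dirichlet series of the required form \eqref{Ldef}, now with frequencies $\mu_n:=a\lambda_n$ and the unchanged coefficients $a_n$.

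First I would verify the Dirichlet condition \eqref{dircon} for the $\mu_n$: since $a>0$ and $0=\lambda_0<\lambda_1<\lambda_2<\cdots$, multiplying through by $a$ preserves the strict inequalities, so $0=\mu_0<\mu_1<\mu_2<\cdots$. Next, the norm \eqref{norm2} is literally preserved, $\norm{L(as)}_2^2=\sum_{n=0}^\infty|a_n|^2=\norm{L}_2^2<\infty$, so finiteness of the norm is inherited from $L\in H^2(\lambda_n,\sigma)$.

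The only genuine computation is the behaviour of the constant in \eqref{Cdef}. By hypothesis there is $C>0$ with $e^{-\sigma(\lambda_n+\lambda_m)}/\abs{\lambda_n-\lambda_m}\le C$ for all $n\ne m$. For the pair $(\mu_n,\mu_m)$ and the parameter $\sigma/a$ one computes
\[
\frac{e^{-(\sigma/a)(\mu_n+\mu_m)}}{\abs{\mu_n-\mu_m}}
=\frac{e^{-(\sigma/a)\cdot a(\lambda_n+\lambda_m)}}{a\,\abs{\lambda_n-\lambda_m}}
=\frac{1}{a}\cdot\frac{e^{-\sigma(\lambda_n+\lambda_m)}}{\abs{\lambda_n-\lambda_m}}\le\frac{C}{a},
\]
so \eqref{Cdef} holds for $L(as)$ with the constant $C/a>0$. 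Combining the three verifications gives $L(as)\in H^2(a\lambda_n,\sigma/a)$, as claimed.

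I expect no real obstacle here: the statement is essentially a bookkeeping exercise, and the only point requiring a moment's care is tracking how the factor $e^{-\sigma(\lambda_n+\lambda_m)}$ interacts with the simultaneous replacement of $\sigma$ by $\sigma/a$ and $\lambda_n$ by $a\lambda_n$ — the two rescalings cancel in the exponent, leaving only the factor $1/a$ coming from the denominator $\abs{\mu_n-\mu_m}=a\abs{\lambda_n-\lambda_m}$. One may additionally remark that, by the very same computation, the analogue holds for Definition 2: if $L\in H^2(C,\sigma)$ then $L(as)\in H^2(C/a,\sigma/a)$.
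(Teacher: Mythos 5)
Your proof is correct and follows the same route as the paper, which simply declares the result immediate from Definition 1; you have merely written out the bookkeeping, including the correct observation that the constant in \eqref{Cdef} rescales to $C/a$. No issues.
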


\begin{proof}
 This follows directly from Definition 1 and the proof is immediate.
\end{proof}
\begin{thm}
 Suppose $L(s)$ belong to the Hardy class  $H^2(\lambda_n,\sigma_0)$. Then $L(s)$ belongs to the Hardy class $H^2(\lambda_n,\sigma)$ for any $\sigma \geq \sigma_0$.
\end{thm}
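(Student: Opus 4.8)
The plan is to observe that the two requirements for membership in $H^2(\lambda_n,\sigma)$ — the finiteness of the norm $\norm{L}_2$ and the existence of a constant $C>0$ validating \eqref{Cdef} — decouple, and that only the second one involves $\sigma$ at all. Since the norm \eqref{norm2} is defined purely in terms of the coefficients $a_n$ and makes no reference to $\sigma$, the hypothesis $L\in H^2(\lambda_n,\sigma_0)$ already delivers $\norm{L}_2<\infty$, and nothing further is needed on that score for any $\sigma$.

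For the constant, I would first record that the Dirichlet condition \eqref{dircon} forces $\lambda_n\geq 0$ for all $n$, hence $\lambda_n+\lambda_m\geq 0$ for every pair $(n,m)$ (in fact $\lambda_n+\lambda_m>0$ whenever $n\neq m$). Then, using that $x\mapsto e^{-x}$ is decreasing, $\sigma\geq\sigma_0$ gives $\sigma(\lambda_n+\lambda_m)\geq\sigma_0(\lambda_n+\lambda_m)$ and therefore $e^{-\sigma(\lambda_n+\lambda_m)}\leq e^{-\sigma_0(\lambda_n+\lambda_m)}$ for all $n,m$. So if $C_0>0$ is a constant witnessing \eqref{Cdef} for $(\lambda_n,\sigma_0)$ — which exists precisely because $L\in H^2(\lambda_n,\sigma_0)$ — then for all $n\neq m$
\begin{gather*}
 \frac{e^{-\sigma(\lambda_n+\lambda_m)}}{\abs{\lambda_n-\lambda_m}} \leq \frac{e^{-\sigma_0(\lambda_n+\lambda_m)}}{\abs{\lambda_n-\lambda_m}} \leq C_0,
\end{gather*}
so the same constant $C=C_0$ works for $(\lambda_n,\sigma)$. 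Combined with $\norm{L}_2<\infty$ this shows $L\in H^2(\lambda_n,\sigma)$.

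There is no real obstacle here: the argument is just the monotonicity of $e^{-x}$ together with the positivity of $\lambda_n+\lambda_m$. The only point that requires a moment's care is the sign of $\lambda_n+\lambda_m$, since if exponents were allowed to be negative, increasing $\sigma$ could enlarge the left-hand side of \eqref{Cdef}; but \eqref{dircon} rules this out, so raising $\sigma$ can only shrink that quotient, and the same $C$ is inherited.
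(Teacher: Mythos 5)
Your argument is correct and is exactly the content behind the paper's one-line proof ("follows directly from Definition 1"): the norm \eqref{norm2} is independent of $\sigma$, and since \eqref{dircon} gives $\lambda_n+\lambda_m\geq 0$, the quotient in \eqref{Cdef} is nonincreasing in $\sigma$, so the witness $C_0$ for $\sigma_0$ also works for every $\sigma\geq\sigma_0$. Nothing further is needed.
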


\begin{proof}
  This follows directly from Definition 1 and the proof is immediate.
\end{proof}

\subsubsection{Locally $L^2$}
One important result that is needed in order to develop our theory is to show that the class $H^2(C,\sigma)$ is in fact a Hardy-class is to show that it is locally $L^2$.
\begin{thm}
 Let $L \in H^2(C,\sigma)$. Then
 \begin{gather*}
     \int_0^D \abs{L(\sigma_1+it)}^2dt \leq \p{D+3 \pi C} \norm{L}_2^2, \qquad (\sigma_1 \geq \sigma).
 \end{gather*}
\end{thm}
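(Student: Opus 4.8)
The plan is to expand $|L(\sigma_1+it)|^2$ as a double sum and integrate term by term over $[0,D]$. Writing $L(\sigma_1+it) = \sum_{n=0}^\infty a_n e^{-\lambda_n \sigma_1} e^{-i\lambda_n t}$, we get
\[
\int_0^D |L(\sigma_1+it)|^2\,dt = \sum_{n,m} a_n \overline{a_m}\, e^{-(\lambda_n+\lambda_m)\sigma_1} \int_0^D e^{-i(\lambda_n-\lambda_m)t}\,dt.
\]
The diagonal terms $n=m$ each contribute $|a_n|^2 e^{-2\lambda_n\sigma_1} D \le D|a_n|^2$, which together sum to at most $D\norm{L}_2^2$. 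For the off-diagonal terms, $\left|\int_0^D e^{-i(\lambda_n-\lambda_m)t}\,dt\right| = \left|\frac{e^{-i(\lambda_n-\lambda_m)D}-1}{\lambda_n-\lambda_m}\right| \le \frac{2}{|\lambda_n-\lambda_m|}$. Since $\sigma_1 \ge \sigma$, hypothesis \eqref{Cdef} gives $e^{-(\lambda_n+\lambda_m)\sigma_1}/|\lambda_n-\lambda_m| \le e^{-(\lambda_n+\lambda_m)\sigma}/|\lambda_n-\lambda_m| \le C$, so each off-diagonal term is bounded by $2C|a_n||\overline{a_m}|$.

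The remaining task is to bound $\sum_{n\neq m} |a_n||a_m|$ by something proportional to $\norm{L}_2^2$ — but of course that sum need not converge, so a cruder pairing is needed. Here I would use the standard Hilbert-type trick: only pairs with $|\lambda_n-\lambda_m|$ small really matter because of the $1/|\lambda_n-\lambda_m|$ decay, and the Dirichlet condition \eqref{dircon} limits how many $\lambda_m$ can cluster near a given $\lambda_n$. More precisely, I expect to split according to the integer $k = \lfloor \lambda_n-\lambda_m\rfloor$ (or a dyadic-type grouping), use $\left|\int_0^D e^{-i(\lambda_n-\lambda_m)t}dt\right| \le \min(D, 2/|\lambda_n-\lambda_m|)$, and combine with $2|a_n||a_m| \le |a_n|^2+|a_m|^2$. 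Summing the tail $\sum_{k\ge 1} 2/k$ does not converge, so one must instead exploit that \eqref{Cdef} already absorbs the exponential: in fact the cleaner route is to write $e^{-(\lambda_n+\lambda_m)\sigma_1} \le e^{-|\lambda_n-\lambda_m|\sigma_1}\cdot(\text{something} \le 1)$ only when one index dominates, which is not automatic.

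Given the shape of the constant $3\pi C$, I expect the intended argument is Montgomery–Vaughan's mean value theorem for Dirichlet series (or the Hilbert inequality $\left|\sum_{n\neq m}\frac{x_n\overline{x_m}}{\lambda_n-\lambda_m}\right| \le \pi \delta^{-1}\sum|x_n|^2$ where $\delta = \min_{n\neq m}|\lambda_n-\lambda_m|$), applied with $x_n = a_n e^{-\lambda_n\sigma_1}$. Then the off-diagonal contribution is at most $2\pi\delta^{-1}\sum |a_n|^2 e^{-2\lambda_n\sigma_1}$, and one still needs $\delta^{-1} e^{-2\lambda_n\sigma_1}$ controlled — which is exactly what \eqref{Cdef} provides (taking $m$ to be the neighbour achieving the minimum gap). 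The main obstacle is therefore bookkeeping: matching the $1/|\lambda_n-\lambda_m|$ weights against \eqref{Cdef} uniformly in all pairs, not just adjacent ones, so that the geometric-type series in the separation converges and yields the clean factor $3\pi C$ rather than a divergent bound. Once the correct grouping is fixed, the estimate is routine.
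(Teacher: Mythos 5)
You have the right skeleton (expand the square, integrate termwise, diagonal gives $D\norm{L}_2^2$) and you correctly guess that Montgomery--Vaughan is the intended tool, which is indeed what the paper uses. But the off-diagonal estimate is never actually carried out, and the version of the inequality you write down does not suffice: a bound of the form $\pi\delta^{-1}\sum_n|x_n|^2$ with a single \emph{global} gap $\delta=\min_{n\neq m}\abs{\lambda_n-\lambda_m}$ is useless here, since under \eqref{Cdef} with $\sigma>0$ the gaps may shrink to zero (e.g.\ $\lambda_n=\log(n+1)$), so $\delta$ can be arbitrarily small or zero. The missing ingredient is the \emph{weighted} Montgomery--Vaughan inequality
\begin{gather*}
\abs{\sum\sum_{m\neq n}\frac{b_n\overline{b_m}}{\lambda_m-\lambda_n}}\leq \frac{3\pi}{2}\sum_n\abs{b_n}^2\xi_n^{-1},\qquad \xi_n=\min_{m\neq n}\abs{\lambda_n-\lambda_m},
\end{gather*}
with a separate minimal gap $\xi_n$ for each frequency. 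This makes the ``grouping over non-adjacent pairs'' that you worry about entirely unnecessary: all the bookkeeping over distant pairs is internal to the inequality, and you only need \eqref{Cdef} for the nearest neighbour of each $n$, which gives $\abs{b_n}^2\xi_n^{-1}=\abs{a_n}^2e^{-2\sigma_1\lambda_n}\xi_n^{-1}\leq C\abs{a_n}^2$ for $b_n=a_ne^{-\sigma_1\lambda_n}$.

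A second, smaller point: you cannot insert $\abs{e^{i(\lambda_m-\lambda_n)D}-1}\leq 2$ inside the double sum and then invoke a Hilbert-type bound, because these inequalities control the signed bilinear form, not the sum of absolute values (which diverges in general). Instead one splits the numerator $e^{i(\lambda_m-\lambda_n)D}-1$ into two separate bilinear forms and applies the weighted inequality twice, once with $b_n=a_ne^{-\sigma_1\lambda_n}$ and once with $b_n=a_ne^{-\sigma_1\lambda_n}e^{-i\lambda_nD}$; the two applications give $2\cdot\frac{3\pi}{2}C\norm{L}_2^2=3\pi C\norm{L}_2^2$, matching the stated constant. (One should also first work with $\sigma_1>\sigma$ to justify the termwise integration and then let $\sigma_1\to\sigma$, as the paper does.) With these two corrections your outline becomes the paper's proof.
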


\begin{proof} The result follows quite easily from the Montgomery-Vaughan inequality in the same way as for classical Dirichlet series. We first prove it for $\sigma_1>\sigma$ and then take the limit $\sigma_1 \to \sigma$.

  By expanding $\abs{L(\sigma_1+it)}^2=L(\sigma_1+it) L(\sigma_1-it)$ as a double sum we see  that
 \begin{gather} \label{aj} 
\int_0^D \abs{L(\sigma_1+it)}^2  dt= \int_0^D \sum_{n,m=0}^\infty a_n a_m  e^{-\sigma_1 (\lambda_n+\lambda_m)} e^{(\lambda_m-\lambda_n)it} dt.
 \end{gather}
By integrating this term wise, the diagonal  terms $n=m$ gives us the contribution
\begin{gather*}
 D \sum_{n=0}^\infty \abs{a_n}^2 e^{-2 \sigma_1 \lambda_n} \leq  D \sum_{n=1}^\infty \abs{a_n}^2=D \norm{L}_2^2.
\end{gather*}
Similarly, we find that the non-diagonal terms in \eqref{aj} gives us
\begin{gather} \label{ajaj2}
 \sum_n \sum_{m \neq n} \frac{a_n \overline{a_m}e^{-\sigma_1 (\lambda_n+\lambda_m)} \p{e^{(\lambda_m-\lambda_n)iD}-1}} {\lambda_m-\lambda_n}.
\end{gather}
The Montgomery-Vaughan inequality \cite{MontVaug} (See also \cite[p. 21]{Ramachandra}, and \cite{Montgomery}) says that
\begin{gather*}
 \abs{\sum \sum_{m \neq n} \frac{b_n \overline{b_m}}{\lambda_m-\lambda_n}} \leq \frac{3 \pi} 2 \sum_n \abs{b_n}^2 \xi_n^{-1},
 \\ \intertext{whenever $\xi_n=\min_{m \neq n} \abs{\lambda_n-\lambda_m}$. By using this theorem twice, first for  $b_n=a_n e^{-\sigma_1 \lambda_n}$ and then for $b_n=a_n e^{-\sigma_1 (\lambda_n+iD)}$  on \eqref{ajaj2} and using  \eqref{Cdef}, we get the non-diagonal contribution}
3 \pi C {\norm L}_2^2 
\end{gather*}
in Theorem 4.
\end{proof}

\begin{rem} The Montgomery-Vaughan inequality is a variant of an inequality of Hilbert  that has found extensive use in analytic number theory. Hedenmalm-Lindqvist-Seip \cite{HedLinSeip}, coming from a different branch of analysis, used a different argument to prove Locally $L^2$ and were unaware of the previous work done by analytic number theorists. See the discussion in \cite{HedLinSeip2}. \end{rem}

 \section{Absolutely convergent Dirichlet series}

\begin{defn} 
  We say that $L(s)$ is absolutely convergent on $\Re(s)=\sigma$  whenever $L(s)$ is defined by \eqref{Ldef}, satisfies the Dirichlet condition \eqref{dircon}, and we have for the  norm $\norm{L}_1$ that
\begin{gather}
  \label{norm3}
   \norm{L}_1=\sum_{n=0}^\infty |a_n| n^{-\sigma} <\infty.
\end{gather}
\end{defn}
\begin{rem}
Each Dirichlet series $L(s) \in H^2(C,\sigma_0)$ is absolutely convergent Dirichlet series on $\Re(s)=\sigma$ for $\sigma>\sigma_0$.  Theorem 9  will give some (rather weak) relationship between the norms.
\end{rem}

 Many corresponding properties for the absolutely convergent Dirichlet series compared to the $H^2$ class are easier to prove.
Similar to the $H^2$ classes 
we have some simple natural examples. 
\begin{example}
   The Dirichlet series $\zeta^{(-z)}(s,\alpha) \alpha^s$ for $\Re(z)>1$, where $\zeta^{(-z)}(s,\alpha)$ is defined by Example 4  is absolutely convergent on $\Re(s)=1$.
\end{example}

\section{Half plane of convergence}
By the Cauchy-Schwarz inequality it is clear that any Dirichlet series in $H^2(C,\sigma)$ is absolutely convergent for $\Re(s)=\sigma_1$ for any $\sigma_1>\sigma$. We will be interested in making this observation quantitative, by relating the different norms in this region. 
\subsection{Shifted Dirichlet series}
For convenience we define the shifted Dirichlet series.

\begin{defn}
  Let $L(s)$ be any Dirichlet series. We define
  \begin{gather*} 
    L_x(s)=L(s+x).
  \end{gather*}
\end{defn}
\noindent The following result is immediate
\begin{thm}
 We have that the norms  $\norm{L_x}_1$ and $\norm{L_x}_2$ are decreasing
as functions of $x$ whenever they are well defined.
\end{thm}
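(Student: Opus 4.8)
The plan is to reduce everything to the elementary fact that a translation of the argument multiplies each coefficient of the Dirichlet series by a factor that is decreasing in $x$. Indeed, from \eqref{Ldef},
\begin{gather*}
  L_x(s) = L(s+x) = \sum_{n=0}^\infty a_n e^{-\lambda_n(s+x)} = \sum_{n=0}^\infty \p{a_n e^{-\lambda_n x}} e^{-\lambda_n s},
\end{gather*}
so the coefficient sequence of $L_x$ is $\p{a_n e^{-\lambda_n x}}_{n\geq 0}$, and in particular $\abs{a_n e^{-\lambda_n x}} = \abs{a_n}\, e^{-\lambda_n x}$.

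First I would record that, by the Dirichlet condition \eqref{dircon}, $\lambda_n \geq 0$ for every $n$, so for each fixed $n$ the map $x \mapsto e^{-\lambda_n x}$ is non-increasing on $\R$ (constant when $n=0$, strictly decreasing otherwise). Next I would write out the two norms of $L_x$ in terms of its coefficients: by \eqref{norm2},
\begin{gather*}
  \norm{L_x}_2^2 = \sum_{n=0}^\infty \abs{a_n}^2 e^{-2\lambda_n x},
\end{gather*}
while, viewing $L_x$ as an absolutely convergent Dirichlet series on a fixed line $\Re(s)=\sigma$, \eqref{norm3} gives
\begin{gather*}
  \norm{L_x}_1 = \sum_{n=0}^\infty \abs{a_n}\, e^{-\lambda_n x}\, w_n , \qquad w_n \geq 0,
\end{gather*}
where $w_n$ is the fixed weight attached to the line (namely $n^{-\sigma}$, resp.\ $e^{-\lambda_n\sigma}$, in the notation of \eqref{norm3}). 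In either display every summand is a non-increasing function of $x$, hence so is the sum; therefore $x \mapsto \norm{L_x}_2^2$ and $x \mapsto \norm{L_x}_1$ are non-increasing, and the same then holds for $\norm{L_x}_2$ and $\norm{L_x}_1$ themselves.

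Finally I would address the clause ``whenever they are well defined''. If $\norm{L_{x_0}}_2 < \infty$, then the term-by-term bound $\abs{a_n}^2 e^{-2\lambda_n x} \leq \abs{a_n}^2 e^{-2\lambda_n x_0}$, valid for $x \geq x_0$, shows $\norm{L_x}_2 \leq \norm{L_{x_0}}_2 < \infty$; thus the set of $x$ on which the norm is finite is a half-line $[x_0,\infty)$ (or all of $\R$), and the monotonicity assertion is to be read on that set, and likewise for $\norm{\cdot}_1$. I do not expect any genuine obstacle here — the result is, as the paper says, immediate; the only points worth a word are that ``decreasing'' must be understood in the non-strict sense because the term $n=0$ (where $\lambda_0=0$) contributes a constant, and that the natural domain of monotonicity is the interval of finiteness just described.
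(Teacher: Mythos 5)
Your proof is correct and is precisely the ``immediate'' argument the paper has in mind (the paper in fact gives no proof at all, simply declaring the result immediate): the shift multiplies each coefficient by $e^{-\lambda_n x}$ with $\lambda_n \geq 0$, so every term of either norm is non-increasing in $x$. Your extra remarks on the non-strict sense of ``decreasing'' (because of the $\lambda_0=0$ term) and on the domain of finiteness being a half-line are sensible and harmless additions.
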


\subsection{Quantitative estimates in the half plane of convergence}

For absolutely convergent Dirichlet series we have the following inequality. 
\begin{thm}
 Let $L(s)$ be an absolutely convergent Dirichlet series on $\Re(s)=\sigma$.  Then we have that $L_x$ is  absolutely convergent on $\Re(s)=\sigma$ for any $x>0$ and we have that 
\begin{gather*}
  \norm{L_x-a_0}_1 \leq e^{-\lambda_1 x}  \norm{L-a_0}_1. 
\end{gather*}
\end{thm}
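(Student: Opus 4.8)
The plan is to work directly from the definition of the $\norm{\cdot}_1$ norm in \eqref{norm3} and the Dirichlet condition \eqref{dircon}. Write $L(s) = a_0 + \sum_{n=1}^\infty a_n e^{-\lambda_n s}$, so that $L_x(s) = a_0 + \sum_{n=1}^\infty a_n e^{-\lambda_n x} e^{-\lambda_n s}$. The coefficient of the $n$-th term of $L_x$ is thus $a_n e^{-\lambda_n x}$. Subtracting the constant term removes the $n=0$ contribution, so both $\norm{L_x - a_0}_1$ and $\norm{L - a_0}_1$ are sums over $n \geq 1$ only; this is exactly why the statement is phrased with $a_0$ subtracted rather than for $L$ itself (the $n=0$ term has $\lambda_0 = 0$ and would not decay). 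First I would record that, by definition of absolute convergence on $\Re(s) = \sigma$ and the fact that $e^{-\lambda_n x} \leq 1$ for $x > 0$ and $\lambda_n \geq 0$, the series for $\norm{L_x}_1$ is dominated termwise by that for $\norm{L}_1$, so $L_x$ is indeed absolutely convergent on $\Re(s)=\sigma$; this is really just a special case of Theorem 8 applied at the point $x$, but it is worth stating explicitly here.

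The heart of the argument is the termwise bound. For each $n \geq 1$, the Dirichlet condition \eqref{dircon} gives $\lambda_n \geq \lambda_1$, hence $e^{-\lambda_n x} = e^{-\lambda_1 x} e^{-(\lambda_n - \lambda_1)x} \leq e^{-\lambda_1 x}$ since $\lambda_n - \lambda_1 \geq 0$ and $x > 0$. Therefore
\begin{gather*}
 \norm{L_x - a_0}_1 = \sum_{n=1}^\infty \abs{a_n} e^{-\lambda_n x} n^{-\sigma} \leq e^{-\lambda_1 x} \sum_{n=1}^\infty \abs{a_n} n^{-\sigma} = e^{-\lambda_1 x} \norm{L - a_0}_1,
\end{gather*}
which is the claimed inequality. (Here I am reading the weight $n^{-\sigma}$ in \eqref{norm3} as the appropriate weight attached to the $n$-th frequency $\lambda_n$; in the pure Dirichlet-series normalization $\lambda_n = \log(n+1)$ this is literally $n^{-\sigma}$ up to the indexing shift, and nothing in the estimate depends on its precise form beyond its being a fixed nonnegative weight independent of $x$.)

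There is no real obstacle here: the only thing to be careful about is the bookkeeping of the constant term and the indexing, i.e. making sure the sum defining $\norm{L_x - a_0}_1$ genuinely starts at $n=1$ so that the uniform factor $e^{-\lambda_1 x}$ can be pulled out. If one wanted $\norm{L_x}_1 \leq e^{-\lambda_1 x}\norm{L}_1$ without subtracting $a_0$ it would simply be false whenever $a_0 \neq 0$, since the $n=0$ term is unchanged by the shift; so the subtraction is essential and the proof as written handles exactly that. The argument is elementary and self-contained, relying only on monotonicity of $e^{-\lambda_n x}$ in $n$ (via \eqref{dircon}) and in $x$.
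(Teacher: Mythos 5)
Your proof is correct and takes essentially the same route as the paper's: both bound the shifted series termwise by pulling out the factor $e^{-\lambda_1 x}$ from $e^{-\lambda_n x}$ using $\lambda_1 \leq \lambda_n$ for $n \geq 1$, with the $n=0$ term removed by the subtraction of $a_0$. Your parenthetical reading of the weight in \eqref{norm3} as $e^{-\lambda_n\sigma}$ rather than the literal $n^{-\sigma}$ is the right one, and is exactly what the paper's own proof uses.
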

\begin{proof}
 Since $\lambda_1 \leq  \lambda_n$  for $n \geq 1$ we have that
\begin{align*}
  \norm{L_x-a_0}_1 &= \sum_{n=1}^\infty e^{-\lambda_n (\sigma+x)} \abs{a_n}, \\ &\leq 
   \sum_{n=1}^\infty e^{-\lambda_1 x}  e^{-\lambda_n  \sigma}  \abs{a_n}, \\ 
   &= e^{-\lambda_1 x}  \norm{L-a_0}_1.
  \end{align*} 
\end{proof}
We will prove a similar result for the  $H^2(C,\sigma)$ class of Dirichlet series. First we prove three lemmas.
\begin{lem}
   Let $\lambda_n$  fulfill \eqref{Cdef}. Then
  \begin{gather*}
    \lambda_1 \geq \begin{cases}  \frac{W(\sigma/C)} {\sigma}, & \sigma>0, \\ \frac 1 C, & \sigma=0, \end{cases}
  \end{gather*}
  where $W(x)$ denote the Lambert $W$-function. 
\end{lem}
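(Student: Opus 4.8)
The plan is to extract a single, well-chosen instance of the hypothesis \eqref{Cdef} and then invert the resulting transcendental inequality using the Lambert $W$-function. The natural choice is the pair $n=1$, $m=0$: since $\lambda_0=0$ by the Dirichlet condition \eqref{dircon}, inequality \eqref{Cdef} reads
\begin{gather*}
 \frac{e^{-\sigma \lambda_1}}{\lambda_1} \leq C,
\end{gather*}
equivalently $\lambda_1 e^{\sigma \lambda_1} \geq 1/C$. Note $\lambda_1>0$ by \eqref{dircon}, so all quantities below are positive and the manipulations are legitimate.

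\textbf{Key steps.} First, dispose of the case $\sigma=0$: the displayed inequality becomes $\lambda_1 \geq 1/C$ directly, which is the claimed bound. Second, for $\sigma>0$ substitute $u=\sigma\lambda_1>0$, so that $\lambda_1 e^{\sigma\lambda_1}\geq 1/C$ becomes $u e^{u}\geq \sigma/C$. Third, recall that the (principal branch of the) Lambert $W$-function is the inverse of the strictly increasing bijection $u\mapsto ue^{u}$ on $[0,\infty)$, and is itself strictly increasing there. Applying $W$ to both sides of $ue^{u}\geq \sigma/C$ and using $W(ue^{u})=u$ for $u\geq 0$ gives $u\geq W(\sigma/C)$, i.e.\ $\sigma\lambda_1\geq W(\sigma/C)$, hence $\lambda_1\geq W(\sigma/C)/\sigma$, as required.

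\textbf{Main obstacle.} There is essentially no analytic difficulty here; the only point that needs care is the bookkeeping around the Lambert $W$-function, namely that we are on the branch where $u\mapsto ue^{u}$ is invertible and monotone (guaranteed by $u=\sigma\lambda_1>0$), so that the inequality survives inversion. Once the correct instance $n=1,m=0$ of \eqref{Cdef} is isolated, the rest is a one-line change of variables.
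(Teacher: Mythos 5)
Your proof is correct and follows essentially the same route as the paper: both isolate the instance $n=1$, $m=0$ of \eqref{Cdef} (so that $e^{-\sigma\lambda_1}\leq C\lambda_1$) and invert via the Lambert $W$-function, with your write-up merely making the monotonicity of $u\mapsto ue^{u}$ explicit where the paper treats it as clear.
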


\begin{proof}
  From \eqref{Cdef} it is clear that $\lambda_1 \geq \lambda$ when $\lambda$ is the solution to
\begin{gather*}
  e^{-\sigma \lambda}=C \lambda.
\end{gather*}
By the definition of the Lambert $W$-function this equation has the solution
\begin{gather*} 
  \lambda=\frac{W(\sigma/C)} {\sigma},
\end{gather*}
 when $\sigma>0$. If $\sigma=0$ we have the equation $1=C \lambda$ and it follows that $\lambda=1/C$.
  \end{proof}

\begin{lem}
  Let $\lambda_n$  fulfill \eqref{Cdef}. Then
  \begin{gather*}
    \sum_{n=1}^\infty e^{-2(\sigma+x) \lambda_n}  \leq \p{1+\frac {C}{2x}}  e^{-2 \lambda_1 x}, \qquad (x>0).
  \end{gather*}
\end{lem}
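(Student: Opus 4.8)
The plan is to pull out the claimed exponential factor $e^{-2\lambda_1 x}$ at the very start and then reduce everything to a one‑dimensional integral comparison, taking care that the two pieces of the bound, the constant $1$ and the term $\tfrac{C}{2x}$, come from two cleanly separated sources. Writing
\[
  \sum_{n=1}^\infty e^{-2(\sigma+x)\lambda_n}
  = e^{-2\lambda_1 x}\sum_{n=1}^\infty e^{-2\sigma\lambda_n}\,e^{-2x(\lambda_n-\lambda_1)},
\]
it suffices to show that the remaining series is at most $1+\tfrac{C}{2x}$. I would treat the $n=1$ term on its own: it equals $e^{-2\sigma\lambda_1}$, and since $\sigma\geq 0$ and $\lambda_1>0$ by \eqref{dircon}, this is $\leq 1$. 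This single term is what will produce the constant $1$.

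For the tail $n\geq 2$ the key input is \eqref{Cdef} applied to the consecutive pair $(n,n-1)$. Because $\lambda_{n-1}<\lambda_n$, it gives
\[
  \lambda_n-\lambda_{n-1}\;\geq\; C^{-1}e^{-\sigma(\lambda_n+\lambda_{n-1})}\;\geq\; C^{-1}e^{-2\sigma\lambda_n},
\]
hence $e^{-2\sigma\lambda_n}\leq C(\lambda_n-\lambda_{n-1})$. Substituting this into each tail term bounds it by $C(\lambda_n-\lambda_{n-1})e^{-2x(\lambda_n-\lambda_1)}$, and since $t\mapsto e^{-2x(t-\lambda_1)}$ is decreasing, on the interval $[\lambda_{n-1},\lambda_n]$ its value at the right endpoint is smallest, so $(\lambda_n-\lambda_{n-1})e^{-2x(\lambda_n-\lambda_1)}\leq\int_{\lambda_{n-1}}^{\lambda_n}e^{-2x(t-\lambda_1)}\,dt$. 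Summing over $n\geq 2$ the intervals telescope into $[\lambda_1,\infty)$ (or a subinterval of it if the $\lambda_n$ were bounded), giving
\[
  \sum_{n=2}^\infty e^{-2\sigma\lambda_n}e^{-2x(\lambda_n-\lambda_1)}
  \;\leq\; C\int_{\lambda_1}^\infty e^{-2x(t-\lambda_1)}\,dt
  \;=\;\frac{C}{2x}.
\]

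Adding the $n=1$ contribution and the tail yields $\sum_{n\geq1}e^{-2\sigma\lambda_n}e^{-2x(\lambda_n-\lambda_1)}\leq 1+\tfrac{C}{2x}$, which is the assertion once the factor $e^{-2\lambda_1 x}$ is restored; the case $\sigma=0$ goes through verbatim, since there \eqref{Cdef} simply says consecutive gaps are at least $1/C$. I do not expect a genuine obstacle here: the computation is routine, and the only point requiring a little care is the bookkeeping that isolates the $n=1$ term (so as not to fold it into the integral, where $\int_0^{\lambda_1}e^{-2x(t-\lambda_1)}\,dt$ would blow up) and thereby keeps the constant sharp as stated.
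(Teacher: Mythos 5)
Your proof is correct and follows essentially the same route as the paper: pull out $e^{-2\lambda_1 x}$, bound the $n=1$ term by $1$, use monotonicity of $\lambda_n$ together with \eqref{Cdef} to replace $e^{-2\sigma\lambda_n}$ by $C(\lambda_n-\lambda_{n-1})$ for $n\ge 2$, and dominate the resulting sum by $C\int_{\lambda_1}^{\infty}e^{-2x(t-\lambda_1)}\,dt=\frac{C}{2x}$ as a lower Riemann sum. The paper carries out the identical comparison after the substitution $a_n=C(\lambda_n-\lambda_{n-1})$, which differs from your version only by a change of variables.
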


\begin{proof}
 We have that
 \begin{align*}
 \sum_{n=1}^\infty e^{-2(\sigma+x) \lambda_n} &= e^{-2 \lambda_1 x} \sum_{n=1}^\infty e^{-2 x (\lambda_n-\lambda_1)} e^{-2\sigma \lambda_n}, \\ &\leq     e^{-2 \lambda_1 x} \sum_{n=1}^\infty e^{-2 x (\lambda_{n}-\lambda_{1})} e^{-\sigma (\lambda_{n}+\lambda_{n-1})},
\end{align*}
since $\lambda_n$ is an increasing sequence. By using the inequality \eqref{Cdef} for $n\geq 2$ this can be estimated by
\begin{gather*}
    e^{-2 x \lambda_1}\p{1+ \sum_{n=2}^\infty C (\lambda_{n}-\lambda_{n-1}) e^{-2 x (\lambda_n-\lambda_1)}},
\end{gather*}
Now let \begin{gather}
   a_n=C(\lambda_{n}-\lambda_{n-1}), \qquad \qquad (n \geq 2).
 \end{gather}
We get that
\begin{gather} \label{UI}
1+\sum_{n=2}^\infty C (\lambda_{n}-\lambda_{n-1}) e^{-2 x \lambda_n} = 1+\sum_{n=2}^\infty a_n e^{- 2x/C \sum_{k=2}^n a_k}.
\end{gather}
The right most sum is a lower Riemann sum for the integral
\begin{gather*}
  \int_0^\infty e^{-2xt/C} dt =\frac{C}{2x},
\end{gather*}
from which the lower bound in the Lemma follows.
\end{proof}

\begin{lem}
  Let $L \in H^2(\lambda_n, \sigma)$. Then for $x>0$ we have that
  \begin{gather*}
    \norm{L_x-a_0} \leq  \sqrt{1+\frac{C}{2x}} \, \norm{L-a_0}_2 e^{- \lambda_1 x}.
  \end{gather*}
\end{lem}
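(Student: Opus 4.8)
The plan is to reduce the estimate to Lemma 3 by a single application of the Cauchy--Schwarz inequality; no new analytic input is needed. Writing $L(s)=\sum_{n=0}^\infty a_n e^{-\lambda_n s}$, the shift is $L_x(s)=L(s+x)=\sum_{n=0}^\infty a_n e^{-\lambda_n x} e^{-\lambda_n s}$, so $L_x-a_0$ has Dirichlet coefficients $a_n e^{-\lambda_n x}$ for $n\geq 1$, and the norm on the left (which here, as in Definition 4 and in analogy with Theorem 8, is the absolutely convergent norm $\norm{\cdot}_1$ measured on $\Re(s)=\sigma$, the same series also dominating $\sup_t \abs{L_x(\sigma+it)-a_0}$) is
\[
  \norm{L_x-a_0}_1 = \sum_{n=1}^\infty \abs{a_n}\, e^{-\lambda_n x}\, e^{-\sigma \lambda_n} = \sum_{n=1}^\infty \abs{a_n}\, e^{-(\sigma+x)\lambda_n}.
\]

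First I would apply Cauchy--Schwarz to the factorisation $\abs{a_n}\cdot e^{-(\sigma+x)\lambda_n}$, which yields
\[
  \norm{L_x-a_0}_1 \leq \p{\sum_{n=1}^\infty \abs{a_n}^2}^{1/2}\p{\sum_{n=1}^\infty e^{-2(\sigma+x)\lambda_n}}^{1/2}.
\]
The first factor equals $\norm{L-a_0}_2$ by \eqref{norm2}, and is finite precisely because $L\in H^2(\lambda_n,\sigma)$. For the second factor I would quote Lemma 3, which gives $\sum_{n=1}^\infty e^{-2(\sigma+x)\lambda_n}\leq \p{1+\tfrac{C}{2x}}e^{-2\lambda_1 x}$; taking square roots produces $\sqrt{1+C/(2x)}\, e^{-\lambda_1 x}$. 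Multiplying the two bounds gives exactly the asserted inequality, and incidentally shows $\norm{L_x-a_0}_1<\infty$, i.e. that $L_x$ is absolutely convergent on $\Re(s)=\sigma$.

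I do not expect a genuine obstacle here: all the analytic work sits in Lemma 3 (where the tail sum is recognised as a lower Riemann sum for $\int_0^\infty e^{-2xt/C}\,dt=C/(2x)$), while the present lemma merely packages that estimate together with Cauchy--Schwarz. The only point requiring care is the bookkeeping of the shift and of the weight: one must check that passing from $L$ to $L_x$ multiplies the $n$-th coefficient by $e^{-\lambda_n x}$ and that the $\norm{\cdot}_1$-weight is the $e^{-\sigma\lambda_n}$ attached to the abscissa $\Re(s)=\sigma$, so that the exponents combine to $-(\sigma+x)\lambda_n$ and line up exactly with the left-hand side of Lemma 3. Note that Lemma 2 is not used in this argument; it presumably enters only in a later, more explicit form of the estimate, where the factor $e^{-\lambda_1 x}$ is made quantitative by bounding $\lambda_1$ from below in terms of $C$ and $\sigma$ via the Lambert $W$-function.
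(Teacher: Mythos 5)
Your proof is correct and is essentially the paper's own argument: pass to the coefficient sum via the triangle inequality, apply Cauchy--Schwarz, and then invoke the tail-sum estimate $\sum_{n\geq 1} e^{-2(\sigma+x)\lambda_n}\leq \left(1+\tfrac{C}{2x}\right)e^{-2\lambda_1 x}$. The only slip is in the cross-referencing: that tail-sum estimate is Lemma 4 in the paper (the Lambert-$W$ lower bound for $\lambda_1$ is Lemma 3), but since you state its content explicitly nothing mathematical is affected.
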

\begin{proof}
  This follows  by the triangle inequality,
\begin{align*}
 \norm{L_x-a_0} &=\abs{\sum_{n=1}^\infty e^{-\lambda_n(\sigma+x)}}, \\ 
                &\leq \sum_{n=1}^\infty \abs{a_n} e^{-\lambda_n(\sigma+x)}, \\ 
   \\ \intertext{the Cauchy-Schwarz inequality} &\leq 
 \sqrt{\sum_{n=1}^\infty \abs{a_n}^2   \sum_{n=1}^\infty e^{-2 \lambda_n(\sigma+x)}}, \\ \intertext{and Lemma 4}
       &\leq \norm{L-a_0}_2  \sqrt{1+\frac{C}{2x}} e^{- \lambda_1 x}.
\end{align*}
\end{proof}

\begin{thm}
 Let $L \in H^2(C,\sigma) $. Then we have that $L_x$ is  absolutely convergent on $\Re(s)=\sigma$ for any $x>0$ and we have that 
\begin{gather*}
  \norm{L_x-a_0}_1 \leq \sqrt{1+\frac{C}{2x}} \,   \norm{L-a_0}_2 e^{-Kx}, \intertext{where}
  K= \begin{cases}  \frac{W(\sigma/C)} {\sigma}, & \sigma>0, \\ \frac 1 C, & \sigma=0, \end{cases}
\end{gather*}
where $W(x)$ denote the Lambert $W$-function.
\end{thm}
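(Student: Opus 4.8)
The plan is to obtain this theorem as an essentially immediate consequence of the three lemmas just established: it is nothing more than the estimate of the preceding lemma (Lemma~5) with the exponent $\lambda_1$ replaced by the explicit lower bound $K$ supplied by Lemma~3. First I would record what the hypothesis unpacks to: $L\in H^2(C,\sigma)$ in the sense of Definition~2 means precisely that $L$ satisfies \eqref{Ldef}--\eqref{norm2} and that the exponents $\lambda_n$ obey \eqref{Cdef} with this particular constant~$C$. In particular the $\lambda_n$ fulfill \eqref{Cdef}, so Lemma~3 and Lemma~4 apply with this $C$; and since the same $C$ also witnesses $L\in H^2(\lambda_n,\sigma)$ in the sense of Definition~1, the preceding lemma applies as well. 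No new hypotheses are needed.

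Next, for the absolute convergence assertion: the shifted series is $L_x(s)=L(s+x)=\sum_{n\ge 0}\p{a_n e^{-\lambda_n x}}e^{-\lambda_n s}$, so in the notation of \eqref{norm3} its $\norm{\cdot}_1$-norm on $\Re(s)=\sigma$ equals $\sum_{n\ge 0}\abs{a_n}e^{-\lambda_n(\sigma+x)}$. By the Cauchy--Schwarz inequality this is at most $\abs{a_0}+\norm{L-a_0}_2\,\p{\sum_{n\ge 1}e^{-2\lambda_n(\sigma+x)}}^{1/2}$, and the last sum is finite (indeed bounded) by Lemma~4, so $L_x$ is absolutely convergent on $\Re(s)=\sigma$ for every $x>0$. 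For the quantitative bound, I would apply the preceding lemma to get $\norm{L_x-a_0}_1\le \sqrt{1+C/(2x)}\;\norm{L-a_0}_2\,e^{-\lambda_1 x}$, and then invoke Lemma~3, which gives $\lambda_1\ge K$ with $K$ defined by exactly the two-case formula in the statement. Since $x>0$, monotonicity of $t\mapsto e^{-tx}$ turns $\lambda_1\ge K$ into $e^{-\lambda_1 x}\le e^{-Kx}$, and this yields the claimed inequality; the case split on the sign of $\sigma$ is inherited verbatim from Lemma~3 and needs no separate treatment.

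I do not expect a genuine obstacle, since all the analytic content already lives in Lemmas~3, 4 and~5 and the proof is simply a matter of assembling them. The only points worth a moment's care are the bookkeeping: that the $n=0$ term (where $\lambda_0=0$ by the Dirichlet condition \eqref{dircon}) is consistently removed from every sum, so that $\lambda_1$ is truly the smallest exponent appearing, and that $K>0$ strictly, so that the bound is an honest decay estimate --- the latter because $W(\sigma/C)>0$ for $\sigma,C>0$ and $1/C>0$. One could instead keep $\lambda_1$ itself throughout, as in the earlier theorem for absolutely convergent Dirichlet series, for a sharper constant; but the purpose of the present formulation is precisely to have a bound depending only on $C$ and $\sigma$.
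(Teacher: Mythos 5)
Your proposal is correct and matches the paper's own argument, which simply cites Lemma~3 and Lemma~5: the quantitative bound is Lemma~5 with $e^{-\lambda_1 x}\le e^{-Kx}$ supplied by Lemma~3, and the absolute convergence is the Cauchy--Schwarz/Lemma~4 step already embedded in Lemma~5. Your additional bookkeeping about which definition of the Hardy class is in force is a reasonable clarification but does not change the route.
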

\begin{proof} This follows from Lemma 3 and Lemma 5. \end{proof}

\subsection{Some sharper results in the case of Classical Dirichlet series}
 Theorems 6 and 7 have sharper variants in the case of classical Dirichlet series since then we know that $\lambda_1=\log 2$. For absolutely convergent classical Dirichlet series we have the following theorem.
\begin{thm}
 Let $L(s)$ be an absolutely convergent classical Dirichlet series on $\Re(s)=\sigma$.  Then we have that $L_x$ is  absolutely convergent on $\Re(s)=\sigma$ for any $x>0$ and we have that
\begin{gather*}
  \norm{L_x-a_0}_1 \leq 2^{-x}  \norm{L-a_0}_1. 
\end{gather*}
\end{thm}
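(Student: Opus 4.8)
The plan is to recognize that Theorem~8 is just the specialization of Theorem~6 to the classical situation, where the exponent sequence is forced to be $\lambda_n = \log(n+1)$, $n \geq 0$. First I would set up the dictionary: a classical Dirichlet series $\sum_{n=1}^{\infty} a_n n^{-s}$ is, after the reindexing used in Example~1, of the form \eqref{Ldef} with $\lambda_0 = 0$, $\lambda_1 = \log 2$, $\lambda_2 = \log 3$, and so on; in particular the Dirichlet condition \eqref{dircon} holds and one has $\lambda_1 = \log 2$. Moreover the classical notion of absolute convergence on $\Re(s) = \sigma$, namely $\sum_n \abs{a_n} n^{-\sigma} < \infty$, is exactly \eqref{norm3} for this sequence, so Definition~3 applies verbatim.

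With this in hand I would simply invoke Theorem~6: since $L$ is an absolutely convergent Dirichlet series on $\Re(s) = \sigma$ with $\lambda_1 = \log 2$, Theorem~6 gives that $L_x$ is absolutely convergent on $\Re(s) = \sigma$ for every $x > 0$ and
\[
  \norm{L_x - a_0}_1 \leq e^{-\lambda_1 x}\,\norm{L - a_0}_1 = 2^{-x}\,\norm{L - a_0}_1 ,
\]
using $e^{-x \log 2} = 2^{-x}$; this is precisely the assertion. Equivalently one can rerun the one-line estimate from the proof of Theorem~6 directly: for $n \geq 1$ we have $\lambda_n \geq \log 2$, so $e^{-\lambda_n(\sigma + x)}\abs{a_n} \leq 2^{-x}\, e^{-\lambda_n \sigma}\abs{a_n}$, and summing over $n \geq 1$ yields $\norm{L_x - a_0}_1 \leq 2^{-x}\,\norm{L - a_0}_1$.

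The argument is therefore routine and there is no genuine obstacle; the only point that deserves a moment's care is the indexing convention. In the classical normalization the ``constant'' term is $a_1 \cdot 1^{-s}$, which under the translation of Example~1 is the $n = 0$ term (with $\lambda_0 = 0$) that is subtracted on both sides of the inequality, so ``$\norm{L - a_0}_1$'' above is $\sum_{n \geq 2} \abs{a_n} n^{-\sigma}$ in untranslated classical notation. Once conventions are aligned, the improved constant $2^{-x}$ is nothing but $e^{-\lambda_1 x}$ evaluated at the value $\lambda_1 = \log 2$ that is forced for classical Dirichlet series, and nothing more is needed.
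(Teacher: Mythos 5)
Your proof is correct and matches the paper's own argument, which likewise just invokes Theorem~6 with $\lambda_1=\log 2$ (so that $e^{-\lambda_1 x}=2^{-x}$) for classical Dirichlet series. The extra care you take with the indexing convention is sensible but does not change the substance.
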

\begin{proof} This follows from Theorem 6. \end{proof}
For the Hardy class $L \in H^2(\log(n+1),\sigma)$ which by Example 2 coincide with the classical $H^2$ Hardy class of Dirichlet series when $\sigma=1/2$ we have the following theorem. 
\begin{thm}
 Let $L \in H^2(\log(n+1),\sigma)$. Then we have that $L_x$ is  absolutely convergent on $\Re(s)=\sigma$ for any $x>0$ and we have that 
\begin{gather*}
  \norm{L_x-a_0}_1 \leq  2^{-x}\sqrt{1+\frac{1}{x\sqrt 8 \log 2}} \,  \norm{L-a_0}_2.
\end{gather*}
\end{thm}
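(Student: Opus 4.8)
The plan is to deduce this theorem from Lemma 5, in exactly the way Theorem 7 was deduced from it, but exploiting that in the classical case the first frequency is known exactly rather than only bounded below. For $L\in H^2(\log(n+1),\sigma)$ the frequencies are $\lambda_n=\log(n+1)$, so $\lambda_1=\log 2$; replacing the Lambert-$W$ lower bound of Lemma 3 by this equality is precisely the ``sharpening'' promised by the section heading, and the constant in \eqref{Cdef} is then supplied by Lemma 1.

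Concretely, I would proceed as follows. First, record that $\lambda_n=\log(n+1)$ satisfies the Dirichlet condition \eqref{dircon} with $\lambda_1=\log 2$. Second, observe that one may take $C=(\sqrt 2\log 2)^{-1}$ in \eqref{Cdef}: for $\sigma\ge\tfrac12$ one has $((n+1)(m+1))^{-\sigma}\le((n+1)(m+1))^{-1/2}$, so the left-hand side of \eqref{Cdef} is bounded by its value at $\sigma=\tfrac12$, which by the computation in the proof of Lemma 1 is at most $(\sqrt 2\log 2)^{-1}$ (the maximum being attained at the consecutive pair $n=1$, $m=0$). Third, feed $\lambda_1=\log 2$ and this $C$ into Lemma 5 to get
\[
 \norm{L_x-a_0}_1\;\le\;\sqrt{1+\frac{C}{2x}}\;\norm{L-a_0}_2\,e^{-\lambda_1 x}
 \;=\;\sqrt{1+\frac{1}{2\sqrt 2\,x\log 2}}\;\norm{L-a_0}_2\,2^{-x},
\]
using $e^{-\lambda_1 x}=e^{-(\log 2)x}=2^{-x}$. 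Finally, $2\sqrt 2=\sqrt 8$ turns the radicand into $1+\tfrac{1}{x\sqrt 8\log 2}$, which is exactly the asserted inequality; the absolute convergence of $L_x$ on $\Re(s)=\sigma$ for $x>0$ is already part of the conclusion of Lemma 5 (and follows a posteriori from the finiteness of the right-hand side).

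Since all the analytic content sits in Lemmas 4 and 5, there is no real obstacle here; the only point needing a moment's care is the bookkeeping for $C$. One should note that \eqref{Cdef} with $\lambda_n=\log(n+1)$ cannot hold with any finite $C$ when $\sigma<\tfrac12$ and infinitely many $a_n$ are nonzero, because the consecutive ratios then grow like $n^{1-2\sigma}$; so the effective content of the theorem is for $\sigma\ge\tfrac12$, which is the classical range and the only one in which $(\sqrt 2\log 2)^{-1}$ is uniformly admissible in \eqref{Cdef}. A fully self-contained alternative would be to rerun the proof of Lemma 4 with $\lambda_n=\log(n+1)$, where $\sum_{n\ge 1}(n+1)^{-2(\sigma+x)}=\zeta(2\sigma+2x)-1$ can be summed exactly; but the route through Lemma 5 already yields the stated constant, so I would present that.
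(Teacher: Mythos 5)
Your proof is correct and takes essentially the same route as the paper: the paper's one-line proof cites Theorem 7 and Lemma 1, and the stated constant indeed requires running Lemma 5 with the exact value $\lambda_1=\log 2$ (rather than the Lambert-$W$ bound $K$) together with $C=(\sqrt 2\log 2)^{-1}$ from Lemma 1, exactly as you compute. Your closing observation that \eqref{Cdef} with $\lambda_n=\log(n+1)$ forces $\sigma\ge 1/2$ for the class to contain series with infinitely many nonzero coefficients is accurate, though not needed for the proof.
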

\begin{proof} This follows from Theorem 7 and Lemma 1. \end{proof}

\section{The logarithmic integral}
\subsection{Jensen's inequality in a half-plane}

We will first prove a theorem on the logarithmic integral. We will choose to formulate it rather generally, but we always have our extended class of Dirichlet series in mind, since it will be applied on our classes of Dirichlet series.

\begin{lem}
  Suppose that $\psi$ is a function analytic on $\Re(s) > 1$ such that
  \begin{gather*}
    \sup_{\Re(s)>1} \int_{0}^{1} \abs{\psi(s+it)}^2 dt < \infty,
   \end{gather*}
  and $\psi(2) \neq 0$. Then we have that
  \begin{gather*}
    \int_{-\infty}^\infty \frac {\log^- \abs{\psi \p{1+it}}} {1+t^2} dt \leq     \int_{-\infty}^\infty \frac {\log^+ \psi \abs{\p{\sigma+it}}} {1+t^2} dt - 
      \pi \log  \abs{\psi(2)},
  \end{gather*} 
if $\psi$ is defined on $\Re(s)=1$ by its limit when $\Re(s) \to 1$.
 \end{lem}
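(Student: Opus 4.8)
The plan is to transfer the statement to the unit disc via a conformal map and invoke the classical Jensen inequality for $H^2(T)$. First I would compose $\psi$ with the standard Möbius map $\varphi$ sending the disc $\abs{w}<1$ onto the half-plane $\Re(s)>1$ and sending $0$ to the point $2$; explicitly $\varphi(w) = 1 + \frac{1+w}{1-w}$, so that $\varphi(0)=2$ and $\varphi$ maps the unit circle (minus $w=1$) onto the line $\Re(s)=1$. Set $F(w) = (\psi\circ\varphi)(w)$. The $L^2$-condition $\sup_{\Re(s)>1}\int_0^1\abs{\psi(s+it)}^2\,dt<\infty$ is exactly the kind of uniform integrability hypothesis that, under the change of variables induced by $\varphi$, forces $F\in H^2(T)$ (one checks that the pushforward of arclength measure on the circle is the Poisson-type kernel $\frac{dt}{1+t^2}$ up to a constant on the boundary line, and the vertical strips of height $1$ tile the line, so the sup over the strip controls the disc norm). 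This is the step I expect to be the main technical obstacle: verifying carefully that the stated one-dimensional local $L^2$-bound is enough to conclude $F\in H^2(T)$, and in particular that $F$ has nontangential boundary values a.e.\ agreeing with the boundary values of $\psi$ on $\Re(s)=1$ — this is where the hypothesis ``$\psi$ is defined on $\Re(s)=1$ by its limit'' gets used, and where one must be slightly careful because $\varphi$ is singular at $w=1$.

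Next, since $F\in H^2(T)$ and $F(0)=\psi(2)\neq 0$, the classical Jensen inequality (see e.g. \cite[Chapter 17]{Rudin}) gives
\begin{gather*}
  \log\abs{F(0)} \leq \frac{1}{2\pi}\int_{-\pi}^{\pi} \log\abs{F(e^{i\theta})}\,d\theta.
\end{gather*}
I would then split $\log = \log^+ - \log^-$ on the right and transport everything back to the line through $w = \varphi^{-1}(1+it)$, under which $\frac{1}{2\pi}\,d\theta$ becomes $\frac{1}{\pi}\frac{dt}{1+t^2}$. Rearranging the resulting inequality
\begin{gather*}
  \log\abs{\psi(2)} \leq \frac{1}{\pi}\int_{-\infty}^\infty \frac{\log^+\abs{\psi(1+it)}}{1+t^2}\,dt - \frac{1}{\pi}\int_{-\infty}^\infty \frac{\log^-\abs{\psi(1+it)}}{1+t^2}\,dt
\end{gather*}
and multiplying by $\pi$ yields
\begin{gather*}
  \int_{-\infty}^\infty \frac{\log^-\abs{\psi(1+it)}}{1+t^2}\,dt \leq \int_{-\infty}^\infty \frac{\log^+\abs{\psi(1+it)}}{1+t^2}\,dt - \pi\log\abs{\psi(2)},
\end{gather*}
which is the claimed bound (note the factor $2$ discrepancy: $d\theta$ over $[-\pi,\pi]$ versus $\frac{dt}{1+t^2}$ over $\R$ both have mass $2\pi$, so the constants match cleanly).

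One subtlety worth flagging in the write-up: Jensen's inequality in the form above requires that $\log\abs{F}$ be integrable on the circle, i.e.\ that $F$ not vanish on a set of positive measure — but this is automatic for $F\in H^2(T)\setminus\{0\}$, and $F\not\equiv 0$ since $F(0)=\psi(2)\neq 0$. A second point is that the argument only needs $\log^+\abs{\psi(1+it)}/(1+t^2)$ to be integrable for the statement to be non-vacuous; this follows from $F\in H^2(T)$ because $\log^+\abs{F}\leq \tfrac12\abs{F}^2$ pointwise, so $\log^+\abs{F}$ is integrable on the circle. I would remark that this is essentially the computation in Koosis \cite[pp. 49--52]{Koosis}, the only difference being that we normalize the base point at $s=2$ rather than at infinity, which is what produces the clean ``$-\pi\log\abs{\psi(2)}$'' term rather than a limiting expression.
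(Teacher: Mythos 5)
Your proposal follows essentially the same route as the paper: transfer to the unit disc by a Möbius map sending $0\mapsto 2$ (your $\varphi(w)=2/(1-w)$ is the paper's $\phi(z)=2/(1+z)$ precomposed with $w\mapsto -w$), note $F=\psi\circ\varphi\in H^2(T)$, apply the classical Jensen inequality at the origin, split $\log=\log^+-\log^-$, and pull back using $|d\theta|=2\,dt/(1+t^2)$, which gives exactly the stated constants. The argument is correct, and you are in fact more explicit than the paper about the one genuinely delicate step (that the uniform local $L^2$ bound on vertical segments forces $F\in H^2(T)$ with the right boundary values).
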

\begin{rem}
  We will essentially follow the proof from Koosis \cite{Koosis}, page 49-52. While he does not explicitly state Lemma 6, it follows immediately from his results.  Instead of proving Lemma 6 he uses it to show that
\begin{gather*}
\int_{-\infty}^\infty \frac {\log^+ \abs{\psi \p{1+it}}} {1+t^2} dt <\infty \implies    \int_{-\infty}^\infty \frac {\log^- \psi \abs{\p{\sigma+it}}} {1+t^2} dt <\infty. 
\end{gather*}
This result is also a consequence of  Lemma 6, although we might be forced to use  a shifted variant $\psi(x)=\psi(t+x)$ if $\psi(2)=0$.
\end{rem}

\noindent {\em Proof of Lemma 6.}
 We use the following holomorphic bijection 
 \begin{gather}
   \phi(z)=\frac {2} {(1+z)}, \label{trans} \\ \intertext{which maps the unit disc $\{ z \in \C: |z|<1\}$  to the half-plane $\{s \in \C: \Re(s)>1 \}$. Then}
 f(z)=\psi(\phi(z)), \label{trans2} 
\end{gather}
will be  a holomorphic function on the unit disc, that is $L^2$ on the boundary, and it will belong to the Hardy-class $H^2(T)$ on the circle.\footnote{We used this to prove the Vanishing Lemma in  \cite{Andersson4}. By  \cite[Theorem 17.17]{Rudin} (see also \cite[Theorem 3.14]{Katznelson}) the function $\log |f(z)|$ will be $L^1(T)$ on the circle. Since any interval  on $\Re(s)=1$ is the map of some circle segment it means that $\log \abs{ \psi }$ is locally $L^1$ on the line $\Re(s)=1/2$. From this it follows that the zero-set of $f(e^{i \theta})$ and thus also $\psi(1+it)$  has zero measure for real $t$. } We will  use this bijection to obtain a formula for the logarithmic integral.
 With the change of variables (see e.g. \cite[p. 1]{Koosis})
\begin{gather} \label{i9}
  1+it=\frac{2} {(1+e^{i \theta})}, \\ \intertext{we have  that}
   \theta=\tan \frac t 2, \notag \\ \intertext{and we get that} \label{i10}
 \int_{-\infty}^\infty \frac { \log^{-} \abs{\psi(1+it)}} {1+t^2} dt  = 
  \halv  \int_{0}^{2 \pi} \log^{-} \abs{f(\theta)} d\theta.
\end{gather}
 By Jensen's inequality (Compare with Katznelson \cite[p. 90]{Katznelson})
\begin{gather} \label{jensineq} 
 \log \abs{f(0)} \leq \frac 1 {2\pi} \int_0^{2 \pi} \log \abs{f(re^{i\theta})} d\theta, \\ \intertext{by dividing the logarithm function} \notag
 \log |f(x)|=\log^+|f(x)|+\log^-|f(x)|,
\\ \intertext{and by taking the limit $r\to 1$,  it follows that} \notag
 \log \abs{f(0)} -  \frac 1 {2 \pi} \int_0^{2 \pi} \log^+ \abs{f(e^{i\theta})}d\theta \leq - \frac 1 {2 \pi} \int_0^{2 \pi} \log^{-}\abs{f(e^{i\theta})} d\theta 
\leq 0.
\end{gather}
Rearranging the terms we see that \begin{gather} \label{i909}
 \frac 1 {2 \pi} \int_0^{2 \pi} \log^- \abs{f(e^{i \theta})} d\theta \leq  \frac 1 {2 \pi} \int_0^{2 \pi} \log^{+}\abs{f(e^{i\theta})}d\theta -  \log \abs{f(0)}.
\end{gather}
By \eqref{trans} and \eqref{trans2} it follows that \begin{gather*}
  \log \abs{f(0)}=\log \abs{\psi(2)}.
 \end{gather*}
By using the substitution \eqref{i9} again and the identity \eqref{i10} we translate the integrals in \eqref{i909} back to the $t$-line and we get the result. \qed

\begin{rem}
 We have equality in Lemma 6 if and only if the function $\psi(s)$ has no zeroes for $\Re(s)> 1$. This follows since it is the real part of an analytic function by moving the integration path, and the fact that we have equality in Jensen's inequality \eqref{jensineq} if the function is non-zero on the disc.
\end{rem}

\begin{rem}
  The logarithmic integral that occurs in Lemma 6 has been thoroughly studied in the volumes of Koosis \cite{Koosis,Koosis2}. Other results related to the logarithmic integral, the Paley-Wiener theorems were also important tools in our solution to a generalized problem of Ramachandra \cite{Andersson4}. 
\end{rem}

\subsection{The logarithmic integral for Dirichlet series}
We will now apply Lemma 6 on our Dirichlet series.

\begin{lem}
  Suppose  $L \in H^2(C,\sigma)$ and $L(D+\sigma) \neq 0$ for $D>0$. Then 
  \begin{gather*}
    \frac D \pi \int_{-\infty}^\infty \frac {\log^- \abs{L \p{\sigma+it}}} {D^2+t^2} dt \leq     \frac D \pi \int_{-\infty}^\infty \frac {\log^+ L \abs{\p{\sigma+it}}} {D^2+x^2} dt - 
        \log  \abs{L(D+\sigma)}.
  \end{gather*} 
   \end{lem}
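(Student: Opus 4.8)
The plan is to derive Lemma 8 from Lemma 6 by the affine change of variables that carries the half‑plane $\Re(s)>\sigma$ onto $\Re(s)>1$ and the point $\sigma+D$ onto $2$. Accordingly I would set
\[
  \psi(s)=L\bigl(D(s-1)+\sigma\bigr).
\]
Since $D>0$, this is holomorphic on $\Re(s)>1$: any $L\in H^2(C,\sigma)$ converges absolutely and locally uniformly on $\Re(s)>\sigma$ — by Cauchy–Schwarz together with the bound $\sum_{n\ge1}e^{-2\sigma_1\lambda_n}<\infty$ for $\sigma_1>\sigma$ supplied by Lemma 4 — hence is analytic there, and $D(s-1)+\sigma$ maps $\Re(s)>1$ into that region. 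Moreover $\psi(2)=L(D+\sigma)\neq0$ by hypothesis. I would verify the hypotheses of Lemma 6 for $\psi$ directly rather than try to exhibit $\psi$ as a member of some class $H^2(C',1)$, since when $D<\sigma$ the rescaled exponents $D\lambda_n$ need not satisfy \eqref{Cdef} with $\sigma$ replaced by $1$.

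Next I would check the remaining hypothesis of Lemma 6, namely $\sup_{\Re(s)>1}\int_0^1\abs{\psi(s+it)}^2\,dt<\infty$. Writing $s=\sigma_0+i\tau_0$ with $\sigma_0>1$ and substituting $v=Dt$,
\[
  \int_0^1\abs{\psi(s+it)}^2\,dt=\frac1D\int_{D\tau_0}^{D\tau_0+D}\abs{L(\sigma_1+iw)}^2\,dw,\qquad \sigma_1:=D(\sigma_0-1)+\sigma>\sigma,
\]
and this is at most $\tfrac1D(D+3\pi C)\norm{L}_2^2$ by Theorem 4 — whose proof, being invariant under the translation $t\mapsto t+T$ (the Montgomery–Vaughan bound on the off‑diagonal terms depends only on $\abs{b_n}$), equally gives $\int_T^{T+D}\abs{L(\sigma_1+it)}^2\,dt\le(D+3\pi C)\norm{L}_2^2$ for every real $T$. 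This bound is uniform in $\sigma_0$ and $\tau_0$, so Lemma 6 applies to $\psi$, its boundary values on $\Re(s)=1$ being $\psi(1+it)=L(\sigma+iDt)$, interpreted as the $L^2$/radial limit consistent with the conformal‑map description underlying Lemma 6.

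Finally I would transport the conclusion of Lemma 6 back to the line $\Re(s)=\sigma$. Applying Lemma 6 to $\psi$ and substituting $u=Dt$ in each of the three integrals, using $1+t^2=(D^2+u^2)/D^2$, converts $\int_{-\infty}^\infty\frac{\log^\mp\abs{\psi(1+it)}}{1+t^2}\,dt$ into $D\int_{-\infty}^\infty\frac{\log^\mp\abs{L(\sigma+iu)}}{D^2+u^2}\,du$, while $\psi(2)=L(D+\sigma)$; dividing the resulting inequality through by $\pi$ yields precisely the assertion of Lemma 8.

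There is no genuine analytic obstacle beyond what Lemma 6 and Theorem 4 already provide; the points requiring attention are purely the bookkeeping ones — choosing the affine map so that $\sigma+D\mapsto2$, and, above all, noticing that Theorem 4 must be invoked in its translation‑invariant form in order to bound $\int_0^1\abs{\psi(s+it)}^2\,dt$ for arbitrary $\Im(s)$ (the version literally stated controls only $\int_0^D$), together with carefully tracking the Jacobian factor $D$ as the kernel $1/(1+t^2)$ becomes $D^2/(D^2+u^2)$ under $u=Dt$.
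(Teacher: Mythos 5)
Your proposal is correct and follows essentially the same route as the paper: both set $\psi(s)=L(Ds+b)$ with $b=\sigma-D$ (your $L(D(s-1)+\sigma)$ is the same map), verify the $L^2$ hypothesis of Lemma 6 via Theorem 4, and undo the substitution $u=Dt$. Your observation that Theorem 4 must be used in its translation-invariant form $\int_T^{T+D}$ is a worthwhile point of care that the paper's own proof glosses over.
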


\begin{proof}
  By Theorem 4 it follows that 
\begin{gather*}
    \sup_{\Re(s)>1} \int_{0}^{1} \abs{L(as+b+ait)}^2dt <\infty,
\end{gather*}
whenever $D+b=\sigma$ and $D,b$ are real numbers. By Lemma 6 with 
\begin{gather*}
 \psi(s)=L(Ds+b),
\end{gather*}
we obtain the inequality:
 \begin{gather*}
    \int_{-\infty}^\infty \frac {\log^- \abs{L \p{Dit+b+D}}} {1+t^2} dt \leq    \int_{-\infty}^\infty \frac {\log^+ L \abs{\p{Dit+b+D}}} {1+t^2} dt - 
      \pi \log  \abs{L(2D+b)}.
  \end{gather*} 
The result follows from the substitution $x=t/D$, and  the fact that $D+b=\sigma$.
\end{proof}
We will now apply Theorem 4 and another version of Jensen's inequality 
\begin{thm}
  Suppose  $L \in H^2(C,\sigma)$. Then 
  \begin{gather*}
      \frac D {\pi} \int_{-\infty}^\infty \frac{\log^+ \abs{L(\sigma+it)}}{t^2+D^2} dt \leq  \kappa+\frac 1 2 \log \p{1+\frac{3 \pi C}D}+\log \norm{L}_2, \\ \intertext{whenever the right hand side is non negative, and}
  \kappa= \frac 1 2 \log \p{\tanh \pi + \frac{1}{\pi}} = 0.2735187155.
\end{gather*}
\end{thm}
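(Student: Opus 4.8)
The plan is to bound the $\log^+$-integral by combining Jensen's inequality (in the form of Lemma~6) with the $L^2$-bound from Theorem~4. The starting point is that $\log^+ x \le \tfrac12\log(1+x^2)$ for all $x\ge 0$, together with the concavity of the logarithm, which lets us pull the Poisson integral inside: since the Poisson kernel $\tfrac{D}{\pi(t^2+D^2)}$ has total mass $1$ on $\R$, Jensen's inequality (for the concave function $\log$) gives
\begin{gather*}
 \frac{D}{\pi}\int_{-\infty}^\infty \frac{\log^+\abs{L(\sigma+it)}}{t^2+D^2}\,dt
 \le \frac12\,\frac{D}{\pi}\int_{-\infty}^\infty \frac{\log\p{1+\abs{L(\sigma+it)}^2}}{t^2+D^2}\,dt
 \le \frac12\log\p{1+\frac{D}{\pi}\int_{-\infty}^\infty \frac{\abs{L(\sigma+it)}^2}{t^2+D^2}\,dt}.
\end{gather*}
So the task reduces to estimating the weighted $L^2$-mean $\tfrac{D}{\pi}\int_{-\infty}^\infty \abs{L(\sigma+it)}^2(t^2+D^2)^{-1}\,dt$.

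Next I would estimate this weighted mean using Theorem~4. The idea is to break $\R$ into the dyadic-type blocks $[kD,(k+1)D)$ and $[-(k+1)D,-kD)$ for $k\ge 0$: on the block at distance roughly $\abs{k}D$ from the origin the Poisson weight is of size $\tfrac{D}{\pi}\cdot\tfrac{1}{(k^2+1)D^2}$, while Theorem~4 (applied on an interval of length $D$, after a harmless translation in $t$, which does not change the hypothesis or the bound) controls $\int$ of $\abs{L(\sigma+it)}^2$ over that block by $(D+3\pi C)\norm{L}_2^2$. Summing the geometric-type series $\sum_{k\in\Z}\tfrac{1}{k^2+1}$ produces a constant; the point is that this constant, after being combined with the $\tfrac12\log(1+\cdot)$ from the previous step, must come out to exactly $\kappa+\tfrac12\log(1+3\pi C/D)+\log\norm{L}_2$. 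Matching the constant $\kappa=\tfrac12\log(\tanh\pi+1/\pi)$ suggests that the correct way to organize the bound is not crude dyadic blocking but rather an exact evaluation: one writes $\abs{L(\sigma+it)}^2$ as the double Dirichlet sum and integrates $\tfrac{D}{\pi}\int e^{(\lambda_m-\lambda_n)it}(t^2+D^2)^{-1}\,dt = e^{-\abs{\lambda_m-\lambda_n}D}$ term by term, then applies Montgomery--Vaughan exactly as in the proof of Theorem~4, but now the off-diagonal terms carry the extra decay $e^{-\abs{\lambda_m-\lambda_n}D}\le 1$ and the diagonal gives $\norm{L}_2^2$ while the weight integral $\tfrac{D}{\pi}\int(t^2+D^2)^{-1}=1$; the refinement $\tanh\pi$ comes from doing the off-diagonal bookkeeping over a period rather than globally.

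Concretely, I expect the clean route to be: take $\psi(s)=L(Ds+b)$ with $D+b=\sigma$ exactly as in Lemma~8, transfer to the disc via $\phi$ and $f=\psi\circ\phi$ as in Lemma~6, so that $\tfrac{D}{\pi}\int_{-\infty}^\infty \tfrac{\log^+\abs{L(\sigma+it)}}{t^2+D^2}\,dt = \tfrac{1}{2\pi}\int_0^{2\pi}\log^+\abs{f(e^{i\theta})}\,d\theta$. Then use $\log^+ u \le \tfrac12\log(1+u^2)$ and Jensen on the circle: $\tfrac{1}{2\pi}\int_0^{2\pi}\log^+\abs{f}\,d\theta \le \tfrac12\log\p{1+\tfrac{1}{2\pi}\int_0^{2\pi}\abs{f(e^{i\theta})}^2\,d\theta}$. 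Now $\tfrac{1}{2\pi}\int_0^{2\pi}\abs{f(e^{i\theta})}^2\,d\theta = \sum_n\abs{c_n}^2$ where $c_n$ are the Taylor coefficients of $f$; but these are not the $a_n$, so instead I would estimate $\tfrac{1}{2\pi}\int_0^{2\pi}\abs{f(e^{i\theta})}^2\,d\theta = \tfrac{D}{\pi}\int_{-\infty}^\infty \tfrac{\abs{L(\sigma+it)}^2}{t^2+D^2}\,dt$ directly by the term-by-term integration described above, getting $\norm{L}_2^2 + (\text{off-diagonal})$, and bound the off-diagonal by Montgomery--Vaughan together with \eqref{Cdef} to obtain something of the shape $\norm{L}_2^2\p{\tanh\pi + \tfrac{3\pi C}{D}+\tfrac{1}{\pi}}$ or similar. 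Plugging this into $\tfrac12\log(1+\cdot)$ and splitting $\tfrac12\log\p{(\tanh\pi+1/\pi)(1+3\pi C/D)\norm{L}_2^2}$ gives the three stated terms. The main obstacle is precisely the bookkeeping that produces the sharp constant $\kappa$: one has to compute the weighted off-diagonal sum carefully enough (exploiting $\sum_{k\ge 1}e^{-k\cdot}$ geometric decay and the periodicity built into the Poisson-on-the-line $=$ arc-length-on-the-circle correspondence) so that the constant coming out of Montgomery--Vaughan is $\tanh\pi + 1/\pi$ and not merely $O(1)$; everything else is the routine combination of Lemma~6, Theorem~4, and the elementary inequality $\log^+ u\le\tfrac12\log(1+u^2)$.
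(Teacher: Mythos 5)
Your overall skeleton --- a Jensen-type inequality with the Poisson kernel $\frac{D}{\pi(t^2+D^2)}$ as probability measure, followed by the locally-$L^2$ bound of Theorem 4 applied to translated blocks of length $D$ --- is exactly the paper's route, and your final guess at the shape of the $L^2$-mean, $\norm{L}_2^2\p{\tanh\pi+\frac1\pi}\p{1+\frac{3\pi C}{D}}$, is correct. But the proposal has a genuine gap at precisely the point you flag as ``the main obstacle'': you never actually produce the constant $\kappa=\frac12\log(\tanh\pi+1/\pi)$, and the mechanism you propose for it (term-by-term evaluation of $\frac{D}{\pi}\int e^{i(\lambda_m-\lambda_n)t}(t^2+D^2)^{-1}dt=e^{-\abs{\lambda_m-\lambda_n}D}$, then a refined Montgomery--Vaughan with exponential off-diagonal decay) is a wrong turn; nothing of the sort is needed, and no detour through the disc via Lemma 6 is needed either. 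The constant comes from elementary blocking alone, but with the blocks \emph{centred} at the points $kD$ rather than your choice $[kD,(k+1)D)$: writing $\R=[-D/2,D/2]\cup\bigcup_{k\ge0}\pm[(k+\tfrac12)D,(k+\tfrac32)D]$, bounding the kernel on each block by its maximum, and applying Theorem 4 to each block gives
\begin{gather*}
\int_{-\infty}^\infty\frac{\abs{L(\sigma+it)}^2}{t^2+D^2}\,dt\le\frac{1}{D^2}\p{1+\sum_{k=0}^\infty\frac{2}{(k+1/2)^2+1}}(3\pi C+D)\norm{L}_2^2=(1+\pi\tanh\pi)\,\frac{3\pi C+D}{D^2}\norm{L}_2^2,
\end{gather*}
using the classical identity $\sum_{k\ge0}\frac{2}{(k+1/2)^2+1}=\pi\tanh\pi$; multiplying by $D/\pi$ turns $1+\pi\tanh\pi$ into $\tanh\pi+1/\pi$. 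Your blocks $[kD,(k+1)D)$ instead give $1+\pi\coth\pi$, i.e.\ the slightly weaker constant $\frac12\log(\coth\pi+1/\pi)$ that the remark immediately after Theorem 10 explicitly contrasts with $\kappa$.

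A second, smaller issue: your opening chain $\log^+u\le\frac12\log(1+u^2)$ followed by Jensen yields $\frac12\log\p{1+\frac{D}{\pi}\int\abs{L}^2(t^2+D^2)^{-1}dt}$, and the extra $1$ inside the logarithm makes this strictly larger than the stated right-hand side (visibly so when $\norm{L}_2$ is small). The paper instead applies the Jensen inequality \eqref{jensineq2} directly to $\log^+$, obtaining $\frac{D}{\pi}\int\frac{\log^+\abs{L(\sigma+it)}}{t^2+D^2}\,dt\le\frac12\log^+\p{\frac{D}{\pi}\int\frac{\abs{L(\sigma+it)}^2}{t^2+D^2}\,dt}$ with no added $1$; the proviso ``whenever the right hand side is non negative'' is exactly what absorbs the case in which the argument of $\log^+$ falls below $1$. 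With these two corrections your argument becomes the paper's proof.
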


\begin{proof}
A  variant of a different Jensen's inequality \cite[Theorem 3.3]{Rudin}than the one we have already studied states that
\begin{gather}
 \label{jensineq2}
  \int_{-\infty }^\infty \mu(t) \log^+ \abs{f(t)} dt \leq \log^+ \abs{\int_{-\infty }^\infty \mu(t) |f(t)| dt}, \\ \intertext{whenever} \notag  \int_{-\infty}^\infty \mu(t)dt=1, \qquad \text{and} \qquad  \mu(t) \geq 0.
 \end{gather}
This inequality is true also  if we replace $\log^+$ by any  concave function, for example $-\log^-$. For an application of this inequality for Dirichlet series with Euler product, see our paper \cite{Andersson3}. By the equality
 \begin{gather} \label{yy123} \int_{-\infty}^\infty \frac {dt} {D^2+t^2}= \frac \pi D, \\ \intertext{we can choose $\mu(t)=D/(\pi(t^2+D^2))$ in  \eqref{jensineq2}, and it follows that}
    \frac D {\pi} \int_{-\infty}^\infty \frac{\log^+ \abs{L(\sigma+it)}}{t^2+D^2} dt \leq   \frac 1 2 \log^+ \p{ \frac D {\pi} \int_0^\infty \frac{\abs{L(\sigma+it)}^2}{t^2+D^2} dt}.  \label{ineqa} \end{gather}
We will now use the inequality
\begin{multline*}
 \int_{-\infty}^\infty \frac{\abs{L(\sigma+it)}^2}{t^2+D^2} dt    \leq \int_{-D/2}^{D/2} \abs{L(\sigma+it)}^2 dt +\sum_{k=0}^\infty \frac 1 {D^2((k+1/2)^2+1)}  \times \\  \times  \p{\int_{(k+1/2)D}^{(k+3/2)D} \abs{L(\sigma+it)}^2 dt +  \int_{-(k+1/2)D}^{-(k+3/2)D} \abs{L(\sigma+it)}^2 dt}.
\end{multline*}
By Theorem 4 the integrals can be bounded by $(3 \pi C+D) \norm{L}_2^2$, and  thus
 \begin{multline*} \int_{-\infty}^\infty \frac{\abs{L(\sigma+it)}^2}{t^2+D^2} dt \leq \frac 1 {D^2} \p{1+\sum_{k=0}^\infty \frac 2 {(k+1/2)^2+1}} (3 \pi C+D) \norm{L}_2^2  = \\ =
  \p{1+\pi \tanh \pi} \frac {3 \pi C+D}{D^2} \norm{L}_2^2.
  \end{multline*}
  The result follows by Eq. \eqref{ineqa}.
\end{proof}

\begin{rem}
  The symmetric division of the integral \begin{gather*} 
\int_{-\infty}^\infty \frac{\abs{L(\sigma+it)}^2}{t^2+D^2} dt = \sum_{k=-\infty}^\infty \int_{(k-1/2)D}^{(k+1/2)D} \frac{\abs{L(\sigma+it)}^2}{t^2+D^2} dt, \\ \intertext{gives a somewhat better constant than using the division}
\int_{-\infty}^\infty \frac{\abs{L(\sigma+it)}^2}{t^2+D^2} dt =\sum_{k=-\infty}^\infty\int_{(k-1)D}^{kD} \frac{\abs{L(\sigma+it)}^2}{t^2+D^2} dt. 
\end{gather*}
For this division see e.g. \cite[Eq. (3)]{OlsenSaksman}. If we would have used this instead we would have obtained  $\kappa=1/2 \log(\coth \pi+1/\pi)=0.27918489270$ instead of $\kappa=1/2\log(\tanh \pi+1/\pi)= 0.2735187155$ in Theorem 10. 
\end{rem}

We also prove the corresponding result for the $L^1$-norm defined by Eq. \eqref{norm3}
\begin{lem}
  Suppose  $L(s)$ is absolutely convergent on $\Re(s)=\sigma$. Then
  \begin{gather*}
      \frac D {\pi} \int_{-\infty}^\infty \frac{\log^+ \abs{L(\sigma+it)}}{t^2+D^2} dt \leq  \log^+ \norm{L}_1. 
\end{gather*}
\end{lem}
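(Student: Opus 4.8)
The plan is to run the proof of Theorem 10 essentially verbatim, but with one simplification: since $L$ is absolutely convergent on $\Re(s)=\sigma$, a pointwise majorant of $\abs{L(\sigma+it)}$ is the constant $\norm{L}_1$, so there is no need to split the integral dyadically and no need to invoke Theorem 4.

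First I would record the pointwise bound. By \eqref{Ldef}, the Dirichlet condition \eqref{dircon}, and the definition \eqref{norm3} of $\norm{L}_1$, the triangle inequality gives
\[
  \abs{L(\sigma+it)} \leq \sum_{n=0}^\infty \abs{a_n}\, e^{-\lambda_n \sigma} = \norm{L}_1
\]
for every real $t$; in particular the series converges everywhere on the line $\Re(s)=\sigma$ and defines there a bounded, continuous function, so all the integrals below are finite.

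Next I would apply the Jensen-type inequality \eqref{jensineq2} with the probability density $\mu(t)=D/\p{\pi(t^2+D^2)}$, which has total mass $1$ by \eqref{yy123}. Taking $f(t)=L(\sigma+it)$ this yields
\[
  \frac D \pi \int_{-\infty}^\infty \frac{\log^+\abs{L(\sigma+it)}}{t^2+D^2}\,dt \leq \log^+\!\p{\frac D \pi \int_{-\infty}^\infty \frac{\abs{L(\sigma+it)}}{t^2+D^2}\,dt}.
\]
Inserting the pointwise bound $\abs{L(\sigma+it)}\leq\norm{L}_1$ into the right-hand integral and using \eqref{yy123} once more gives $\tfrac D \pi \int_{-\infty}^\infty \abs{L(\sigma+it)}/(t^2+D^2)\,dt \leq \norm{L}_1$; since $\log^+$ is nondecreasing the asserted inequality follows.

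There is essentially no serious obstacle here. The only point deserving a word of care is the applicability of \eqref{jensineq2}, which requires $\log^+\abs{L(\sigma+it)}$ to be $\mu$-integrable and $\abs{L(\sigma+it)}$ measurable; both are immediate from the paragraph above, the integrand being dominated by $\log^+\norm{L}_1\cdot\mu(t)$ and $L(\sigma+it)$ being continuous in $t$. In short, this lemma is the $L^1$-analogue of Theorem 10 with the Montgomery--Vaughan input (Theorem 4) replaced by the trivial triangle-inequality estimate.
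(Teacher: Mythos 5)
Your proof reaches the stated bound, and its essential content --- the triangle-inequality estimate $\abs{L(\sigma+it)}\le\norm{L}_1$ combined with the normalization \eqref{yy123} of the kernel $\mu(t)=D/\p{\pi(t^2+D^2)}$ --- is exactly what the paper uses. The one structural difference is your detour through the Jensen-type inequality \eqref{jensineq2}: the paper does not invoke it here. Instead it takes $\log^+$ of the pointwise bound directly, i.e.\ $\log^+\abs{L(\sigma+it)}\le\log^+\norm{L}_1$ for every $t$ (since $\log^+$ is nondecreasing), and then integrates this constant against $\mu$. That route is not only shorter but safer: $\log^+$ is neither concave nor convex (its derivative jumps from $0$ to $1$ at $x=1$), so \eqref{jensineq2} is not an instance of the genuine Jensen inequality, and as a general statement it can fail --- take $f$ vanishing on most of the mass of $\mu$ and equal to a large value $M$ on a set of measure $\varepsilon$ with $\varepsilon M$ just above $1$; then $\int\mu\log^+\abs{f}\approx\varepsilon\log(1/\varepsilon)$ while $\log^+\p{\int\mu\abs{f}}\approx 0$. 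Your argument lands on a true conclusion only because the stronger pointwise bound $\abs{L(\sigma+it)}\le\norm{L}_1$ is available, and that bound already yields the lemma without any appeal to \eqref{jensineq2}. I would therefore delete the Jensen step: everything needed is contained in your first paragraph together with \eqref{yy123}, which is precisely the paper's proof.
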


\begin{proof}
 By the triangle inequality we have that
 \begin{gather*}
    \abs{L(\sigma+it)} = \abs{\sum_{n=0}^\infty a_n e^{-\lambda_n (\sigma+it)}} \leq \sum_{n=0}^\infty \abs{a_n}  e^{-\lambda_n \sigma}=\norm{L}_1. 
  \end{gather*}
  Thus we have that  \begin{gather}
  \label{xxx}
    \log^+\abs{L(\sigma+it)} \leq \log^+ \norm{L}_1. \end{gather}
   Lemma 8 now follows from the identity Eq. \eqref{yy123}.
 \end{proof}
\noindent An immediate  consequence is the following
\begin{thm}
  Suppose  $L \in H^2(C,\sigma)$ and $L(D+\sigma) \neq 0$ for $D>0$. Then we have that
  \begin{gather*}
    \frac D \pi \int_{-\infty}^\infty \frac {\log^- \abs{L \p{\sigma+it}}} {D^2+t^2} dt   \leq \kappa +
      \frac 1 2 \log \p{1+\frac{3 \pi C}D}+\log \norm{L}_2- 
        \log  \abs{L(D+\sigma)},
 \end{gather*}
  where $\kappa$ is defined by Theorem 10.
 \end{thm}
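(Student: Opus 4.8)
The plan is to read this off the two preceding results with almost no extra work. By Lemma~7, whose hypotheses ($L\in H^2(C,\sigma)$ and $L(D+\sigma)\neq 0$) are exactly the ones assumed here,
\[
  \frac D\pi\int_{-\infty}^\infty\frac{\log^-\abs{L(\sigma+it)}}{D^2+t^2}\,dt\le\frac D\pi\int_{-\infty}^\infty\frac{\log^+\abs{L(\sigma+it)}}{D^2+t^2}\,dt-\log\abs{L(D+\sigma)},
\]
so it suffices to control the $\log^+$-integral on the right. If the quantity $\kappa+\tfrac12\log(1+3\pi C/D)+\log\norm{L}_2$ is non-negative, Theorem~10 bounds that integral by exactly this quantity, and substituting into the displayed inequality gives the assertion at once.

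The one genuine point of care is the edge case in which $\kappa+\tfrac12\log(1+3\pi C/D)+\log\norm{L}_2<0$ (so $\norm{L}_2$ is small and Theorem~10 says nothing); this is the step I expect to cost a little thought. Here I would argue as follows. The inequality~\eqref{ineqa} underlying Theorem~10 bounds the $\log^+$-integral by $\tfrac12\log^+$ of a weighted $L^2$-mean of $\abs{L}^2$, and by the bound for that mean coming from Theorem~4 (the dyadic estimate carried out in the proof of Theorem~10), the argument of this $\log^+$ is precisely $(\tanh\pi+\tfrac1\pi)(1+3\pi C/D)\norm{L}_2^2$, which is $<1$ in the present case; hence the $\log^+$-integral \emph{vanishes}. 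Consequently $\frac D\pi\int\log^-\abs{L(\sigma+it)}/(D^2+t^2)\,dt$ equals the full logarithmic mean $\frac D\pi\int\log\abs{L(\sigma+it)}/(D^2+t^2)\,dt$. By the concave-function form of Jensen's inequality \eqref{jensineq2} applied with the probability density $D/(\pi(D^2+t^2))$ (cf.\ \eqref{yy123}), followed by Cauchy--Schwarz and the same $L^2$-bound as above, this logarithmic mean is again at most $\kappa+\tfrac12\log(1+3\pi C/D)+\log\norm{L}_2$. Finally, the half-plane Jensen inequality $\log\abs{L(D+\sigma)}\le\frac D\pi\int_{-\infty}^\infty\log\abs{L(\sigma+it)}/(D^2+t^2)\,dt$ — which is \eqref{jensineq} transported back to the line $\Re(s)=\sigma$ exactly as in the proof of Lemma~7 — shows $\log\abs{L(D+\sigma)}\le 0$ in this regime, so that subtracting $\log\abs{L(D+\sigma)}$ (a non-negative quantity, after the sign change) only weakens the bound; this gives the claimed inequality in the edge case too.

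Assembling the two cases completes the proof. I would also record, in parallel with Lemma~8, that using Lemma~8 in place of Theorem~10 in the first paragraph yields the $L^1$-analogue $\frac D\pi\int\log^-\abs{L(\sigma+it)}/(D^2+t^2)\,dt\le\log^+\norm{L}_1-\log\abs{L(D+\sigma)}$ for $L$ merely absolutely convergent on $\Re(s)=\sigma$.
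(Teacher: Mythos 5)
Your first paragraph is precisely the paper's proof: Theorem 11 is obtained by inserting the bound of Theorem 10 for the $\log^+$-integral into the inequality of Lemma 7, and nothing more is said. So the main line of your argument is correct and identical to the paper's.

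The edge case you add is where things go wrong, and it is not fixable, because when $\kappa+\tfrac12\log(1+3\pi C/D)+\log\norm{L}_2<0$ the stated inequality is actually false; Theorem 11 has to be read as silently inheriting the caveat of Theorem 10 that this quantity be non-negative (which is harmless in every later application, since there $a_0=1$ forces $\norm{L}_2\ge1$). To see why your patch fails, note that the operative convention in this paper is $\log^-x=\max(0,-\log x)\ge0$ with $\log x=\log^+x-\log^-x$: this is what makes Lemma 10 ("$\log^-$ is a positive function") and the passage from Theorem 15 to the sup-norm lower bound of Theorem 17 correct; the displayed identity $\log=\log^++\log^-$ in the proof of Lemma 6 is a typo. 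With this convention, the vanishing of the $\log^+$-integral in your edge case gives
$\frac D\pi\int_{-\infty}^\infty\log^-\abs{L(\sigma+it)}\,\frac{dt}{D^2+t^2}=-\frac D\pi\int_{-\infty}^\infty\log\abs{L(\sigma+it)}\,\frac{dt}{D^2+t^2}$,
so your subsequent \emph{upper} bound on the full logarithmic mean (via concave Jensen, Cauchy--Schwarz and the $L^2$ estimate) translates into a \emph{lower} bound on the $\log^-$-integral --- the wrong direction. The only upper bound available there is the one Lemma 7 already gives, namely $-\log\abs{L(D+\sigma)}$, and this exceeds the claimed right-hand side exactly when the edge case occurs. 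A concrete counterexample: take $L\equiv\epsilon$ (so $a_0=\epsilon$, all other coefficients zero, $\lambda_n=n$, $C=1$, $\sigma=0$). Then the left-hand side equals $\log(1/\epsilon)\to\infty$ as $\epsilon\to0$, while the right-hand side equals $\kappa+\tfrac12\log(1+3\pi C/D)$, since the two terms $\log\norm{L}_2=\log\epsilon$ and $-\log\abs{L(D+\sigma)}=-\log\epsilon$ cancel. So: keep your first paragraph, delete the second, and add the non-negativity hypothesis to the statement.
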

\begin{proof} This follows from Lemma 7 and Theorem 10. \end{proof}

\begin{thm}
  Suppose   $L(s)$ is absolutely convergent on $\Re(s)=\sigma$ and $L(D+\sigma) \neq 0$ for $D>0$. Then we have that
  \begin{gather*}
    \frac D \pi \int_{-\infty}^\infty \frac {\log^- \abs{L \p{\sigma+ix}}} {D^2+x^2} dt  \leq
     \log^+ \norm{L}_1-      \log  \abs{L(D+\sigma)}.
 \end{gather*}
 \end{thm}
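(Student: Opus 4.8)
The plan is to follow the derivation of Theorem 11 essentially verbatim, with Lemma 8 playing the role of Theorem 10 and with a version of Lemma 7 valid for absolutely convergent Dirichlet series in place of Lemma 7 itself. Thus there are two ingredients to assemble: an inequality of the shape of Lemma 7 for absolutely convergent $L$, and the elementary bound of Lemma 8 on the $\log^+$-integral.

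First I would establish the analogue of Lemma 7. Exactly as in the proof of Lemma 7, set $\psi(s)=L(Ds+b)$ with $D+b=\sigma$, so that $\psi$ is holomorphic on $\Re(s)>1$. The only point that changes is the verification of the hypotheses of Lemma 6: since $L(s)$ is absolutely convergent on $\Re(s)=\sigma$, the triangle inequality (exactly as in the proof of Lemma 8) gives $\abs{L(\sigma+it)}\leq\norm{L}_1$ for all real $t$, hence $\abs{\psi(s+it)}\leq\norm{L}_1$ throughout $\Re(s)>1$, so the condition $\sup_{\Re(s)>1}\int_0^1\abs{\psi(s+it)}^2\,dt<\infty$ holds trivially — here, unlike in Lemma 7, no appeal to Theorem 4 is needed. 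Moreover $\psi(2)=L(2D+b)=L(D+\sigma)\neq0$ by hypothesis. Hence Lemma 6 applies, and after the substitution $x=t/D$ used in the proof of Lemma 7 (together with the identity $2D+b=D+\sigma$) it yields
\[
  \frac D\pi\int_{-\infty}^\infty\frac{\log^-\abs{L(\sigma+ix)}}{D^2+x^2}\,dx
  \leq\frac D\pi\int_{-\infty}^\infty\frac{\log^+\abs{L(\sigma+ix)}}{D^2+x^2}\,dx-\log\abs{L(D+\sigma)}.
\]

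Then I would bound the right-hand integral using Lemma 8, which gives $\frac D\pi\int_{-\infty}^\infty\frac{\log^+\abs{L(\sigma+ix)}}{D^2+x^2}\,dx\leq\log^+\norm{L}_1$; inserting this into the previous display produces the asserted inequality. The whole argument is routine; the only step calling for a little care is the application of Lemma 6, where one must observe that absolute convergence on the line $\Re(s)=\sigma$ already places $\psi$ in the admissible class of Lemma 6 via the uniform bound $\norm{L}_1$ (which is in fact simpler than the corresponding check in Lemma 7 for the $H^2(C,\sigma)$ case), and that the standing hypothesis $L(D+\sigma)\neq0$ is precisely what secures $\psi(2)\neq0$.
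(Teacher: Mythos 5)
Your proposal is correct and follows essentially the same route as the paper, whose proof of this statement is simply ``This follows from Lemma 7 and Lemma 8.'' Your extra care in noting that Lemma 7 as stated assumes $L\in H^2(C,\sigma)$, and in re-running its proof with the hypothesis of Lemma 6 verified via the trivial uniform bound $\abs{L(\sigma+it)}\leq\norm{L}_1$ instead of Theorem 4, is a worthwhile point of rigor but does not change the argument.
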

\begin{proof} This follows from Lemma 7  and Lemma 8. \end{proof}

\section{Effective non-vanishing results}
\subsection{Requiring that $a_0=1$ and a uniform lower bound for our class}

One type of result we would like to prove when generalizing the Vanishing Lemma is an explicit lower bound for
\begin{gather*}
  \int_T^{T+H} \abs{L(\sigma+it)} dt \geq K>0,
\end{gather*}
valid for all functions in a class $H^2(C,\sigma)$ or the corresponding class of absolutely convergent Dirichlet series, where $K$ only depends on $H$ and the norm $\norm{L}_1$ or $\norm{L}_2$. This is not possible due to the simple example in the classical Hardy class $H^2$ of Dirichlet series\footnote{A similar example can be shown for absolutely convergent Dirichlet series.}
\begin{gather*}
  L_N(s)= \frac 1 {\sqrt N} \sum_{n=N+1}^{2N} (-1)^n n^{-s}.
\end{gather*}
It is clear that $\norm{L_N}_2=1$, but it is also easy to show that
$$
 \lim_{N \to \infty} \int_0^H \abs{L_N\p{\frac 1 2+it}}^2 dt =0,
$$ 
for any $H>0$. Other examples can be given by using Voronin Universality. 

Thus it is clear that we can not find any lower bound that {\em only} depends on the class $H^2(C,\sigma)$. We will however manage to find a bound that depends on two different quantities of a particular Dirichlet series, namely its first coefficients $a_0$ assuming it is non-zero and its norm $\norm{L}_2$.

To simplify the statements of our results and remove their dependence on $a_0$ we will from now on  assume that $a_0=1$ for our Dirichlet series. It is clear that the general case can be transferred to this case.  Assume that $L(s)$ is a Dirichlet series in $H^2(C,\sigma)$ (or absolutely convergent on $\Re(s)=\sigma$)  that is not identically zero. Then there  exists a smallest $k$ such that $a_k \neq 0$. Consider
 $$
 \tilde L(s)=\frac{L(s)}{a_k e^{\lambda_k s}}=1+\sum_{n=1}^\infty
  \frac{a_{n+k}}{a_k} e^{-(\lambda_{n+k}-\lambda_k) s}= \sum_{n=0}^\infty
  \tilde a_n e^{-\tilde \lambda_n s},
$$ 
where $\tilde L(s)$  belong to $H^2(C,\sigma)$ (or absolutely convergent on $\Re(s)=\sigma$) and satisfies $\tilde a_0=1$. General properties for   $L(s)$ can now be deduced from properties for  $\tilde L(s)$.

\subsection{Nonvanishing on a half plane}

We will prove some nonvanishing results for a half plane that follows from Theorems 6,7,8 and 9. 
\begin{lem}
  Let $L_x$ be a Dirichlet series absolutely convergent for $x>0$ on $\Re(s)=\sigma$ with $a_0=1$. Then if
  \begin{gather*}
    \norm{L_x-1}_1 =1-\xi, \qquad (0<\xi<1) \\ \intertext{then}  
     \xi \leq \abs{L(s)} \leq 2-\xi, \qquad (\Re(s) \geq \sigma+x).
  \end{gather*}
\end{lem}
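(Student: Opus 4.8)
The plan is to read off both inequalities from a single estimate of $\abs{L(s)-1}$ on the closed half-plane $\Re(s)\ge\sigma+x$, obtained by the triangle inequality together with the monotonicity of $e^{-\lambda_n\Re(s)}$ in $\Re(s)$. Since $a_0=1$ we write $L(s)=1+\sum_{n=1}^\infty a_n e^{-\lambda_n s}$. By hypothesis $L_x$ is absolutely convergent on $\Re(s)=\sigma$, i.e. $\sum_{n=1}^\infty \abs{a_n}e^{-\lambda_n(\sigma+x)}<\infty$; since $\lambda_n>0$ for $n\ge1$ by the Dirichlet condition \eqref{dircon}, each term $e^{-\lambda_n\Re(s)}$ is decreasing in $\Re(s)$, so the series for $L$ converges absolutely and uniformly on the whole half-plane $\Re(s)\ge\sigma+x$. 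This is the only point that needs a word of justification; it legitimizes the term-by-term estimates below.

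First I would fix $s$ with $\Re(s)\ge\sigma+x$ and estimate
\begin{gather*}
  \abs{L(s)-1}\le\sum_{n=1}^\infty \abs{a_n}\,e^{-\lambda_n\Re(s)}\le\sum_{n=1}^\infty \abs{a_n}\,e^{-\lambda_n(\sigma+x)}=\norm{L_x-1}_1=1-\xi,
\end{gather*}
where the middle inequality uses $\lambda_n>0$ and $\Re(s)\ge\sigma+x$, and the identification of the sum with $\norm{L_x-1}_1$ is just the definition of the $L^1$-norm of the shifted series $L_x$ (whose $n$-th coefficient is $a_n e^{-\lambda_n x}$, with $n=0$ term equal to $1$ because $a_0=1$).

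Then I would finish by the forward and reverse triangle inequalities applied to $L(s)=1+(L(s)-1)$: throughout $\Re(s)\ge\sigma+x$ we get $\abs{L(s)}\le 1+\abs{L(s)-1}\le 2-\xi$ and $\abs{L(s)}\ge 1-\abs{L(s)-1}\ge\xi$. The hypothesis $0<\xi<1$ is exactly what makes the lower bound a genuine positive nonvanishing statement and keeps the upper bound below $2$. I do not expect any real obstacle here: all the substance sits in the earlier estimates on $\norm{L_x-1}_1$ (Theorems 6 and 7), and this lemma is simply the routine packaging of such an estimate into a pointwise nonvanishing conclusion on the half-plane.
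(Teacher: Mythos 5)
Your proof is correct and is exactly the argument the paper intends: the paper's own proof is the one-line remark that the lemma ``follows from the triangle inequality,'' and your writeup simply makes explicit the estimate $\abs{L(s)-1}\le\norm{L_x-1}_1=1-\xi$ on $\Re(s)\ge\sigma+x$ followed by the forward and reverse triangle inequalities. No gaps.
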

\begin{proof}
 This follows from the triangle inequality.
\end{proof}
\begin{rem}
  If the $\lambda_k$ are linearly independent over  $\mathbb Q$ then Lemma 9 gives the best possible estimate. In particular if $\norm{L_x}_1=2$, then  $$\sup_{L(s)=0} \Re(s)=x+\sigma.$$
\end{rem}

\begin{thm}
 Let $L(s)$ be an absolutely convergent Dirichlet series on $\Re(s)=\sigma$ such that $a_0=1$. Then 
\begin{gather*}
  \xi \leq \abs{L(s+it)} \leq  2-\xi, \qquad \qquad (0<\xi<1),
   \\ \intertext{for}  
   \Re(s) \geq  \sigma+\lambda_1^{-1} \log^+ \frac {\norm{L-1}_1}{1-\xi}.
\end{gather*} 
\end{thm}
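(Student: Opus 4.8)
The plan is to deduce the theorem from the exponential decay of the shifted $L^1$-norm (Theorem 6) together with the elementary perturbation bound behind Lemma 9. The whole argument reduces to making $\norm{L_x-1}_1$ small: for $\Re(s)\ge\sigma+x$ one has, by the triangle inequality and the estimate $e^{-\lambda_n\Re(s)}\le e^{-\lambda_n(\sigma+x)}$ (valid since $\lambda_n>0$),
\[
  \abs{L(s)-1}=\Bigl|\sum_{n=1}^\infty a_n e^{-\lambda_n s}\Bigr|\le\sum_{n=1}^\infty\abs{a_n}e^{-\lambda_n(\sigma+x)}=\norm{L_x-1}_1 ,
\]
so as soon as $\norm{L_x-1}_1\le 1-\xi$ we obtain $\xi\le\abs{L(s)}\le 2-\xi$ on that half-plane; this is exactly the mechanism of Lemma 9 (applied, if one wants to be pedantic, with $\xi$ replaced by $1-\norm{L_x-1}_1\ge\xi$).

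Next I would choose the shift $x=\lambda_1^{-1}\log^+\!\bigl(\norm{L-1}_1/(1-\xi)\bigr)\ge0$ and invoke Theorem 6, which gives $\norm{L_x-1}_1\le e^{-\lambda_1 x}\norm{L-1}_1$. If $\norm{L-1}_1\le 1-\xi$ then $x=0$ and the right-hand side is just $\norm{L-1}_1\le 1-\xi$; otherwise $e^{-\lambda_1 x}=(1-\xi)/\norm{L-1}_1$ and the right-hand side equals $1-\xi$ exactly. In either case $\norm{L_x-1}_1\le 1-\xi$, and the reduction of the first paragraph finishes the proof on $\Re(s)\ge\sigma+x$, which is precisely the half-plane claimed in the statement. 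The reading of the statement is that $t$ ranges over $\R$, i.e.\ the conclusion holds on the whole closed half-plane $\Re(s)\ge\sigma+\lambda_1^{-1}\log^+(\norm{L-1}_1/(1-\xi))$.

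I do not expect a genuine obstacle here, since the result is essentially a corollary of Theorem 6 and Lemma 9; the main subtlety is bookkeeping. One point to keep in mind is the degenerate case $\norm{L-1}_1\le 1-\xi$, which the $\log^+$ truncation handles automatically by forcing $x=0$ (and then the first display already gives $\abs{L(s)-1}\le\norm{L-1}_1\le 1-\xi$ on $\Re(s)\ge\sigma$). The other is the harmless boundary case $\abs{L(s)-1}=1-\xi$, for which the asserted inequalities $\xi\le\abs{L(s)}\le 2-\xi$ still hold, with equality permitted. Writing the proof therefore amounts to citing Theorem 6 to bound $\norm{L_x-1}_1$ and Lemma 9 (or the one-line triangle-inequality computation above) to pass from that bound to the two-sided estimate on $\abs{L}$.
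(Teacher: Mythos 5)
Your proof is correct and follows essentially the same route as the paper, which deduces the theorem directly from Theorem 6 (the bound $\norm{L_x-1}_1\le e^{-\lambda_1 x}\norm{L-1}_1$) combined with the triangle-inequality mechanism of Lemma 9; your explicit handling of the choice of $x$ and of the degenerate case $\norm{L-1}_1\le 1-\xi$ just fills in the bookkeeping the paper leaves implicit.
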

\begin{proof}
  This follows from Lemma 9 and Theorem 6.
\end{proof}

The corresponding result for the $H^2(C,\sigma)$ class will be somewhat more complicated since Theorem 7 is more complicated than Theorem 6.

\begin{thm}
 Let ,$L \in H^2(C,\sigma)$ such that $a_0=1$. Then
\begin{gather*}
  \xi \leq \abs{L(s+it)} \leq  2-\xi, \qquad \text{for} \qquad \Re(s) \geq \sigma+ x_\xi \qquad (0<\xi<1),
   \\ \intertext{where $x_\xi$ is the positive solution $x=x_\xi$ to}
     \sqrt{1+\frac C {2x}} e^{-Kx}\norm{L-1}_2 = 1 -\xi.  
  \\ \intertext{In particular we have that}
   x_\xi \leq \max\p{C, K^{-1} \log^+ \frac{\sqrt 3 \norm{L-1}_2} {\sqrt 2(1-\xi)}}
\end{gather*} 
where $K$ is defined by Theorem 7. Furthermore, for a particular Dirichlet series the constant $K$ can be replaced by $\lambda_1$.
\end{thm}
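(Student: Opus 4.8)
The plan is to reduce everything to Lemma 9 together with Theorem 7, and then extract the explicit bound on $x_\xi$ by splitting into two regimes according to which of the two factors $\sqrt{1+C/(2x)}$ and $e^{-Kx}$ controls the decay. First I would apply Lemma 9 in the form it is stated: if $\norm{L_x-1}_1 = 1-\xi$ with $0<\xi<1$, then $\xi \le \abs{L(s)} \le 2-\xi$ for $\Re(s)\ge\sigma+x$. So it suffices to find the threshold value of $x$ beyond which $\norm{L_x-1}_1 \le 1-\xi$. Since $L\in H^2(C,\sigma)$ and $a_0=1$, Theorem 7 gives $\norm{L_x-1}_1 \le \sqrt{1+C/(2x)}\,\norm{L-1}_2\,e^{-Kx}$ with $K$ as in Theorem 7, so the threshold $x_\xi$ is (at most) the positive solution of $\sqrt{1+C/(2x)}\,e^{-Kx}\norm{L-1}_2 = 1-\xi$. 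That the left-hand side is strictly decreasing in $x$ on $(0,\infty)$ (it is a product of two positive decreasing functions) and tends to $0$ as $x\to\infty$ and to $+\infty$ as $x\to 0^+$ guarantees existence and uniqueness of $x_\xi$, and monotonicity means $\norm{L_x-1}_1\le 1-\xi$ for all $x\ge x_\xi$; combined with Lemma 9 this proves the main assertion.

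For the explicit upper bound $x_\xi \le \max\!\bigl(C,\ K^{-1}\log^+\!\tfrac{\sqrt3\,\norm{L-1}_2}{\sqrt2(1-\xi)}\bigr)$ I would argue as follows. If $x \ge C$ then $C/(2x) \le 1/2$, hence $\sqrt{1+C/(2x)} \le \sqrt{3/2} = \sqrt3/\sqrt2$; therefore for such $x$ the inequality $\sqrt{1+C/(2x)}\,e^{-Kx}\norm{L-1}_2 \le 1-\xi$ is implied by $\tfrac{\sqrt3}{\sqrt2}\norm{L-1}_2 e^{-Kx} \le 1-\xi$, i.e. by $x \ge K^{-1}\log^+\!\tfrac{\sqrt3\,\norm{L-1}_2}{\sqrt2(1-\xi)}$. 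Consequently any $x$ with $x\ge C$ and $x \ge K^{-1}\log^+\!\tfrac{\sqrt3\,\norm{L-1}_2}{\sqrt2(1-\xi)}$ satisfies $\norm{L_x-1}_1\le 1-\xi$; taking the smallest such $x$, namely $\max$ of the two quantities, bounds $x_\xi$ from above by it. The $\log^+$ (rather than $\log$) correctly handles the case where $\norm{L-1}_2$ is already so small that $x_\xi$ might be governed purely by the $x\ge C$ constraint; one should check that when $\tfrac{\sqrt3}{\sqrt2}\norm{L-1}_2 \le 1-\xi$ the $\log^+$ term is $0$ and $x=C$ indeed works, which it does by the same estimate.

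The final sentence — that for a \emph{particular} Dirichlet series $K$ may be replaced by $\lambda_1$ — follows by running the identical argument but invoking Lemma 5 in place of Theorem 7: for a fixed $L\in H^2(\lambda_n,\sigma)$ one has $\norm{L_x-1}_1 \le \sqrt{1+C/(2x)}\,\norm{L-1}_2\,e^{-\lambda_1 x}$ directly (this is Lemma 5, since the left side of Lemma 5 dominates $\norm{L_x-1}_1$ by Cauchy–Schwarz exactly as in its proof, or one notes $\norm{\cdot}_1\le\norm{\cdot}$-type bound is already built into Theorem 7's derivation from Lemma 5), and $\lambda_1 \ge K$ by Lemma 3, so replacing $K$ by the larger $\lambda_1$ only improves the threshold. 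I do not expect a serious obstacle here; the only mild subtlety is being careful that the chain Lemma 5 $\Rightarrow$ Theorem 7 is what legitimizes substituting $\lambda_1$ for $K$ for an individual series (Theorem 7 deliberately weakened $\lambda_1$ to the class-uniform $K$ via Lemma 3), and that the monotonicity/existence claim for $x_\xi$ is stated for the actual equation appearing in the theorem. Everything else is bookkeeping with $\log^+$ and the two-regime split.
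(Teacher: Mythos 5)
Your proposal is correct and follows essentially the same route as the paper: Theorem 7 bounds $\norm{L_x-1}_1$, Lemma 9 converts this into the two-sided bound on $\abs{L}$, monotonicity of $\sqrt{1+C/(2x)}\,e^{-Kx}$ handles $x\ge x_\xi$, the bound $\sqrt{1+C/(2x)}\le\sqrt{3/2}$ for $x\ge C$ gives the explicit estimate, and Lemma 5 in place of Theorem 7 justifies replacing $K$ by $\lambda_1$. Your added verification of existence and uniqueness of $x_\xi$ is a small refinement the paper omits.
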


\begin{proof}
   By Theorem 7 we have that
   \begin{gather*}
 \norm{L_x-1}_1 \leq \sqrt{1+\frac {C}{2x}} \norm{L-1}_2 e^{-Kx},
\end{gather*} 
 By Lemma 9 it is now sufficient to prove
 \begin{gather*} 
  \sqrt{1+\frac {C}{2x}} e^{-Kx}  \norm{L-1}_2 \leq 1 -\xi, 
 \end{gather*}
for $x = x_\xi$.  Since the left hand side is a decreasing function in $x$ this implies  the first part of of Theorem 14. Part 2 of Theorem 14 follows from the fact that for $x \geq C$ we have that
$$ \sqrt{1+\frac {C}{2x}}  \leq \sqrt{1+1/2}=\sqrt{3/2}.$$

That $K$ can be replaced by $\lambda_1$ follows by using Lemma 5 instead of Theorem 7.
\end{proof}

\subsection{The logarithm in short intervals}

We will be interested in proving upper estimates for

\begin{lem}
  Assume that $L(s)$ is absolutely convergent Dirichlet series for $\Re(s)> \sigma$. Then
  \begin{gather*}
    \int_0^\delta  \log^{-}\abs{ L(s+it)} dt \leq  \pi \p{D+\frac {\delta^2} {4D}} \times  \frac D {\pi} \int_{-\infty}^\infty \frac{ \log^{-} \abs{L(s-\delta/2+it)}}{D^2+t^2}dt,
  \end{gather*}
  for $\Re(s)>\sigma$.
\end{lem}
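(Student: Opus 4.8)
The plan is to read the inequality as a soft comparison: the left side is the $\log^-\abs L$-mass of a short vertical segment, while the right side (up to the factor $\pi\p{D+\frac{\delta^2}{4D}}$) is the Poisson-type average $\frac D\pi\int_{-\infty}^\infty\frac{\,\cdot\,}{D^2+t^2}\,dt$ of $\log^-\abs L$ over the whole line, and the bound should fall out of the elementary fact that the kernel $t\mapsto\frac D{\pi\p{D^2+t^2}}$ is smallest, over a segment centred at $0$ of half-length $\delta/2$, at its endpoints. First I would translate so that the segment of integration becomes symmetric about its midpoint $w=s+\frac{i\delta}2$. Substituting $t\mapsto t+\delta/2$,
\[
  \int_0^\delta\log^-\abs{L(s+it)}\,dt=\int_{-\delta/2}^{\delta/2}\log^-\abs{L(w+it)}\,dt,
\]
where $\Re(w)=\Re(s)>\sigma$; since $L$ is analytic in a neighbourhood of this segment (if $L\equiv0$ both sides are $+\infty$ and there is nothing to prove), $h(t):=\log^-\abs{L(w+it)}$ is a nonnegative, locally integrable function of $t$.

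Next I would record the only real step, an estimate purely about the kernel: for $\abs t\le\delta/2$ we have $D^2+t^2\le D^2+\frac{\delta^2}4$, hence
\[
  \frac D{\pi\p{D^2+t^2}}\ \ge\ \frac D{\pi\p{D^2+\frac{\delta^2}4}}\ =\ \frac1{\pi\p{D+\frac{\delta^2}{4D}}}\qquad\p{\abs t\le\delta/2}.
\]
Multiplying by $h(t)\ge0$, integrating over $\abs t\le\delta/2$, and then enlarging the Poisson integral by the nonnegative contribution of $\abs t>\delta/2$ gives
\[
  \frac1{\pi\p{D+\frac{\delta^2}{4D}}}\int_{-\delta/2}^{\delta/2}h(t)\,dt\ \le\ \frac D\pi\int_{-\delta/2}^{\delta/2}\frac{h(t)}{D^2+t^2}\,dt\ \le\ \frac D\pi\int_{-\infty}^{\infty}\frac{h(t)}{D^2+t^2}\,dt.
\]
Multiplying through by $\pi\p{D+\frac{\delta^2}{4D}}$ and using the identity of the first step turns this into exactly the asserted inequality. (If the right-hand Poisson integral is infinite the statement is vacuous; if it is finite, the chain above also shows the left-hand side is finite.)

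I do not expect any genuine obstacle here: the entire content is the one-line inequality $D^2+t^2\le D^2+\frac{\delta^2}4$ on the centred segment together with the positivity of $\log^-$. The two points worth a word are the recentring about the midpoint, which is precisely what produces the constant $\frac{\delta^2}{4D}$ rather than the weaker $\frac{\delta^2}D$ one would obtain from the value of the kernel at an endpoint of $[0,\delta]$; and the fact that absolute convergence on $\Re(s)>\sigma$ enters \emph{only} to make $L$ --- and hence both integrals --- well defined, so that the same argument applies verbatim to any $L\in H^2(C,\sigma)$, where Theorem 4 and Lemma 7 (or Lemma 6) provide the required local integrability of $\log\abs L$.
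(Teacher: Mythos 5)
Your proof is correct and is essentially the paper's argument: the paper's entire proof is the remark that $\log^-\abs{L}$ is nonnegative, and you have simply supplied the implicit details (recentring the segment at its midpoint, the kernel bound $D^2+t^2\le D^2+\delta^2/4$ there, and positivity to extend the Poisson integral to the whole line). Your reading of the slightly ambiguous notation $L(s-\delta/2+it)$ as the Poisson average centred at the midpoint of the segment is the one forced by the constant $\delta^2/(4D)$, so no discrepancy there.
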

\begin{proof} This follows from the fact that $\log^- {\abs{L(s+it)}}$ is a positive function. \end{proof}

\begin{lem}
  Assume that $L(s)$ is  an  absolutely convergent Dirichlet series on $\Re(s)=\sigma$, so that $a_1=1$ and that $\xi \leq \abs{L(s)}$ for $\Re(s)\geq D+\sigma$. Then
  \begin{gather*}
    \int_0^\delta  \log^{-}\abs{ L(s+it)}dt \leq  \pi \p{D+\frac{\delta^2}{4D}} \p{\log \norm{L}_1 - \log \abs{L(\sigma+D+i\delta/2)}}.
\end{gather*}
  for $\Re(s)\geq \sigma$.
\end{lem}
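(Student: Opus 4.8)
The plan is to chain Lemma 10 with Theorem 12. First I would apply Lemma 10, recentering the integration interval $[0,\delta]$ at its midpoint so that the Cauchy kernel on the right is centred at $s+i\delta/2$: since $\log^{-}$ is nonnegative and $D^2+t^2\le D^2+\delta^2/4$ for $\abs{t}\le\delta/2$, one obtains
\begin{gather*}
 \int_0^\delta \log^{-}\abs{L(s+it)}\,dt \;\le\; \pi\p{D+\frac{\delta^2}{4D}}\cdot\frac D\pi\int_{-\infty}^\infty \frac{\log^{-}\abs{L(s+i\delta/2+it)}}{D^2+t^2}\,dt ,
\end{gather*}
valid for $\Re(s)\ge\sigma$ since $L$ is absolutely convergent on the closed half-plane $\Re(s)\ge\sigma$. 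It therefore suffices to bound the normalised Cauchy integral on the right by $\log\norm{L}_1-\log\abs{L(\sigma+D+i\delta/2)}$.

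Next I would apply Theorem 12, not to $L$ itself but to the imaginary translate $M(z)=L\p{z+i\operatorname{Im}(s)+i\delta/2}$, whose coefficients have the same absolute values as those of $L$; the normalised Cauchy integral of $\log^{-}\abs{M}$ on the line $\Re(z)=\Re(s)$ is precisely the integral just obtained. Because $\Re(s)\ge\sigma$ and $\lambda_n\ge0$, the series $M$ is absolutely convergent on $\Re(z)=\Re(s)$ with $\norm{M}_1=\sum_n\abs{a_n}e^{-\lambda_n\Re(s)}\le\norm{L}_1$ (Theorem 5); moreover $a_0=1$ and $\lambda_0=0$ give $\norm{L}_1\ge\abs{a_0}=1$, so the positive part may be dropped. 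Finally, the point at which Theorem 12 evaluates $M$ is $\Re(s)+D$, i.e.\ the point $s+D+i\delta/2$ of $L$, which has real part $\ge D+\sigma$; by the hypothesis $\xi\le\abs{L(s)}$ on $\Re(s)\ge D+\sigma$ it is not a zero of $L$, so Theorem 12 applies and yields
\begin{gather*}
 \frac D\pi\int_{-\infty}^\infty \frac{\log^{-}\abs{L(s+i\delta/2+it)}}{D^2+t^2}\,dt \;\le\; \log\norm{L}_1-\log\abs{L(s+D+i\delta/2)} .
\end{gather*}
Combining the two displays gives the assertion, the evaluation point being $\sigma+D+i\delta/2$ in the case $s=\sigma$ relevant to the applications.

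I do not expect a substantial obstacle here: the argument is a direct concatenation of Lemma 10 and Theorem 12, and every step is elementary once those are granted. The only points that need care are (i) the recentring of $[0,\delta]$ in the use of Lemma 10, so that the kernel sits symmetrically about the midpoint $s+i\delta/2$ — this is what produces the factor $D+\delta^2/(4D)$ — and (ii) the passage from Theorem 12, stated on the line $\Re(s)=\sigma$, to a parallel line through an arbitrary point of $\Re(s)\ge\sigma$, which needs only an imaginary translation of $L$ but forces one to check that the shifted evaluation point still lies in the nonvanishing region $\Re(s)\ge D+\sigma$ and that the $L^1$-norm does not increase.
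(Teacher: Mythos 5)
Your proof is correct and is essentially the paper's own argument: the proof given there is exactly the concatenation of Lemma 10 and Theorem 12 together with the observation that the normalization $a_0=1$ (with $\lambda_0=0$) gives $\norm{L}_1\geq 1$, hence $\log^+\norm{L}_1=\log\norm{L}_1$. You have merely made explicit the recentring of the Cauchy kernel at $i\delta/2$ and the imaginary translation needed to apply Theorem 12 away from the line $\Re(s)=\sigma$, details the paper leaves implicit (including the fact that the natural evaluation point is $s+D+i\delta/2$, reducing to the stated $\sigma+D+i\delta/2$ when $s=\sigma$).
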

\begin{proof}
  This follows from  Theorem 12 and Lemma 10 and by using the fact that if $a_1=1$ then $\norm{L}_1 \geq 1$ and thus  $\log^+ \norm{L}_1 =\log \norm{L}_1$.
\end{proof}

\begin{lem}
  Assume that $L(s) \in H^2(C,\sigma)$ and that $\xi<\abs{L(s)}$  for $\Re(s)\geq D+\sigma$. Then
  \begin{multline*}
    \int_0^\delta  \log^{-}\abs{ L(s+it)}dt \leq \\ \leq \pi \p{D+\frac{\delta^2}{4D}} \p{\kappa+\log\p{1+\frac{3 \pi C}D}+ \log \norm{L}_2 - \log \abs{L(\sigma+D+i\delta/2)}},
\end{multline*}
  for $\Re(s)\geq \sigma$, where $\kappa$ is defined by Theorem 10.
\end{lem}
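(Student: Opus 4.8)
The plan is to obtain Lemma~12 from Theorem~11 and Lemma~10, exactly as Lemma~11 was obtained from Theorem~12 and Lemma~10; the only change is that the $H^2(C,\sigma)$ estimate Theorem~11 carries the extra terms $\kappa+\frac12\log\p{1+\frac{3\pi C}{D}}$ and uses $\log\norm{L}_2$ in place of $\log^+\norm{L}_1$. Recall that every $L\in H^2(C,\sigma)$ is absolutely convergent on $\Re(s)>\sigma$, so Lemma~10 applies and, with the segment $[s,s+i\delta]$ centred at its midpoint $s+i\delta/2$, gives for $\Re(s)>\sigma$
\[
  \int_0^\delta \log^- \abs{L(s+it)}\,dt \le \pi\p{D+\frac{\delta^2}{4D}}\cdot\frac{D}{\pi}\int_{-\infty}^\infty \frac{\log^-\abs{L\p{s+i\delta/2+it}}}{D^2+t^2}\,dt ,
\]
the elementary point being that $\log^-$ is non-negative, that $1/(D^2+u^2)\ge 1/(D^2+\delta^2/4)$ for $\abs{u}\le\delta/2$, and that extending the integral to all of $\R$ only enlarges it.

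First I would treat the case $\Re(s)=\sigma$. Apply Theorem~11 to the vertical translate $\widetilde L(s)=L\p{s+i\delta/2}$. This series still belongs to $H^2(C,\sigma)$ with the \emph{same} norm: condition \eqref{Cdef} involves only the frequencies $\lambda_n$, and $\abs{a_n e^{-i\lambda_n\delta/2}}=\abs{a_n}$, so $\norm{\widetilde L}_2=\norm{L}_2$. The hypothesis $\xi<\abs{L(s)}$ on $\Re(s)\ge D+\sigma$ (together with $0<\xi$) forces $\widetilde L(D+\sigma)=L\p{D+\sigma+i\delta/2}\ne 0$, which is precisely what is needed to invoke Theorem~11 at height $D$. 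Theorem~11 then bounds the logarithmic integral in the display by $\kappa+\frac12\log\p{1+\frac{3\pi C}{D}}+\log\norm{L}_2-\log\abs{L\p{\sigma+D+i\delta/2}}$, and since $3\pi C/D>0$ one has $\frac12\log\p{1+\frac{3\pi C}{D}}\le\log\p{1+\frac{3\pi C}{D}}$; inserting this into the display yields Lemma~12 for $\Re(s)=\sigma$. For a point with $\Re(s)=\sigma+y>\sigma$ one runs the same argument for the shift $L_y$, which lies in $H^2(C,\sigma)$ with $\norm{L_y}_2\le\norm{L}_2$ by Theorem~5 and \eqref{Cdef}.

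I do not expect a genuine obstacle here: the analytic content — the half-plane Jensen inequality of Lemma~6 and the Montgomery--Vaughan estimate of Theorem~4, packaged into Theorem~11 — is already available, and Lemma~10 is a one-line pointwise comparison. The only points that need care are the bookkeeping around the shift by $i\delta/2$ (checking that it leaves the class and the norm $\norm{L}_2$ unchanged, and that it moves the distinguished evaluation point to $\sigma+D+i\delta/2$, so as to match the $-\log\abs{L(\sigma+D+i\delta/2)}$ term of Theorem~11) and the implicit non-negativity requirement inherited from Theorem~10 through Theorem~11, which is harmless once one normalizes $a_0=1$ as in the surrounding results, for then $\norm{L}_2\ge 1$ and the bracketed quantity is automatically $\ge 0$.
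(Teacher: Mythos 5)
Your proposal is correct and follows the paper's own route: the paper's proof is exactly ``combine Lemma 10 with Theorem 11'' (using $a_0=1$ to ensure $\norm{L}_2\geq 1$), and your write-up supplies the same bookkeeping — the recentring at $s+i\delta/2$, the invariance of the $H^2(C,\sigma)$ data under vertical translation, and the absorption of the harmless factor $\tfrac12$ in $\tfrac12\log(1+3\pi C/D)\leq\log(1+3\pi C/D)$.
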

\begin{proof}
  This follows from Combining Theorem 11 and Lemma 10 and by using the fact that if $a_1=1$ then $\norm{L}_2 \geq 1$ and thus $\log^+ \norm{L}_2 =\log \norm{L}_2$.
\end{proof}

\begin{thm} Let $L(s)$ be an absolutely convergent Dirichlet series on $\Re(s)=\sigma$ such that $a_0=1$. Then
 \begin{align*}
    i)&  &  \int_T^{T+\delta} \log^{-} \abs{L(\sigma+it)}dt &\leq \pi \p{\frac{(\log \norm{L}_1+\log 2)^2}{\lambda_1}+\delta^2 \lambda_1}, \\
 ii)& & \int_T^{T+\delta} \log^+  \abs{L(\sigma+it)}dt &\leq \delta \log \norm{L}_1. 
\end{align*}
\end{thm}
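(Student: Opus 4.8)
The plan is to prove the two parts separately, using the machinery already set up. For part (ii), the estimate is essentially immediate: by the triangle inequality used in the proof of Lemma 8 we have $\log^+\abs{L(\sigma+it)} \leq \log^+\norm{L}_1$ pointwise, and since $a_0 = 1$ forces $\norm{L}_1 \geq 1$, we have $\log^+\norm{L}_1 = \log\norm{L}_1$. Integrating this pointwise bound over an interval of length $\delta$ gives the claim. So part (ii) needs only a sentence.

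For part (i), the idea is to combine Lemma 11 (the short-interval bound on $\log^-$) with Theorem 13 (nonvanishing on a half plane), and then optimize the free parameter $D$. First I would apply Theorem 13, in the normalized form where we may take $\xi$ close to $1$: since $a_0 = 1$, Theorem 13 shows that $\abs{L(s+it)} \geq \xi$ for $\Re(s) \geq \sigma + \lambda_1^{-1}\log^+\tfrac{\norm{L-1}_1}{1-\xi}$. One then wants $\abs{L(\sigma+D+i\delta/2)}$ bounded below and $D$ not too large; the natural choice is to take $D = \lambda_1^{-1}\log^+\tfrac{\norm{L-1}_1}{1-\xi}$ (or a convenient round version), so that on the relevant vertical line $\log\abs{L(\sigma+D+i\delta/2)} \geq \log\xi$. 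Feeding this into Lemma 11 gives
\begin{gather*}
\int_T^{T+\delta}\log^-\abs{L(\sigma+it)}\,dt \leq \pi\p{D + \frac{\delta^2}{4D}}\p{\log\norm{L}_1 - \log\xi}.
\end{gather*}
Now one picks $\xi$ (say $\xi = 1/2$, which is why the $\log 2$ appears) and bounds $\norm{L-1}_1 \leq \norm{L}_1 + 1 \leq 2\norm{L}_1$ — or rather uses $\log^+\tfrac{\norm{L-1}_1}{1-\xi}$ directly — so that $D \leq \lambda_1^{-1}(\log\norm{L}_1 + \log 2)$, roughly. Then $\log\norm{L}_1 - \log\xi = \log\norm{L}_1 + \log 2$, so the first factor contributes $\pi D(\log\norm{L}_1+\log 2) \leq \pi\lambda_1^{-1}(\log\norm{L}_1+\log 2)^2$, and the second term $\pi\tfrac{\delta^2}{4D}(\log\norm{L}_1+\log 2)$; bounding $\tfrac{1}{4D}(\log\norm{L}_1+\log 2) \leq \tfrac{\lambda_1}{4} \cdot$ something, one should land on $\pi\delta^2\lambda_1$ after absorbing constants, or one can simply replace $D$ in the $\delta^2/(4D)$ term by a lower bound to get $\delta^2\lambda_1$ directly. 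The arithmetic of matching constants exactly is the fiddly part, and it may require choosing $D$ slightly differently (e.g. $D = \max(\lambda_1^{-1}(\log\norm{L}_1+\log 2), \text{const})$) to make both terms come out cleanly.

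The main obstacle is the bookkeeping in part (i): one has to interface Theorem 13 (which gives a lower bound $\xi$ on $\abs{L}$ valid only for $\Re(s) \geq \sigma + D$ with $D$ depending on $\xi$ and $\norm{L-1}_1$) with Lemma 11 (which needs exactly such a lower bound at the single point $\sigma + D + i\delta/2$), and then choose the parameters $\xi$ and $D$ so that the resulting expression collapses to the stated clean form $\pi(\lambda_1^{-1}(\log\norm{L}_1+\log 2)^2 + \delta^2\lambda_1)$. In particular one must be slightly careful that $\log\abs{L(\sigma+D+i\delta/2)}$ can be negative only down to $\log\xi$, which is where the positivity of $D$ and the precise constant in Theorem 13 enter; replacing $\norm{L-1}_1$ by the cruder $2\norm{L}_1$ and taking $\xi = 1/2$ is the cleanest route, and the two terms of Lemma 11's prefactor $\pi(D + \delta^2/(4D))$ then split exactly into the two terms of the claim once one notes $D \cdot (\log\norm{L}_1 + \log 2) \leq \lambda_1^{-1}(\log\norm{L}_1+\log 2)^2$ and bounds the $\delta^2$-term by replacing $D$ in the denominator with $\lambda_1^{-1} \cdot (\text{lower bound})$ — or, if $D$ happens to be smaller than some absolute constant, by handling that degenerate range of $\norm{L}_1$ trivially via part (ii)-type bounds.
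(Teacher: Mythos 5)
Your proposal is correct and follows essentially the same route as the paper: part (ii) from the pointwise bound $\log^+\abs{L}\leq\log^+\norm{L}_1$, and part (i) from Theorem 13 with $\xi=1/2$, $\norm{L-1}_1\leq\norm{L}_1$, and $D=\lambda_1^{-1}(\log\norm{L}_1+\log 2)$ fed into Lemma 11. The bookkeeping you worry about resolves itself: with that exact $D$ the factor $\log\norm{L}_1-\log(1/2)$ equals $\lambda_1 D$, so $\pi\p{D+\delta^2/(4D)}\lambda_1 D=\pi\p{\lambda_1 D^2+\delta^2\lambda_1/4}$, which is even slightly stronger than the stated bound, and since $\norm{L}_1\geq 1$ one always has $D\geq\log 2/\lambda_1>0$, so no degenerate case arises.
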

\begin{proof}
$i)$ By Theorem 13  with $\xi=1/2$ \footnote{This choice of $\xi$ is somewhat arbitrary and chosen since it allows a simple treatment. A more optimal choice of $\xi$ can be found with more work.} and the fact that $\norm{L-1}_1 \leq \norm{L}_1$, we see that that
\begin{gather} \label{uiy}
 -\log \abs{L(\sigma+s)} \leq \log(1/2), \qquad \text{for} \qquad
   \Re(s) \geq D+\sigma, \\ \intertext{where} \label{rest}
   D=\frac {\log{\norm{L}_1-\log(1/2)}} {\lambda_1}  =\frac {\log{\norm{L}_1+\log 2}} {\lambda_1}. 
\end{gather}
 Applying Lemma 11  we obtain
\begin{gather*}
    \int_0^\delta  \log^{-}\abs{ L(s+it)}dt \leq  \pi \p{D+\frac{\delta^2}{4D}} \p{\log \norm{L}_1 - \log (1/2)}.
\end{gather*}
We notice that the last parenthesis equals $\lambda_1 D$ and the result follows from simplifying. 

$ii)$. This follows from Eq. \eqref{xxx} in the same way as Lemma 8.
 \end{proof}

\begin{thm} Let $L \in H^2(C,\sigma)$ and $a_0=1$. Then
 \begin{align*}
 i)&  &     \int_T^{T+\delta} \log^{-} \abs{L(\sigma+it)}dt &\leq \pi \p{\frac{(\log \norm{L}_2+1)^2} K+ \delta^2 K}, \\
 ii)&  & \int_T^{T+\delta} \log^{+} \abs{L(\sigma+it)}dt &\leq \delta \log\p{1+\frac {3 \pi C}{\delta}}+\delta \log \norm{L}_2, \end{align*}
where $K$ is defined as in Theorem 7.Furthermore $K$ may be replaced by $\lambda_1$.
\end{thm}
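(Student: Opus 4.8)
The plan is to follow the proof of Theorem 15 line by line, but with the $H^2(C,\sigma)$-machinery in place of the absolutely convergent machinery: Theorem 14 instead of Theorem 13, Lemma 12 instead of Lemma 11, and Theorem 11 (via Lemma 10) instead of Theorem 12. I would dispose of the easier part $ii)$ first. Writing $\log^+\abs{L}=\tfrac12\log^+\abs{L}^2$ and applying the Jensen-type inequality \eqref{jensineq2} with the uniform probability measure $\mu(t)=\delta^{-1}$ on $[T,T+\delta]$ to the function $\abs{L(\sigma+it)}^2$, one gets
\begin{gather*}
  \frac1\delta \int_T^{T+\delta} \log^+ \abs{L(\sigma+it)}\,dt \le \frac12 \log^+\!\p{\frac1\delta \int_T^{T+\delta} \abs{L(\sigma+it)}^2\,dt}.
\end{gather*}
Now apply Theorem 4 to the shifted series $L(s+iT)=\sum_{n=0}^\infty a_n e^{-i\lambda_n T}e^{-\lambda_n s}$, which again belongs to $H^2(C,\sigma)$ with the same norm and the same $C$ (multiplying the coefficients by unimodular constants changes neither). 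This bounds the inner integral by $(\delta+3\pi C)\norm{L}_2^2$; since $\norm{L}_2\ge\abs{a_0}=1$ the outer $\log^+$ is just $\log$, and $\sqrt{1+3\pi C/\delta}\le 1+3\pi C/\delta$ gives $ii)$.

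For part $i)$, fix $\xi=\tfrac12$ and set $D=(\log\norm{L}_2+1)/K$ (possibly enlarged by an absolute constant — see below). First I would verify that $\abs{L(\sigma+s)}\ge\tfrac12$ for $\Re(s)\ge\sigma+D$: the equation $e^{-\sigma K}=CK$ from the proof of Lemma 3 shows $CK\le 1$, so $D\ge 1/K\ge C$ and hence $\sqrt{1+C/(2D)}\le\sqrt{3/2}$; Theorem 7 then gives $\norm{L_D-1}_1\le\sqrt{3/2}\,\norm{L-1}_2 e^{-KD}\le\sqrt{3/2}\,e^{-1}<\tfrac12$, and Lemma 9 yields the claimed lower bound. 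Then I apply Lemma 12 with this $D$ (after shifting by $iT$ as in part $ii)$), use $-\log\abs{L(\sigma+D+i(T+\delta/2))}\le\log 2$, and bound the resulting bracket by $KD$; since $\pi(D+\delta^2/(4D))\cdot KD=\pi(KD^2+K\delta^2/4)$ and $KD\le\log\norm{L}_2+1$, this gives
\begin{gather*}
  \int_T^{T+\delta} \log^- \abs{L(\sigma+it)}\,dt \le \pi\p{\frac{(\log\norm{L}_2+1)^2}{K}+\delta^2 K}.
\end{gather*}
Finally, that $K$ may be replaced by $\lambda_1$ follows by repeating the argument with Lemma 5 in place of Theorem 7, exactly as in the last sentence of the proof of Theorem 14, which is legitimate because $\lambda_1\ge K$ by Lemma 3.

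The main obstacle is the constant bookkeeping in part $i)$. In the absolutely convergent case the bracket produced by Lemma 11 is exactly $\log\norm{L}_1-\log\abs{L}$, which matches $\lambda_1 D$ on the nose once one uses $\abs{L}\ge\tfrac12$; the $H^2(C,\sigma)$ bracket coming from Lemma 12 (Theorem 11) instead carries the extra summands $\kappa$ and $\tfrac12\log(1+3\pi C/D)$. The choice $D=(\log\norm{L}_2+1)/K$ is calibrated so that $\kappa+\log2=0.966\ldots<1$ leaves just enough room, and the inequality $CK\le1$ (equivalently $D\ge C$, whence $3\pi C/D\le 3\pi$) is what one uses to keep the residual $\tfrac12\log(1+3\pi C/D)$ from spoiling this; making this last estimate airtight — possibly at the cost of enlarging $D$ by an absolute constant — is the only genuinely delicate point. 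Everything else is the same Poisson-kernel truncation (Lemma 10) and half-plane nonvanishing already in place for Theorem 15.
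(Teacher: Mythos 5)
Your route is the same as the paper's: establish $\abs{L(\sigma+s)}\geq\xi$ for $\Re(s)\geq\sigma+D$ with $D=(\log\norm{L}_2+1)/K$ via Theorem 7 and Lemma 9, feed that $D$ into Lemma 12 after a vertical shift, and simplify using $KD=\log\norm{L}_2+1$. Part $ii)$ is exactly the paper's one-line appeal to Theorem 4 and \eqref{jensineq2} written out, and your observation that $D\geq 1/K\geq C$ (from $e^{-\sigma K}=CK\leq 1$) is correct and is the same fact the paper extracts from the Lambert $W$-function.

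The ``only genuinely delicate point'' you defer is, however, a genuine gap, and it cannot be closed as stated. After Lemma 12 the bracket is $\kappa+\log\p{1+3\pi C/D}+\log\norm{L}_2+\log 2$ (with your $\xi=1/2$), and to reach the claimed bound you must dominate it by $KD=\log\norm{L}_2+1$, i.e.\ you need $\log\p{1+3\pi C/D}\leq 1-\kappa-\log 2\approx 0.033$, which forces $D\gtrsim 280\,C$. The only lower bound available is $D\geq C$, which gives $\log\p{1+3\pi C/D}\leq\log(1+3\pi)\approx 2.3$, two orders of magnitude too large; and enlarging $D$ by an absolute factor $c$ inflates $KD^2$ to $c$ times $(\log\norm{L}_2+1)^2/K$, which is no longer the stated inequality. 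You should know that the paper's own proof has the identical defect: it applies Lemma 12 and then silently omits the $\log\p{1+3\pi C/D}$ term from the bracket, and its slack ($1-\kappa$ minus roughly $0.6$, i.e.\ about $0.13$) is likewise far below $\log(1+3\pi)$. So you have correctly isolated the step on which the theorem's constants actually hinge, but neither your argument nor the paper's establishes part $i)$ with the constants as written; only a version with a worse absolute constant in front of $(\log\norm{L}_2+1)^2/K$ follows from this method.
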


\begin{proof}
 $i)$ By Theorem 14  with the choice $\xi=1-\sqrt 3/(e \sqrt 2)$ \footnote{This choice of $\xi$ is again somewhat arbitrary - we might calculate a more optimal $\xi$ in a later version of this paper} and the fact that $\norm{L-1}_2 \leq \norm{L}_2$, we see that that
\begin{gather} \notag 
 -\log \abs{L(\sigma+s)} \leq -\log\p{1-\frac{\sqrt 3}{e \sqrt 2}}=0.549, \, \, \, \text{for} \, \, \,
   \Re(s) \geq \max(D,C)+\sigma, \\ \intertext{where} \label{rest555}
   D=\frac {\log{\norm{L}_1+1 }} {K}. 
\end{gather}
We remark that by the definition of $K$ and properties of Lambert's $W$-function\footnote{clear but should maybe find reference}, see Lemma 3 it is clear that
\begin{gather*} 
  \frac 1 K \geq \frac 1 C,
\end{gather*} 
and thus $D \geq C$ and  $\max(C,D)=D$.  Applying Lemma 12  we obtain
\begin{gather} \label{hui}
    \int_0^\delta  \log^{-}\abs{ L(s+it)}dt \leq  \pi \p{D+\frac{\delta^2}{4D}} \p{\kappa +\log \norm{L}_2 +0.549}.
\end{gather}
where $\kappa=0.274$ is defined by Theorem 10. Calculation shows that
$$ \kappa+0.549 =0.822  \leq 1.$$
Thus the integral \eqref{hui} can be bounded by
 $$\p{D+\frac{\delta^2}{4D}} \p{\norm{L}_2 +1}=K \p{D^2+\frac{\delta^2} 4}.$$
The fact that  $K$ may be replaced by $\lambda_1$ follows from the fact that  $K$ may be replaced by $\lambda_1$ in Theorem 14.

$ii)$ This follows from Theorem 4 and Jensen's inequality \eqref{jensineq2}.
\end{proof}

\subsection{Sup-norm in short intervals}

Theorems 15 and 16 implies immediately the corresponding results for sup-norm.
\begin{thm} Let $L(s)$ be an absolutely convergent Dirichlet series on $\Re(s)=\sigma$ such that $a_0=1$. Then
 \begin{gather*}
    \inf_T \max_{t \in [T,T+\delta]} \abs{L(\sigma+it)}\geq \exp\p{-\pi \p{\frac{(\log \norm{L}_1+\log 2)^2}{\lambda_1 \delta}+\delta \lambda_1}}.
\end{gather*}
\end{thm}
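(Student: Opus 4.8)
The plan is to deduce Theorem 17 directly from part $i)$ of Theorem 15 together with a one-line monotonicity observation, since all of the analytic content has already been packaged into Theorem 15. Fix $T$ and set $M = \max_{t \in [T,T+\delta]} \abs{L(\sigma+it)}$. Because $x \mapsto \log^- x$ is non-increasing on $(0,\infty)$ and $\abs{L(\sigma+it)} \le M$ for every $t \in [T,T+\delta]$, we have $\log^- \abs{L(\sigma+it)} \ge \log^- M$ throughout the interval, and hence
\[
  \int_T^{T+\delta} \log^- \abs{L(\sigma+it)}\,dt \ \ge\ \delta\,\log^- M .
\]

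Next I would invoke Theorem 15 $i)$, whose hypotheses coincide with those of Theorem 17, to bound the left-hand side above by $\pi\bigl((\log\norm{L}_1+\log 2)^2/\lambda_1 + \delta^2\lambda_1\bigr)$. Dividing by $\delta$ gives
\[
  \log^- M \ \le\ \pi\p{\frac{(\log\norm{L}_1+\log 2)^2}{\lambda_1\delta} + \delta\lambda_1}.
\]
Then I would use the trivial bound $\log^- M \ge -\log M$, so that $-\log M$ is dominated by the same quantity; exponentiating yields $M \ge \exp\bigl(-\pi(\cdots)\bigr)$. Since the right-hand side is independent of $T$, taking the infimum over $T$ gives the claimed estimate.

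There is essentially no obstacle here: the passage through Lemma 6, the Montgomery--Vaughan input via Theorem 4, and the Jensen-type inequality are all already absorbed into Theorem 15, exactly as the sentence preceding Theorem 17 anticipates. The only points deserving a moment's care are the sign conventions for $\log^{\pm}$ and the observation that one uses $\log^- M \ge -\log M$ rather than equality — equality fails precisely when $M \ge 1$, but in that case the conclusion is trivially true since the exponential on the right is less than $1$. Accordingly I expect the write-up to be only a few lines.
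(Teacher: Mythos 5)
Your argument is correct and is exactly the deduction the paper intends: the paper gives no separate proof of Theorem 17 beyond the remark that it follows immediately from Theorem 15~$i)$, and your chain $\delta\log^- M \le \int_T^{T+\delta}\log^-\abs{L(\sigma+it)}\,dt \le \pi\bigl((\log\norm{L}_1+\log 2)^2/\lambda_1+\delta^2\lambda_1\bigr)$ followed by $\log^- M\ge -\log M$ and exponentiation is that deduction spelled out. Your handling of the sign convention (here $\log^- x=\max(0,-\log x)\ge 0$) and of the trivial case $M\ge 1$ is also right.
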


\begin{thm} Let $L \in H^2(C,\sigma)$ and $a_0=1$. Then
 \begin{gather*}
    \inf_T \max_{t \in [T,T+\delta]} \abs{L(\sigma+it)}  \geq \exp \p{-\pi \p{\frac{(\log \norm{L}_2+1)^2} {K \delta}+ K \delta}},  
\end{gather*}
where $K$ is defined as in Theorem 7. Furthermore $K$ may be replaced by $\lambda_1$.
\end{thm}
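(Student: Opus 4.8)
The plan is to obtain Theorem 18 as an immediate corollary of part $i)$ of Theorem 16, in exactly the way Theorem 17 is obtained from Theorem 15. Fix $T$ and write $M_T = \max_{t \in [T,T+\delta]} \abs{L(\sigma+it)}$; since $a_0 = 1$ the function $L$ is not identically zero, so $M_T > 0$ and $\log(1/M_T) = \min_{t \in [T,T+\delta]} \log\p{1/\abs{L(\sigma+it)}}$. The only ingredient beyond Theorem 16 is the elementary inequality $\log^- \abs{L(\sigma+it)} = \max\p{0,\,-\log\abs{L(\sigma+it)}} \geq -\log\abs{L(\sigma+it)}$, valid for every $t$, together with the fact that the minimum of the nonnegative function $\log^-\abs{L(\sigma+it)}$ over $[T,T+\delta]$ is at most its average over that interval.

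Combining these, I would write
$$\log\frac{1}{M_T} \;=\; \min_{t \in [T,T+\delta]} \log\frac{1}{\abs{L(\sigma+it)}} \;\leq\; \min_{t \in [T,T+\delta]} \log^-\abs{L(\sigma+it)} \;\leq\; \frac{1}{\delta}\int_T^{T+\delta} \log^-\abs{L(\sigma+it)}\,dt,$$
and then invoke Theorem 16 $i)$ to bound the last integral by $\pi\p{\frac{(\log\norm{L}_2+1)^2}{K} + \delta^2 K}$. Dividing by $\delta$ gives $\log(1/M_T) \leq \pi\p{\frac{(\log\norm{L}_2+1)^2}{K\delta} + K\delta}$; exponentiating yields $M_T \geq \exp\p{-\pi\p{\frac{(\log\norm{L}_2+1)^2}{K\delta} + K\delta}}$, and since the right-hand side is independent of $T$, taking the infimum over $T$ finishes the proof of the main inequality. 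The concluding assertion — that $K$ may be replaced by $\lambda_1$ — is inherited verbatim, since the same replacement is already available in Theorem 16 $i)$.

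There is essentially no obstacle here: all the analytic work (the logarithmic-integral estimate, Jensen's inequality in the half-plane via Lemma 6, the Montgomery--Vaughan input through Theorem 4, and the half-plane nonvanishing from Theorem 14) is already packaged into Theorem 16, and the passage from an $L^1$ upper bound on $\log^-\abs{L}$ over a short interval to a pointwise lower bound for $\max\abs{L}$ on that interval is a one-line real-variable argument. The single point deserving a moment's attention is the sign convention: one must use the nonnegative normalization of $\log^-$ so that $\log^-\abs{L} \geq -\log\abs{L}$ pointwise. If a sharper constant were wanted, one would, as in the footnotes to Theorems 15 and 16, replace the fixed choice $\xi = 1 - \sqrt{3}/(e\sqrt{2})$ by an optimized $\xi$; this affects the numerical constants but not the structure of the argument.
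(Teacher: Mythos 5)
Your proposal is correct and matches the paper's intent exactly: the paper offers no separate proof of Theorem 18, stating only that it follows immediately from Theorem 16, and your chain $\log(1/M_T)=\min_t(-\log|L|)\le\min_t\log^-|L|\le\frac1\delta\int_T^{T+\delta}\log^-|L|\,dt$ is precisely the intended one-line deduction. The sign convention you flag (taking $\log^-$ nonnegative so that $-\log|L|\le\log^-|L|$ pointwise) is indeed the only point requiring care, and you handle it correctly.
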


\subsection{$L^p$-norm case in short intervals}
By using Jensen's inequality, Eq. \eqref{jensineq2} on $$
f(t)=\abs{L(\sigma+it)}^p$$ we also get $L^p$-norm variants of Theorems 15 and 16.

\begin{thm}Let $L(s)$ be an absolutely convergent Dirichlet series on $\Re(s)=\sigma$ such that $a_0=1$. Then 
 \begin{gather*}
     \inf_T \p{\frac 1 \delta \int_T^{T+\delta} \abs{L(\sigma+it)}^p dt}^{1/p}  \geq \exp \p{-\pi \p{\frac{(\log \norm{L}_2+\log 2)^2} {\lambda_1 \delta}+  \lambda_1 \delta}}. 
\end{gather*}
\end{thm}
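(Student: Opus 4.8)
The plan is to derive the $L^p$ bound as a formal consequence of the short-interval logarithmic estimates behind Theorems 15--18, the decisive point being that the concave form of Jensen's inequality collapses the whole family of $L^p$-means onto a single quantity that does not depend on $p$ at all. Concretely, I would apply the inequality \eqref{jensineq2} in the form valid for an arbitrary concave function --- here simply $\log$ --- taking the normalised measure $\mu(t)=1/\delta$ on $[T,T+\delta]$ and the nonnegative integrand $\abs{L(\sigma+it)}^p$. Since $\log$ is concave this gives $\tfrac1\delta\int_T^{T+\delta}\log\p{\abs{L(\sigma+it)}^p}\,dt\leq \log\p{\tfrac1\delta\int_T^{T+\delta}\abs{L(\sigma+it)}^p\,dt}$.

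Pulling the exponent $p$ out of the left-hand logarithm, dividing by $p$, and exponentiating turns this into a lower bound for the $L^p$-mean by the geometric mean; then writing $\log\abs{L}=\log^+\abs{L}-\log^-\abs{L}$ and dropping the nonnegative term $\int_T^{T+\delta}\log^+\abs{L(\sigma+it)}\,dt$ yields
\begin{gather*}
  \p{\frac1\delta\int_T^{T+\delta}\abs{L(\sigma+it)}^p\,dt}^{1/p}\geq \exp\p{-\frac1\delta\int_T^{T+\delta}\log^{-}\abs{L(\sigma+it)}\,dt}.
\end{gather*}
The right-hand side is manifestly independent of $p$, so the infimum over $T$ is governed entirely by the short-interval behaviour of $\log^{-}\abs{L}$, which is exactly the object estimated in the previous subsection. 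This step is what makes the theorem a consequence of the earlier work rather than something genuinely new, and it also explains why the conclusion is uniform in $p$.

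It then remains to feed in the short-interval bound for $\int_T^{T+\delta}\log^{-}\abs{L(\sigma+it)}\,dt$. One may assume $\norm{L}_2<\infty$, since otherwise the claimed right-hand side is $0$ and there is nothing to prove. In the nontrivial case I would invoke the $L^2$-version of the short-interval estimate, i.e.\ the bound behind Theorems 16 and 18 with the constant $K$ replaced by $\lambda_1$ as permitted there, which controls $\int_T^{T+\delta}\log^-\abs{L(\sigma+it)}\,dt$ by $\pi\p{(\log\norm{L}_2+\log 2)^2/\lambda_1+\delta^2\lambda_1}$; dividing by $\delta$ and substituting into the display above reproduces the asserted inequality simultaneously for every $p$. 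The main obstacle is not the Jensen reduction, which is routine once the $p$-independence is noticed, but pinning down the precise exponent: one must run the $\log^{-}$ estimate in the $\norm{\cdot}_2$-norm while retaining the additive constant $\log 2$ and the denominator $\lambda_1$ of the absolutely convergent Theorems 15 and 17. This amounts to (a) making the $L^2$-type short-interval bound available for the series at hand, which is immediate once $\norm{L}_2<\infty$ and the relevant structure is in force, and (b) sharpening the admittedly arbitrary choice of $\xi$ used in Theorem 14 (and hence Theorem 16) to the value producing the additive $\log 2$ in place of the cruder additive $1$, precisely the optimisation of $\xi$ already anticipated in the remarks accompanying those theorems.
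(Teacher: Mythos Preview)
Your Jensen reduction is exactly the paper's argument: the section introducing this theorem says only that the $L^p$ results follow from Jensen's inequality \eqref{jensineq2} applied to $f(t)=\abs{L(\sigma+it)}^p$ together with the short-interval $\log^-$ estimates of Theorems~15 and~16, and your steps (normalise the measure, apply concavity of $\log$, pull out $p$, drop $\log^+$, insert the $\log^-$ bound) are precisely that.

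Where you diverge is in the last paragraph, and there you are working harder than the paper intends. You correctly notice that the displayed exponent mixes the norm $\norm{L}_2$ (which belongs to Theorem~16) with the additive constant $\log 2$ and the denominator $\lambda_1$ (which belong to Theorem~15), and you propose to reconcile this by re-optimising the parameter $\xi$ in Theorem~14/16. But the paper does not carry out any such optimisation: Theorem~19 is presented as the $L^p$ companion to Theorem~17 in the absolutely convergent case, and Theorem~17 has $\norm{L}_1$ in the identical position. The $\norm{L}_2$ in the statement is almost certainly a typographical slip for $\norm{L}_1$; with that correction the result is an immediate consequence of Theorem~15~$i)$ plus your Jensen step, with no further adjustment of constants needed. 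Your proposed optimisation of $\xi$ would not in fact recover the constant $\log 2$ in the $H^2$ setting anyway, since the additive $1$ in Theorem~16 also absorbs the $\kappa$ and $\log(1+3\pi C/D)$ terms from Lemma~12, not merely the $-\log\xi$ term.
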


\begin{thm} Let $L \in H^2(C,\sigma)$ and $a_0=1$. Then
 \begin{gather*}
     \inf_T \p{\frac 1 \delta \int_T^{T+\delta} \abs{L(\sigma+it)}^p dt}^{1/p}  \geq \exp \p{-\pi \p{\frac{(\log \norm{L}_2+1)^2} {K \delta}+  K \delta}}, 
\end{gather*}
where $K$ is defined as in Theorem 7. Furthermore $K$ may be replaced by $\lambda_1$.
\end{thm}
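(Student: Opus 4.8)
The plan is to deduce this from Theorem 16 (the short-interval bound on $\int_T^{T+\delta} \log^- |L(\sigma+it)| dt$) exactly as Theorem 18 is deduced from it, but now interpolated through an $L^p$ version of the concave Jensen inequality \eqref{jensineq2}. First I would record that we may assume $a_0 = 1$ as in the statement, and—by the reduction in Subsection 6.1 and linearity—reduce to the case relevant for applying Theorem 16. The key point is that for a positive measurable function $g(t) = |L(\sigma+it)|$ on $[T, T+\delta]$ and the normalized measure $d\mu = dt/\delta$, the function $u \mapsto \log u$ is concave, so Jensen gives
\begin{gather*}
 \frac 1 \delta \int_T^{T+\delta} \log |L(\sigma+it)|^p \, dt \leq \log \p{\frac 1 \delta \int_T^{T+\delta} |L(\sigma+it)|^p \, dt},
\end{gather*}
equivalently
\begin{gather*}
 \p{\frac 1 \delta \int_T^{T+\delta} |L(\sigma+it)|^p \, dt}^{1/p} \geq \exp\p{\frac 1 {p\delta} \int_T^{T+\delta} \log |L(\sigma+it)|^p \, dt} = \exp\p{\frac 1 \delta \int_T^{T+\delta} \log |L(\sigma+it)| \, dt}.
\end{gather*}
The right-hand side is independent of $p$, which is exactly why the stated bound has the same shape as Theorem 18.

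Next I would bound the exponent from below. Writing $\log|L| = \log^+|L| - \log^-|L|$ we have $\int_T^{T+\delta}\log|L(\sigma+it)|\,dt \geq -\int_T^{T+\delta}\log^-|L(\sigma+it)|\,dt$, and Theorem 16(i) gives
\begin{gather*}
 \int_T^{T+\delta} \log^- |L(\sigma+it)| \, dt \leq \pi\p{\frac{(\log \norm{L}_2+1)^2}{K} + \delta^2 K}.
\end{gather*}
Dividing by $\delta$ and exponentiating produces precisely
\begin{gather*}
 \inf_T \p{\frac 1 \delta \int_T^{T+\delta} |L(\sigma+it)|^p \, dt}^{1/p} \geq \exp\p{-\pi\p{\frac{(\log \norm{L}_2+1)^2}{K\delta} + K\delta}},
\end{gather*}
and the replacement of $K$ by $\lambda_1$ is inherited from the corresponding clause of Theorem 16.

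I do not expect a genuine obstacle here; the only point requiring a little care is that $|L(\sigma+it)|$ be finite and the integrals meaningful, which follows from $L \in H^2(C,\sigma)$ being absolutely convergent for $\Re(s) = \sigma_1 > \sigma$ together with Theorem 4 guaranteeing local $L^2$ control down to $\Re(s)=\sigma$; strictly one works at $\sigma_1 > \sigma$ and lets $\sigma_1 \to \sigma$, as in the proof of Theorem 4. One should also note that Jensen's inequality as quoted in \eqref{jensineq2} is stated for $\log^+$ and remarked to hold for any concave function; applying it with the concave function $\log$ (after truncating from below, or on the region where $L$ is bounded below by the positive constant $\xi$ coming from Theorem 14, then controlling the remaining part by Theorem 16(i)) is the mild technical step, but it is entirely parallel to the proof of Theorem 18 and introduces nothing new.
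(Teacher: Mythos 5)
Your proposal is correct and matches the paper's own (very terse) argument: the paper derives Theorem 20 exactly by applying the concave form of Jensen's inequality \eqref{jensineq2} to $f(t)=\abs{L(\sigma+it)}^p$ with the normalized measure $dt/\delta$, dropping the nonnegative $\log^+$ contribution, and invoking Theorem 16(i) to bound $\int_T^{T+\delta}\log^-\abs{L(\sigma+it)}\,dt$, with the $K\mapsto\lambda_1$ clause inherited from Theorem 16. Your accounting of the constants (dividing by $\delta$ and exponentiating) reproduces the stated bound exactly.
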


\section{Effective non vanishing results for Dirichlet series with bounded coefficients}

We will give   sharper  estimates for Dirichlet series with bounded coefficients. Thus we will not only assume that $L \in H^2(C,\sigma)$ and $a_0=1$, but also assume that $|a_n| \leq 1$, i.e. the coefficients are bounded.
We first show a Lemma that corresponds to Theorem 14.
\begin{lem}
  Suppose  $L(s)$ is a Dirichlet series such that $(3)$, $(4)$ and $(5)$ are true. Suppose also that $a_0=1$ and $|a_n| \leq 1$. Then
\begin{gather*}
  \abs{L(s)} \geq \xi \qquad \p{\Re(s) \geq x_\xi}, \\ \intertext{where $x=x_\xi$ is the positive solution to}
 \p{1+\frac C x}e^{-\lambda_1 x}=1-\xi. \\ \intertext{In particular we have that}
x_\xi \geq \max \p{C,\lambda_1^{-1}\log^+\frac{2} {1-\xi}}. 
\end{gather*}
\end{lem}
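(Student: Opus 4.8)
I would run the argument behind Theorem 14, exploiting the hypothesis $\abs{a_n}\le 1$ to replace the $\ell^2$ quantity $\norm{L-1}_2$ occurring there by an explicit tail sum, and then carry out the Riemann-sum estimate of Lemma 4 with the first power of the exponential in place of the square.

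First I would reduce, exactly as in Lemma 9 and Theorem 14, to an estimate on $\sum_{n\ge1}e^{-\lambda_n(\sigma+x)}$. Using $a_0=1$, the triangle inequality, and the fact that each term $e^{-\lambda_n\Re(w)}$ decreases in $\Re(w)$ (since $\lambda_n\ge 0$), we get for $x>0$ and $\Re(w)\ge\sigma+x$
\begin{gather*}
 \abs{L(w)}\ge 1-\sum_{n=1}^\infty\abs{a_n}e^{-\lambda_n\Re(w)}\ge 1-\sum_{n=1}^\infty e^{-\lambda_n(\sigma+x)}.
\end{gather*}
Hence it suffices to prove the key estimate
\begin{gather*}
 \sum_{n=1}^\infty e^{-\lambda_n(\sigma+x)}\le\p{1+\frac Cx}e^{-\lambda_1 x},\qquad(x>0);
\end{gather*}
for then at $x=x_\xi$ the right-hand side equals $1-\xi$, giving $\abs{L(w)}\ge\xi$ for $\Re(w)\ge\sigma+x_\xi$, and the companion bound $\abs{L(w)}\le 2-\xi$ follows the same way. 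Note that $x_\xi$ is well defined because $x\mapsto(1+C/x)e^{-\lambda_1 x}$ is continuous and strictly decreasing on $(0,\infty)$, from $+\infty$ to $0$.

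The heart of the proof is the key estimate, and I would obtain it by imitating the proof of Lemma 4. Factoring out $e^{-\lambda_1 x}$ and isolating $n=1$,
\begin{gather*}
 \sum_{n=1}^\infty e^{-\lambda_n(\sigma+x)}=e^{-\lambda_1 x}\p{e^{-\sigma\lambda_1}+\sum_{n=2}^\infty e^{-\sigma\lambda_n}e^{-x(\lambda_n-\lambda_1)}},
\end{gather*}
with $e^{-\sigma\lambda_1}\le 1$. For $n\ge 2$ one would dominate $e^{-\sigma\lambda_n}$ by a constant multiple of the gap $\lambda_n-\lambda_{n-1}$ drawn from \eqref{Cdef}, after which $\sum_{n\ge 2}C(\lambda_n-\lambda_{n-1})e^{-x(\lambda_n-\lambda_1)}$ is a lower Riemann sum for $C\int_0^\infty e^{-xt}\,dt=C/x$ with partition points $C(\lambda_n-\lambda_1)$, just as in Lemma 4 (where the integral was $\int_0^\infty e^{-2xt/C}\,dt$). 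The step I expect to be the main obstacle is precisely this passage: \eqref{Cdef} naturally controls the \emph{second} power $e^{-\sigma(\lambda_n+\lambda_m)}$, whereas here a bound on the \emph{first} power $e^{-\sigma\lambda_n}$ is wanted. For $\sigma=0$ there is nothing to do, since then \eqref{Cdef} reads $\lambda_n-\lambda_{n-1}\ge 1/C$ and the Riemann-sum argument applies verbatim; for $\sigma>0$ one must instead feed in the $m=0$ instance $e^{-\sigma\lambda_n}\le C\lambda_n$ of \eqref{Cdef} (together with Lemma 3), and take care that no spurious factor $e^{\sigma\lambda_1}$ is left in the constant.

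Finally the ``in particular'' claim is elementary. Writing $x_0:=\max\p{C,\lambda_1^{-1}\log^+\frac{2}{1-\xi}}$, the bound $x_0\ge C$ gives $1+C/x_0\le 2$ while $x_0\ge\lambda_1^{-1}\log\frac{2}{1-\xi}$ gives $e^{-\lambda_1 x_0}\le\tfrac12(1-\xi)$, so $(1+C/x_0)e^{-\lambda_1 x_0}\le 1-\xi$; since $x\mapsto(1+C/x)e^{-\lambda_1 x}$ is decreasing this forces $x_\xi\le x_0$, which is the stated inequality (the ``$\geq$'' in the statement should read ``$\leq$'', as in Theorem 14).
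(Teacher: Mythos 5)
Your reduction is right in spirit, but the key estimate you reduce to, namely $\sum_{n\ge1}e^{-\lambda_n(\sigma+x)}\le\p{1+C/x}e^{-\lambda_1 x}$, is false in general, and the obstacle you yourself flag (that \eqref{Cdef} controls $e^{-\sigma(\lambda_n+\lambda_m)}$ rather than $e^{-\sigma\lambda_n}$) is not merely technical. Take the classical case $\lambda_n=\log(n+1)$, $\sigma=1/2$, $C=(\sqrt2\log2)^{-1}$ (Lemma 1): then $\sum_{n\ge1}e^{-\lambda_n(\sigma+x)}=\zeta(1/2+x)-1$ diverges for $0<x\le1/2$, while $(1+C/x)2^{-x}$ is finite. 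Your proposed patch via the $m=0$ instance $e^{-\sigma\lambda_n}\le C\lambda_n$ does not rescue this: $C\lambda_n$ is not the gap $\lambda_n-\lambda_{n-1}$, so $\sum_{n\ge2}C\lambda_n e^{-x(\lambda_n-\lambda_1)}$ is not a lower Riemann sum for $C\int_0^\infty e^{-xt}\,dt$, and in the classical example that sum behaves like $C\sum_n\log(n+1)(n+1)^{-x}$, again divergent for $x\le1$.

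The paper's one-line proof avoids the issue entirely: the first-power sum is the second-power sum of Lemma 4 evaluated at a shifted point. Indeed
$\sum_{n\ge1}e^{-\lambda_n(2\sigma+x)}=\sum_{n\ge1}e^{-2\lambda_n(\sigma+x/2)}\le\p{1+C/x}e^{-\lambda_1 x}$
by Lemma 4 applied with $x/2$ in place of $x$, and the triangle inequality with $a_0=1$, $\abs{a_n}\le1$ then gives $\abs{L(s)}\ge1-\p{1+C/x_\xi}e^{-\lambda_1 x_\xi}=\xi$. The price is that the nonvanishing half-plane obtained this way is $\Re(s)\ge2\sigma+x_\xi$ rather than $\sigma+x_\xi$; the lemma's bare ``$\Re(s)\ge x_\xi$'' is itself imprecise and must be read with this shift (for $\sigma=0$ there is no discrepancy and your direct Riemann-sum argument does work verbatim, as you note). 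Your remaining observations --- that $x\mapsto(1+C/x)e^{-\lambda_1 x}$ decreases from $+\infty$ to $0$ so $x_\xi$ exists, and that the displayed ``$x_\xi\ge\max(\dots)$'' should be ``$\le$'', proved exactly as you say via $1+C/x\le2$ for $x\ge C$ --- are correct and match the paper.
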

 \begin{proof} The first part follows from Lemma 4 and the triangle inequality. The second part follows from the fact that $1+C/x \leq 2$ for $x \geq C$.
 \end{proof}.

\subsection{Logarithm in short intervals}
Our results will follow from Lemma 13 and Lemma 11,12 in the same way as Theorem 15, and 16.
\begin{thm} Let $L(s)$ be a Dirichlet series in  $H^2(C,\sigma)$ and $a_0=1$, and $|a_n|\leq 1$. Then
 \begin{gather*}
    \int_T^{T+\delta} \log^- \abs{L(\sigma+it)} dt \leq K_0+ K_1\log \norm{L}_2, \\ \intertext{where}
   K_0= \pi \p{\max\p{C,\frac{\log 4}{K}} +\frac{\delta^2} {4C}}, \qquad K_1=C_0 K_0,\\ \intertext{and}
 C_0=\frac 1 2 \log\p{\tanh \pi+\frac 1 {\pi}}+\log(1+3 \pi)+\log 2=3.174092008,
\end{gather*}
where $K$ is defined as in Theorem 7. Furthermore $K$ may be replaced by $\lambda_1$.
\end{thm}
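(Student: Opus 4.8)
The plan is to run, almost verbatim, the derivation of Theorem~16 i) from Theorem~14 and Lemma~12, but with Lemma~13 playing the role of Theorem~14. The reason Lemma~13 is the right input is that, under the extra hypothesis $\abs{a_n}\le 1$, the zero-free half plane it produces has an abscissa that does \emph{not} grow with $\norm{L}_2$: for $a_0=1$ and $\abs{a_n}\le 1$ the triangle inequality together with a geometric-type tail estimate as in Lemma~4 gives, with $x=\Re(s)-\sigma$, the bound $\abs{L(s)}\ge 1-\p{1+C/x}e^{-\lambda_1 x}$, whose right-hand side depends only on $C$ and $\lambda_1$. This is exactly what replaces the $(\log\norm{L}_2)^2$ of Theorem~16 by a bound linear in $\log\norm{L}_2$.

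Step 1. I would apply Lemma~13 with the fixed choice $\xi=\halv$. Then $\abs{L(s)}\ge\halv$ for $\Re(s)\ge\sigma+x_{1/2}$, and the ``in particular'' clause of Lemma~13, together with $\lambda_1\ge K$ from Lemma~3, gives $x_{1/2}\le\max\p{C,\lambda_1^{-1}\log 4}\le\max\p{C,K^{-1}\log 4}$. Set $D:=\max\p{C,K^{-1}\log 4}$; then $D\ge C$ and $\abs{L(\sigma+D+i\tau)}\ge\halv$ for every real $\tau$. (For the final ``furthermore'' clause one takes instead $D:=\max\p{C,\lambda_1^{-1}\log 4}$, which is legitimate since $K\le\lambda_1$.)

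Step 2. I would feed $\xi=\halv$ and this $D$ into Lemma~12, applied at the point $\sigma+iT$, so that its left-hand side becomes $\int_T^{T+\delta}\log^-\abs{L(\sigma+it)}\,dt$; Lemma~12 then gives
\[
 \int_T^{T+\delta}\log^-\abs{L(\sigma+it)}\,dt\le\pi\p{D+\frac{\delta^2}{4D}}\p{\kappa+\log\p{1+\frac{3\pi C}{D}}+\log\norm{L}_2-\log\abs{L(\sigma+D+i(T+\delta/2))}}.
\]
Next I would render the $D$- and $T$-dependent terms harmless by monotonicity: $D\ge C$ gives $\log\p{1+3\pi C/D}\le\log(1+3\pi)$ and $\delta^2/(4D)\le\delta^2/(4C)$, while $\abs{L(\sigma+D+i(T+\delta/2))}\ge\halv$ gives $-\log\abs{L(\sigma+D+i(T+\delta/2))}\le\log 2$. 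Hence the last factor is at most $\kappa+\log(1+3\pi)+\log 2+\log\norm{L}_2=C_0+\log\norm{L}_2$ and the prefactor is at most $\pi\p{\max(C,K^{-1}\log 4)+\delta^2/(4C)}=K_0$, so that
\[
 \int_T^{T+\delta}\log^-\abs{L(\sigma+it)}\,dt\le K_0\p{C_0+\log\norm{L}_2}=C_0K_0+K_0\log\norm{L}_2
\]
uniformly in $T$, which is the asserted bound with constants $K_0$ and $K_1=C_0K_0$; the numerical value $C_0=3.174092008$ is the direct evaluation of $\halv\log(\tanh\pi+\pi^{-1})+\log(1+3\pi)+\log 2$, and the ``furthermore'' clause is the identical computation with the sharper $D=\max(C,\lambda_1^{-1}\log 4)$.

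There is no genuine analytic difficulty here: Lemmas~10--13 and Theorems~4, 7 and 10 already do all the real work, and the argument is purely a matter of plugging one result into another. The only point demanding care is the constant bookkeeping of Step~2 — in particular verifying that $\xi=\halv$ is the choice that simultaneously produces the ``$\log 4$'' appearing inside $K_0$ and the ``$\log 2$'' appearing inside $C_0$, and reconciling the collected bound $K_0\p{C_0+\log\norm{L}_2}$ with the exact labelling $K_0+K_1\log\norm{L}_2$ in the statement (here one uses $\norm{L}_2\ge 1$, valid since $a_0=1$). If a sharper constant were desired one would keep $\xi$ free throughout Steps~1 and~2 and optimise at the end, exactly as the footnotes to Theorems~15 and~16 propose.
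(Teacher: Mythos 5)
Your argument is exactly the paper's: the printed proof of this theorem is literally ``use $\xi=1/2$ in Lemma 13 and Lemma 12,'' and your two steps fill in those details correctly, including the bounds $D=\max(C,K^{-1}\log 4)\ge C$, $\log(1+3\pi C/D)\le\log(1+3\pi)$ and $-\log\abs{L(\sigma+D+i(T+\delta/2))}\le\log 2$. The one point to flag is your final reconciliation: what the computation actually yields is $C_0K_0+K_0\log\norm{L}_2$, i.e.\ $K_1+K_0\log\norm{L}_2$, and the inequality $\norm{L}_2\ge 1$ does \emph{not} convert this into the stated $K_0+K_1\log\norm{L}_2$ (that would require $\log\norm{L}_2\ge 1$, since $C_0>1$); the discrepancy is evidently a transposition of the two constants in the theorem's statement rather than a defect in your derivation, so you should simply record the bound in the form $K_1+K_0\log\norm{L}_2$.
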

\begin{proof}
This follows from using $\xi=1/2$ in Lemma 13 and Lemma 12.\footnote{This gives {\em some} but not give the best choice of constants $(K_0,K_1)$. To get a better choice of $K_0$ at the expense of $K_1$ we need to choose $\xi$ close to $0$. For any particular Dirichlet series even better choice will be obtained by Lemma 9.}
\end{proof}

A similar result is true for absolutely convergent Dirichlet series. However to use this method of proof we also need to assume that $\lambda_n$ fulfill 
\eqref{Cdef}  for some $C>0$

\begin{thm} Let $L(s)$ be a Dirichlet series that is absolutely convergent on $\Re(s)=\sigma$  such that $(3)$, $(4)$ and $(5)$ are true,   $a_0=1$, and $|a_n|\leq 1$. Then
 \begin{gather*}
    \int_T^{T+\delta} \log^- \abs{L(\sigma+it)} dt \leq K_0 \p{\log 2+\log \norm{L}_1}.
\end{gather*}
where $K_0$ is defined by Theorem 21.
\end{thm}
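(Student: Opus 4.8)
The plan is to mirror the proof of Theorem 21 exactly, replacing the $H^2(C,\sigma)$ ingredients with their absolutely convergent counterparts. The key observation is that although $L(s)$ is only assumed absolutely convergent on $\Re(s)=\sigma$, we also assume $\lambda_n$ satisfies \eqref{Cdef} for some $C>0$; this is precisely what is needed to invoke Lemma 13 (whose hypotheses are just \eqref{Ldef}, \eqref{dircon}, \eqref{Cdef}, together with $a_0=1$ and $|a_n|\le 1$) and Lemma 11 (which needs absolute convergence together with a lower bound $\xi\le|L(s)|$ on a half-plane).

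First I would apply Lemma 13 with $\xi=1/2$. This yields $|L(\sigma+s)|\ge 1/2$, hence $-\log|L(\sigma+s)|\le\log 2$, for $\Re(s)\ge D+\sigma$ where, using the ``in particular'' bound of Lemma 13 together with $1/K\ge 1/C$ (Lemma 3, as in the proof of Theorem 16), one may take $D=\max(C,\lambda_1^{-1}\log 4)$ and note $D\ge C$. Next I would feed this into Lemma 11, whose conclusion reads
\begin{gather*}
  \int_0^\delta \log^{-}\abs{L(s+it)}\,dt \le \pi\p{D+\frac{\delta^2}{4D}}\p{\log\norm{L}_1 - \log\abs{L(\sigma+D+i\delta/2)}},
\end{gather*}
valid for $\Re(s)\ge\sigma$. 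Since $\Re(D+\sigma)\ge D+\sigma$ we have $-\log|L(\sigma+D+i\delta/2)|\le\log 2$, so the last factor is at most $\log 2+\log\norm{L}_1$. Finally, bounding $D+\delta^2/(4D)\le\max(C,\log 4/K)+\delta^2/(4C)$ (using $D\ge C$ in the denominator of the second term and $D=\max(C,\log 4/K)$ in the first) identifies $\pi(D+\delta^2/(4D))$ with $K_0$ as defined in Theorem 21, and the stated inequality follows with $T=0$; shift-invariance in $T$ is automatic since all estimates hold for every $\Re(s)\ge\sigma$ and the integrand only involves $t$.

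I expect no serious obstacle here: the entire argument is a routine transcription of the proof of Theorem 21, with Theorem 11 and Lemma 12 replaced by Theorem 12 and Lemma 11 respectively, exactly as the paper's own remark before Theorem 22 signals (``in the same way''). The only point requiring a word of care is justifying that the constant $K_0$ from Theorem 21 is still the right prefactor — i.e. that $D=\max(C,\log 4/K)$ here matches the $\max(C,\log 4/K)$ appearing in $K_0$, and that replacing $D$ by $C$ in the $\delta^2/(4D)$ term is legitimate because $D\ge C$. One should also record that, just as in Theorems 14, 16 and 21, the constant $K$ may be replaced by $\lambda_1$ for a particular Dirichlet series, though the statement of Theorem 22 as written only uses $K_0$, which already folds in the $\max(C,\log4/K)$ bound.

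\begin{proof}
This follows from using $\xi=1/2$ in Lemma 13 and Lemma 11, in the same way as Theorem 21. Indeed, by Lemma 13 with $\xi=1/2$ we obtain $\abs{L(\sigma+s)}\geq 1/2$, hence $-\log\abs{L(\sigma+s)}\leq\log 2$, for $\Re(s)\geq D+\sigma$, where $D=\max\p{C,\lambda_1^{-1}\log 4}$; note $D\geq C$. Applying Lemma 11 (valid since $L$ is absolutely convergent on $\Re(s)=\sigma$, $a_0=1$ so $\norm{L}_1\geq 1$, and $\abs{L(s)}\geq 1/2$ for $\Re(s)\geq D+\sigma$) we get
\begin{gather*}
  \int_T^{T+\delta}\log^{-}\abs{L(\sigma+it)}\,dt \leq \pi\p{D+\frac{\delta^2}{4D}}\p{\log\norm{L}_1-\log\abs{L(\sigma+D+i\delta/2)}}.
\end{gather*}
Since $-\log\abs{L(\sigma+D+i\delta/2)}\leq\log 2$ and $D\geq C$, so that $D+\delta^2/(4D)\leq\max\p{C,\lambda_1^{-1}\log 4}+\delta^2/(4C)$, the right side is at most $K_0\p{\log 2+\log\norm{L}_1}$ with $K_0$ as in Theorem 21.
\end{proof}
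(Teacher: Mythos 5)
Your proof is correct and follows the same route the paper intends: Lemma 13 with $\xi=1/2$ gives the lower bound $\abs{L}\geq 1/2$ on the half-plane $\Re(s)\geq\sigma+D$ with $D=\max\p{C,\lambda_1^{-1}\log 4}$, the $L^1$ logarithmic-integral bound of Lemma 11 then applies, and the comparisons $D\geq C$ and $\lambda_1\geq K$ (Lemma 3) recover the constant $K_0$ of Theorem 21. Note that the paper's own one-line proof cites Lemma 12, which is evidently a typo for Lemma 11 (the conclusion involves $\norm{L}_1$, not $\norm{L}_2$); your substitution of Lemma 11 is the correct reading.
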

\begin{proof}
This follows from using $\xi=1/2$ in Lemma 13 and Lemma 12.
\end{proof}

\subsection{$L^p$-norm}
Similarly to we have the following results in $L^p$-norm,
\begin{thm} Let $L \in H^2(C,\sigma)$,  $a_0=1$ and $|a_n|\leq 1$. Then
 \begin{gather*}
     \inf_T   \p{\frac 1 \delta \int_T^{T+\delta} \abs{L(\sigma+it)}^p dt}^{1/p}  \geq  (24 \norm{L}_2)^{-K_0/\delta}, 
\end{gather*}
where $K_0$ is defined as in Theorem 21.
\end{thm}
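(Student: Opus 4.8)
The plan is to reduce the $L^p$-statement to the $\log^-$ estimate of Theorem 21 by exactly the same device used to pass from Theorems 15 and 16 to Theorems 19 and 20, namely applying the concave-function version of Jensen's inequality \eqref{jensineq2} to $f(t)=\abs{L(\sigma+it)}^p$. First I would fix $T$ and set $\mu(t)=1/\delta$ on $[T,T+\delta]$, so that $\int \mu = 1$ and $\mu \geq 0$. Applying \eqref{jensineq2} with $\log^+$ replaced by $-\log^-$ (which is concave, as noted after \eqref{jensineq2}) to the function $\abs{L(\sigma+it)}^p$ gives
\begin{gather*}
 -\frac 1 \delta \int_T^{T+\delta} \log^- \abs{L(\sigma+it)}^p \, dt \leq -\log^- \p{\frac 1 \delta \int_T^{T+\delta} \abs{L(\sigma+it)}^p dt},
\end{gather*}
and since $\log^- \abs{L(\sigma+it)}^p = p \log^- \abs{L(\sigma+it)}$, the left side is $-\frac p \delta \int_T^{T+\delta} \log^-\abs{L(\sigma+it)}\,dt$. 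Exponentiating, this yields the lower bound
\begin{gather*}
 \p{\frac 1 \delta \int_T^{T+\delta} \abs{L(\sigma+it)}^p dt}^{1/p} \geq \exp\p{\frac 1 \delta \int_T^{T+\delta} \log^- \abs{L(\sigma+it)}\, dt},
\end{gather*}
where I have used that $\log^- \leq 0$ so the exponent is genuinely the negative of the quantity controlled by Theorem 21.

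Next I would insert the bound of Theorem 21: under the hypotheses $L \in H^2(C,\sigma)$, $a_0=1$, $\abs{a_n}\leq 1$, we have $\int_T^{T+\delta}\log^-\abs{L(\sigma+it)}\,dt \geq -(K_0 + K_1 \log\norm{L}_2)$ with $K_1 = C_0 K_0$ and $C_0 = 3.174092008 < \log 24 = 3.178\ldots$. Substituting gives the lower bound $\exp\p{-(K_0 + C_0 K_0 \log\norm{L}_2)/\delta}$, which I would rewrite as $\p{e \cdot \norm{L}_2^{C_0}}^{-K_0/\delta}$. Since $e^{C_0} < 24$ (indeed $e^{1} e^{C_0} = e^{1+3.1740\ldots} = e^{4.1740\ldots}$, and one should instead bound $e \cdot \norm{L}_2^{C_0} \leq 24 \norm{L}_2$ when $\norm{L}_2 \geq 1$, which holds because $a_0 = 1$ forces $\norm{L}_2 \geq 1$, together with $e < 24$ and $C_0 < 1 + \log 24$ — more carefully $e\norm{L}_2^{C_0} \le e \norm{L}_2 \cdot \norm{L}_2^{C_0-1}$, and since $C_0 > 1$ this needs $\norm{L}_2^{C_0 - 1}$ absorbed, so the clean route is: $e \cdot \norm{L}_2^{C_0} \le (24\norm{L}_2)$ iff $C_0 \log\norm{L}_2 \le \log 24 + \log\norm{L}_2 - 1$, i.e. $(C_0-1)\log\norm{L}_2 \le \log 24 - 1 = 2.178\ldots$, which fails for large $\norm{L}_2$).

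The main obstacle, then, is precisely this last absorption step: the exponent $C_0 \approx 3.174$ on $\norm{L}_2$ is larger than $1$, so one cannot simply fold $\norm{L}_2^{C_0}$ into $24\norm{L}_2$. The resolution I would adopt is to take the exponent $K_0/\delta$ outside and observe that what is actually claimed is $(24\norm{L}_2)^{-K_0/\delta}$, so it suffices to show $e^{1/\delta \cdot K_0}\norm{L}_2^{C_0 K_0/\delta} \leq (24\norm{L}_2)^{K_0/\delta}$, i.e. $e \cdot \norm{L}_2^{C_0} \leq (24\norm{L}_2)$ — which, as noted, is false in general. Hence the honest fix is to track constants more loosely and replace $C_0$ by a bound exploiting that Theorem 21's $K_1$ already carries a factor $K_0$; but inspecting the statement, the intended reading is surely that $K_1 \log\norm{L}_2 + K_0 = K_0(C_0\log\norm{L}_2 + 1) \le K_0 \log(24\norm{L}_2)$ requires $C_0 \log\norm{L}_2 + 1 \le \log 24 + \log\norm{L}_2$; since $\norm{L}_2 \geq 1$ this is worst as $\norm{L}_2\to\infty$ only if $C_0 > 1$. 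I would therefore reformulate the final step by using instead the sharper per-function bound from Lemma 9 when $\norm{L}_2$ is large, and Theorem 21 when $\norm{L}_2$ is bounded, patching the two regimes; alternatively, and more simply, I would replace the constant $24$ in the statement by $24^{C_0}$ or note that the inequality as printed should read $(24\,\norm{L}_2^{C_0})^{-K_0/\delta}$. Modulo this bookkeeping about the numerical constant, the proof is the three-line Jensen argument above combined verbatim with Theorem 21, exactly parallel to the derivation of Theorems 19 and 20 from Theorems 15 and 16.
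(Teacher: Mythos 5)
Your route is exactly the paper's: the printed proof of this theorem is the one--liner ``Theorem 21, Jensen's inequality \eqref{jensineq2}, and the fact that $\exp(C_0)=23.9\leq 24$,'' which is precisely your reduction with $\mu(t)=1/\delta$ applied to $\abs{L(\sigma+it)}^p$. The bookkeeping obstruction you isolate at the end is also genuine, and you have diagnosed it correctly: if Theorem 21 is read literally as $\int\log^-\leq K_0+K_1\log\norm{L}_2$ with $K_1=C_0K_0$, then Jensen yields the lower bound $\p{e\,\norm{L}_2^{C_0}}^{-K_0/\delta}$, and since $C_0\approx 3.174>1$ this cannot be folded into $(24\norm{L}_2)^{-K_0/\delta}$ once $\norm{L}_2$ exceeds roughly $e^{(\log 24-1)/(C_0-1)}\approx 2.7$. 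What you are missing is where the discrepancy actually lives. The proof of Theorem 21 (Lemma 12 with $D=\max(C,\log 4/K)$ and $\xi=1/2$) produces the factor $\pi\p{D+\delta^2/(4D)}\leq K_0$ multiplying the \emph{entire} bracket $\kappa+\log(1+3\pi C/D)+\log 2+\log\norm{L}_2$, i.e.\ it delivers $K_0\p{C_0+\log\norm{L}_2}=K_1+K_0\log\norm{L}_2$; in the printed statement of Theorem 21 the constants $K_0$ and $K_1$ have been transposed, so that $C_0$ sits on the wrong term. With the bound in the form its own proof gives, the exponent after Jensen is $-\frac{K_0}{\delta}\p{C_0+\log\norm{L}_2}$, and the cited inequality $e^{C_0}=23.9\leq 24$ performs exactly the absorption needed to produce $(24\norm{L}_2)^{-K_0/\delta}$ --- no case split on the size of $\norm{L}_2$, and no weakening of the statement to $(24\norm{L}_2^{C_0})^{-K_0/\delta}$, is required. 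In short: same argument as the paper, your flagged gap is real, but the clean repair is to correct the statement of Theorem 21 rather than to alter the present theorem.
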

\begin{proof}
  This follows from Theorem 21, Jensen's inequality \eqref{jensineq2} and the fact that $\exp(C_0)=23.9 \leq 24$.
\end{proof}

\begin{thm} Let $L(s)$ be a Dirichlet series that is absolutely convergent on $\Re(s)=\sigma$  such that $(3)$, $(4)$ and $(5)$ are true,   $a_0=1$, and $|a_n|\leq 1$. Then
 \begin{gather*}
     \inf_T \p{\frac 1 \delta \int_T^{T+\delta} \abs{L(\sigma+it)}^p dt}^{1/p}  \geq  (2 \norm{L}_1)^{-K_0/\delta},
\end{gather*}
where $K_0$ is defined as in Theorem 21.\end{thm}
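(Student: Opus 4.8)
The plan is to obtain Theorem 24 from Theorem 22 exactly as Theorem 23 was obtained from Theorem 21, i.e.\ by one application of the concave form of Jensen's inequality \eqref{jensineq2}. Fix $T\in\R$ and let $\mu$ be the probability density $\mu(t)=1/\delta$ supported on $[T,T+\delta]$, so that $\int\mu(t)\,dt=1$ and $\mu\geq 0$. Since $L$ is absolutely convergent on $\Re(s)=\sigma$ we have the crude bound $\abs{L(\sigma+it)}\leq\norm{L}_1<\infty$, so $f(t)=\abs{L(\sigma+it)}^p$ is a bounded measurable function and \eqref{jensineq2} may be applied to it with the concave function $-\log^-$ in place of $\log^+$. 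This yields
\begin{gather*}
  -\frac1\delta\int_T^{T+\delta}\log^-\p{\abs{L(\sigma+it)}^p}\,dt \;\leq\; \log\p{\frac1\delta\int_T^{T+\delta}\abs{L(\sigma+it)}^p\,dt}.
\end{gather*}

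Next I would simplify the left-hand side using the elementary scaling identity $\log^-(x^p)=p\log^-(x)$, valid for $x>0$ and $p>0$, which turns it into $-\tfrac p\delta\int_T^{T+\delta}\log^-\abs{L(\sigma+it)}\,dt$. Then I would invoke Theorem 22: under precisely the standing hypotheses of Theorem 24, namely \eqref{Ldef}, \eqref{dircon}, \eqref{Cdef} together with $a_0=1$ and $\abs{a_n}\leq 1$, it gives $\int_T^{T+\delta}\log^-\abs{L(\sigma+it)}\,dt\leq K_0\p{\log 2+\log\norm{L}_1}$, a bound independent of $T$. Substituting,
\begin{gather*}
  \log\p{\frac1\delta\int_T^{T+\delta}\abs{L(\sigma+it)}^p\,dt}\;\geq\;-\frac{p\,K_0}{\delta}\p{\log 2+\log\norm{L}_1}.
\end{gather*}
Exponentiating and taking $p$-th roots gives $\p{\tfrac1\delta\int_T^{T+\delta}\abs{L(\sigma+it)}^p\,dt}^{1/p}\geq\exp\p{-\tfrac{K_0}{\delta}\log(2\norm{L}_1)}=(2\norm{L}_1)^{-K_0/\delta}$; since the right-hand side does not depend on $T$, taking the infimum over $T$ completes the proof.

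I do not anticipate any genuine obstacle: the argument is a routine combination and is the $L^1$-side mirror of the proof of Theorem 23, with the trivial identity $\exp(\log 2)=2$ replacing the numerical check $\exp(C_0)\leq 24$ there. The only points deserving a word of care are that $-\log^-$ is legitimate in \eqref{jensineq2} (it is concave on $(0,\infty)$, being the pointwise minimum of the concave functions $x\mapsto 0$ and $x\mapsto\log x$, and it satisfies $-\log^-(y)\leq\log y$ so the term on the right may be dropped down to $\log\int\mu\abs{f}$), the scaling identity $\log^-(x^p)=p\log^-(x)$, and the uniformity in $T$ of the estimate of Theorem 22, which is immediate from translation invariance of $\delta$, $\norm{L}_1$ and of the defining conditions for absolute convergence on $\Re(s)=\sigma$.
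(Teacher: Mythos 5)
Your argument is correct and is exactly the paper's route: the paper's proof of this theorem is the one-line "This follows from Theorem 22 and Jensen's inequality \eqref{jensineq2}," and you have simply written out that combination in detail (with the paper's convention that $\log^-$ denotes the nonnegative negative part, so that $-\log^-=\min(0,\log)$ is concave). The computation $\exp\p{-\tfrac{K_0}{\delta}(\log 2+\log\norm{L}_1)}=(2\norm{L}_1)^{-K_0/\delta}$ matches the stated bound, so nothing further is needed.
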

\begin{proof} This follows from Theorem 22 and Jensen's inequality \eqref{jensineq2}. \end{proof}

\subsection{sup-norm}
Similarly to we have the following results in sup-norm,

\begin{thm} Let $L \in H^2(C,\sigma)$,  $a_0=1$ and $|a_n|\leq 1$. Then
 \begin{gather*}
      \inf_T   \max_{t \in[T,T+\delta]} \abs{L(\sigma+it)}  \geq  (24 \norm{L}_2)^{-K_0/\delta}, 
\end{gather*}
where $K_0$ is defined as in Theorem 21.
\end{thm}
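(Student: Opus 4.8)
The plan is to deduce this from the short-interval logarithmic estimate of Theorem 21 in exactly the way the $L^p$-statement of Theorem 23 was deduced from it, the only extra ingredient being the elementary observation that the maximum modulus of $L$ on an interval dominates every mean of $\abs{L}$ over that interval.

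First I would start from the trivial inequality
\begin{gather*}
 \max_{t\in[T,T+\delta]}\abs{L(\sigma+it)} \;\geq\; \frac 1\delta\int_T^{T+\delta}\abs{L(\sigma+it)}\,dt,
\end{gather*}
so that it suffices to bound the mean on the right from below. Applying Jensen's inequality \eqref{jensineq2} to the concave function $\log$ with the probability density $\mu(t)=1/\delta$ on $[T,T+\delta]$ — i.e. the arithmetic--geometric mean inequality in integral form — gives
\begin{gather*}
 \frac 1\delta\int_T^{T+\delta}\abs{L(\sigma+it)}\,dt \;\geq\; \exp\p{\frac 1\delta\int_T^{T+\delta}\log\abs{L(\sigma+it)}\,dt},
\end{gather*}
and, since $\log^{+}\geq 0$, the exponent on the right is at least $-\frac 1\delta\int_T^{T+\delta}\log^{-}\abs{L(\sigma+it)}\,dt$.

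Next I would invoke Theorem 21: because $L\in H^2(C,\sigma)$ with $a_0=1$ and $\abs{a_n}\le 1$, that theorem provides an upper bound for $\int_T^{T+\delta}\log^{-}\abs{L(\sigma+it)}\,dt$ in terms of $K_0$, $C_0$ and $\log\norm{L}_2$. Feeding this into the previous display and simplifying exactly as in the proof of Theorem 23 — using that $a_0=1$ forces $\norm{L}_2\ge 1$, together with the numerical fact $\exp(C_0)=23.9\le 24$ — one obtains
\begin{gather*}
 \max_{t\in[T,T+\delta]}\abs{L(\sigma+it)} \;\geq\; (24\,\norm{L}_2)^{-K_0/\delta}
\end{gather*}
with a right-hand side independent of $T$, so taking the infimum over $T$ yields the theorem. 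Since $K$ may be replaced by $\lambda_1$ throughout Theorem 21, and hence in $K_0$, the same substitution is legitimate here.

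Equivalently and even more quickly, one may note that $\p{\tfrac 1\delta\int_T^{T+\delta}\abs{L(\sigma+it)}^p\,dt}^{1/p}$ is nondecreasing in $p$ and converges to $\max_{t\in[T,T+\delta]}\abs{L(\sigma+it)}$ as $p\to\infty$, so the result is immediate from Theorem 23. Either way the argument is routine; there is no genuine obstacle once Theorems 21 and 23 are available, the only point requiring any care being the bookkeeping of the constants $K_0$ and $C_0$ and the verification of the numerical inequality $\exp(C_0)\le 24$ needed to reach the clean form $(24\norm{L}_2)^{-K_0/\delta}$.
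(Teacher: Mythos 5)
Your proposal is correct and is essentially the paper's own (one-line) argument: bound $\int_T^{T+\delta}\log^{-}\abs{L(\sigma+it)}\,dt$ by Theorem 21, use that the maximum dominates the mean, and absorb the constant via $\exp(C_0)=23.9\le 24$, exactly as in the deduction of Theorem 23. The only cosmetic difference is that you pass through the arithmetic mean of $\abs{L}$ and then Jensen's inequality \eqref{jensineq2}, whereas for the sup-norm one can skip Jensen entirely by applying $\max\ge\text{mean}$ directly to $\log\abs{L(\sigma+it)}$ and discarding the nonnegative $\log^{+}$ part; both routes yield the same bound.
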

\begin{proof}
  This follows from Theorem 21, Eq. \eqref{xxx} and the fact that $\exp(C_0)=23.9 \leq 24$.
\end{proof}

\begin{thm} Let $L(s)$ be a Dirichlet series that is absolutely convergent on $\Re(s)=\sigma$  such that $(3)$, $(4)$ and $(5)$ are true,   $a_0=1$, and $|a_n|\leq 1$. Then
 \begin{gather*}
      \inf_T   \max_{t \in[T,T+\delta]} \abs{L(\sigma+it)}   \geq  (2 \norm{L})^{-K_0/\delta},
\end{gather*}
where $K_0$ is defined as in Theorem 21.\end{thm}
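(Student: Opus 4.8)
The plan is to follow the proof of Theorem 25, using Theorem 22 in place of Theorem 21; alternatively one may obtain Theorem 26 from Theorem 24 by letting $p\to\infty$. (The $\norm{L}$ in the displayed bound should read $\norm{L}_1$, and the constant is $2$ rather than $24$ simply because the bound of Theorem 22 is already of the form $K_0(\log 2+\log\norm{L}_1)$, so no auxiliary numerical estimate such as $e^{C_0}\le 24$ is needed.)

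First I would fix an interval $[T,T+\delta]$ and pass from the maximum to the mean. Since $L$ is continuous on $\Re(s)=\sigma$, the maximum $M_T=\max_{t\in[T,T+\delta]}\abs{L(\sigma+it)}$ is attained, and the maximum of a function on an interval is at least its average, so
\[
 \log M_T=\max_{t\in[T,T+\delta]}\log\abs{L(\sigma+it)}\ \ge\ \frac 1\delta\int_T^{T+\delta}\log\abs{L(\sigma+it)}\,dt.
\]
Writing $\log\abs{L}=\log^+\abs{L}-\log^-\abs{L}$ and dropping the nonnegative term $\log^+\abs{L}$ --- this is exactly the use made of Eq. \eqref{xxx} in the proof of Theorem 25 --- this gives
\[
 \log M_T\ \ge\ -\frac 1\delta\int_T^{T+\delta}\log^-\abs{L(\sigma+it)}\,dt .
\]

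Next I would invoke Theorem 22. Because $a_0=1$ and $\lambda_0=0$ force $\norm{L}_1\ge 1$, we have $\log^+\norm{L}_1=\log\norm{L}_1$, so Theorem 22 applies in the stated form and yields $\int_T^{T+\delta}\log^-\abs{L(\sigma+it)}\,dt\le K_0(\log 2+\log\norm{L}_1)$. Substituting this into the previous display, exponentiating, and finally taking the infimum over $T$, we obtain
\[
 \inf_T\ \max_{t\in[T,T+\delta]}\abs{L(\sigma+it)}\ \ge\ \exp\!\Bigl(-\tfrac{K_0}{\delta}\bigl(\log 2+\log\norm{L}_1\bigr)\Bigr)=(2\norm{L}_1)^{-K_0/\delta},
\]
which is the assertion.

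I do not expect a genuine obstacle here: all of the analytic weight has already been carried by Theorem 22 (and behind it Lemma 13, Lemma 12, Theorem 12 and the Montgomery--Vaughan inequality), and what remains is the elementary inequality that a maximum dominates a mean together with the trivial bound $\log^+\abs{L}\ge 0$. The only points requiring care are keeping the sign convention for $\log^\pm$ consistent with the rest of the paper, and noting that $\norm{L}_1\ge1$, which is what lets the bound be written with $\log\norm{L}_1$ rather than $\log^+\norm{L}_1$.
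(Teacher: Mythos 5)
Your argument is correct and is essentially the paper's own proof: the paper derives Theorem 26 directly from Theorem 22 (together with Eq.~\eqref{xxx}, which you effectively replace by the even simpler observation that $\log^+\abs{L}\geq 0$ can be dropped), and you correctly identify both the typo $\norm{L}=\norm{L}_1$ and the reason no constant like $24$ appears here. Nothing further is needed.
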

\begin{proof} This follows from Theorem 22 and Eq. \eqref{xxx}. \end{proof}

\subsection{Applications on Dirichlet-Hurwitz series}

Since we will be mainly interested in applications on the Hurwitz zeta-function and classical Dirichlet series we will state some results on Dirichlet-Hurwitz series. We will choose to state the result as a Lemma since it will have applications on the Hurwitz zeta-function on the line $\Re(s)=1$. The result for the $H^2(C,\sigma)$ will give stronger results than the absolutely convergent case. While we could have used Theorem 23, we will choose to use Lemma 9 more directly since it will give stronger results for particular Dirichlet series. The nice thing with e.g. Theorem 23 is that it gives a uniform bound for the whole class $H^2(\sigma,C)$.

\begin{lem}
  Assume that $0 <\alpha \leq 1$, and  that $|a_n| \leq 1$. Then we have for $0 < \delta \leq 0.05$ that
  \begin{gather*}
    \int_0^\delta \abs{\alpha^{-1-it}+\sum_{n=1}^\infty a_n (n+\alpha)^{-1-it}} dt \geq   \alpha^{-1} \p{1 + \alpha \sum_{n=1}^\infty \frac{\abs{a_n}^2} {n+\alpha}}^{-\frac{29} {25 \delta}}e^{-\frac{16} \delta} .
  \end{gather*}
 \end{lem}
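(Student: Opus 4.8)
The plan is to recognize the Dirichlet series inside the integral as (essentially) a member of $H^2(\log(n+\alpha)-\log\alpha, 1/2)$ after the substitution $s \mapsto s$ corresponding to the Hurwitz-type normalization, and then apply the short-interval logarithmic bound of Theorem 21 together with the sup/$L^1$-type passage used in the preceding subsections. First I would write $L(s) = \alpha^{s}\sum_{n=0}^\infty a_n (n+\alpha)^{-s}$ with $a_0 = 1$, so that $L(s) = \sum_{n=0}^\infty a_n e^{-\lambda_n s}$ with $\lambda_n = \log(n+\alpha) - \log\alpha$; by Lemma 2 this sequence satisfies \eqref{Cdef} with $\sigma = 1/2$ and $C = (\sqrt 2 \log 2)^{-1} < 1.03$, and the hypothesis $|a_n| \le 1$ is exactly the bounded-coefficient assumption of Theorem 21. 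The norm appearing on the right-hand side is $\norm{L}_2^2 = \sum |a_n|^2$, but note $\sum_n |a_n|^2$ need not match the stated $1 + \alpha\sum_{n\ge 1}|a_n|^2/(n+\alpha)$; the extra weight $\alpha/(n+\alpha) = e^{-\lambda_n}\cdot(\text{something}) \le e^{-\lambda_n}$ comes precisely from passing from $\Re(s) = 1/2$ (the natural abscissa for the class) to $\Re(s) = 1$ in the Hurwitz normalization — i.e. from a unit shift $x = 1/2$, which by Theorem 7 / Lemma 5 contracts the relevant norm. So the correct intermediate quantity to carry is $\norm{L_{1/2} - 1}_2^2 \le \sum_{n\ge 1}|a_n|^2 e^{-\lambda_n} = \alpha\sum_{n\ge 1}|a_n|^2/(n+\alpha)$, and $\norm{L_{1/2}}_2^2 \le 1 + \alpha\sum_{n\ge 1}|a_n|^2/(n+\alpha)$.

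Next I would apply Theorem 21 with $\sigma = 1/2$, but to the shifted series $L_{1/2}(s) = L(s + 1/2)$, which again lies in $H^2(C, 1/2)$ with the smaller norm just computed and still has leading coefficient $1$ and bounded coefficients. Theorem 21 then yields
\begin{gather*}
  \int_T^{T+\delta} \log^- \abs{L(1 + it)} dt \le K_0 + K_1 \log \norm{L_{1/2}}_2,
\end{gather*}
with $K_0 = \pi(\max(C, \tfrac{\log 4}{K}) + \tfrac{\delta^2}{4C})$ and $K_1 = C_0 K_0$, $C_0 = 3.174\ldots$. Here one is free to replace $K$ by $\lambda_1 = \log(1+\alpha) - \log\alpha = \log(1 + 1/\alpha) \ge \log 2$ (since $\alpha \le 1$), which bounds $\log 4 / \lambda_1 \le 2$, and one checks $\max(C, 2) = 2$ while $\pi(2 + \delta^2/(4C))$ for $\delta \le 0.05$ and $C > 1$ is comfortably below $16/\delta$ once $\delta$ is small — this is where the explicit constants $29/(25\delta)$ and $16/\delta$ in the statement will come from after a routine but slightly fiddly numerical reconciliation. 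The passage from $\log^-$ of the integral to a lower bound on $\int |L|$ is then the standard one: by Jensen (concavity of $\log$, the estimate used for Theorems 19–20 and 25–26),
\begin{gather*}
  \int_0^\delta \abs{L(1+it)} dt \ge \delta \exp\p{\frac 1 \delta \int_0^\delta \log\abs{L(1+it)} dt} \ge \delta \exp\p{-\frac 1\delta \int_0^\delta \log^-\abs{L(1+it)} dt},
\end{gather*}
and since $\abs{\alpha^{-1-it} + \sum a_n(n+\alpha)^{-1-it}} = \alpha^{-1-it}\cdot\alpha\cdot|L(1+it)|/\alpha = \alpha^{-1}|L(1+it)|$ — tracking the $\alpha^s$ factor — the prefactor $\alpha^{-1}$ and the exponent $\exp(C_0)$-type constants fall out, giving the claimed inequality.

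The main obstacle I anticipate is purely bookkeeping of constants: converting the clean but abstract bound $K_0 + K_1\log\norm{L_{1/2}}_2$ of Theorem 21 into the specific shape $\alpha^{-1}(\text{norm})^{-29/(25\delta)}e^{-16/\delta}$, which forces one to (a) absorb $\delta$ into the right place (dividing through by $\delta$ and exponentiating), (b) verify that $K_1/\delta \le 29/(25\delta)$ and $K_0/\delta \le 16/\delta$ hold for all $0 < \delta \le 0.05$ and all $0 < \alpha \le 1$ simultaneously — which is why the hypothesis $\delta \le 0.05$ is imposed, since $K_0$ contains the term $\pi\delta^2/(4C)$ that must stay negligible — and (c) confirm that replacing $\norm{L_{1/2}}_2$ by the slightly larger $(1 + \alpha\sum|a_n|^2/(n+\alpha))^{1/2}$ is harmless. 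None of this is deep, but it is where all the care goes; the structural content is entirely Theorem 21 applied to the right shifted series, plus the elementary Jensen lower bound and the trivial identification of the integrand with $\alpha^{-1}|L(1+it)|$.
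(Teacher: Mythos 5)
Your overall architecture is the right one and matches the paper's skeleton: normalize to $L^\alpha(s)=1+\alpha^{s+1/2}\sum a_n(n+\alpha)^{-s-1/2}\in H^2(\log(n+\alpha)-\log\alpha,1/2)$ with $\norm{L^\alpha}_2^2=1+\alpha\sum|a_n|^2/(n+\alpha)$, establish a half-plane of non-vanishing, feed that into the short-interval bound for $\int\log^-$, and finish with Jensen's inequality \eqref{jensineq2} and the identity $\abs{\alpha^{-1-it}+\sum a_n(n+\alpha)^{-1-it}}=\alpha^{-1}\abs{L^\alpha(1/2+it)}$. But there is a genuine gap in step (b) of your "bookkeeping": the route through Theorem 21 cannot produce the exponent $-\tfrac{29}{25\delta}$. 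The stated bound is $\norm{L^\alpha}_2^{-58/(25\delta)}$, so you need the coefficient of $\log\norm{L}_2$ to be at most $58/25=2.32$. Theorem 21 gives $K_1=C_0K_0$ with $C_0\approx 3.17$ and $K_0\geq \pi\max(C,\log 4/\lambda_1)\geq \pi C>3.2$, hence $K_1>10$ no matter how small $\delta$ is; the inequality $K_1\leq 29/25$ you defer as "routine numerical reconciliation" is simply false. The source of the loss is the generic choice $\xi=1/2$ in Lemma 13, which forces the non-vanishing half-plane to start at $D=\max(C,\log 4/\lambda_1)\approx 2$, and it is $\pi(D+\delta^2/4D)$ that multiplies $\log\norm{L}_2$ in Lemma 12.

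The paper explicitly declines to use the general Theorem 21/23 for exactly this reason and instead applies Lemma 9 and Lemma 12 directly with a hand-picked $D=0.7378$: one bounds $\norm{L^\alpha_D-1}_1$ by comparison with $\zeta(s+1/2,\alpha)\alpha^s$, whose shifted $L^1$-norm is maximized at $\alpha=1$ where it equals $\zeta(1+D)=1.98357<2$, so Lemma 9 applies with $\xi=2-\zeta(1+D)=0.01642$. This small $D$ gives $\pi(D+\delta^2/4D)\leq 2.32=58/25$ as required, at the price of a large additive term coming from $-\log(2-\zeta(1+D))$, which is where the $e^{-16/\delta}$ factor is absorbed. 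To repair your proof you must replace the appeal to Theorem 21 by this tailored non-vanishing estimate (or some equivalent choice of $D$ well below $1$); you should also track the factor $\delta$ that Jensen's inequality leaves in front of the exponential, rather than discarding it.
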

\begin{proof}
  Let \begin{gather*}
   L^\alpha(s)=1+\alpha^{s+1/2} \sum_{n=1}^\infty a_n (n+\alpha)^{-s-1/2}.
 \end{gather*}
 We have that $L^\alpha(s) \in H^2(\log(n+\alpha)-\log(\alpha),1/2)$  if the sum on the right hand side in the Lemma is finite. We assume this since otherwise the statement is trivially true (the right hand side equals zero).   Choose \begin{gather} D= 0.7378. \end{gather}
Numerical investigations show that 
 \begin{gather*}
   \zeta\p{1+ D}=\zeta(1.7378)=1.98357\leq 1.98358. 
 \end{gather*}
 It is clear that if
    $$A^{\alpha}(s)=\zeta\p{s+1/2,\alpha} \alpha^s$$
   then $
         \norm{A^{\alpha}_x}_1
        $
        is an increasing function for $0<\alpha<1$ for each $x>0$ and thus takes its maximum for $\alpha=1$. It is also clear that $\norm{L_{x}^\alpha}_1 \leq \norm{A_x^\alpha}_1$ and  by Lemma 9 this implies that
  \begin{gather*}
   0.01642  \leq 2-\zeta(1+D)   \leq \abs{L^\alpha(s)} \leq \zeta(1+D) \leq  1.98358, \, \, \, \, \, \, (\Re(s) \geq 1+D). 
  \end{gather*}
   By Lemma 12 we have that 
   \begin{multline} \label{ort}
     \int_0^{\delta} \log^{-} L^\alpha\p{\frac 1 2+it}  \leq \pi \p{D+\frac{\delta^2} {4D}} \log \norm{L^\alpha} +  \\ +  \pi \p{D+\frac{\delta^2} {4D}} \p{\kappa+\log \p{1+\frac{3 \pi C} D}-\log (2-\zeta(1+D))  }.
   \end{multline}
The terms on the right hand side are maximized for $\delta=0.05$ whenever $0<\delta \leq 0.05$. In particular, the last term can be estimated by
\begin{gather*}
\pi \p{D+\frac{0.05^ 2} {4D}} \p{\kappa+\log \p{1+\frac{3 \pi C} D}-\log (2-\zeta(1+D))}=15.976 \leq 16, 
\end{gather*}
where we have used the value of $\kappa$ defined by Theorem 10, and  the value of $C$ given by Lemma 2.
Furthermore the last term on the right hand side of \eqref{ort} can be estimated by using the fact that 
$$
 \pi D \p{1+\frac{0.05^2}{4D}}=2.3198\leq 2.32=\frac {58} {25}.
$$
Together, these estimates implies that for $0<\delta \leq 0.05$ we have that
$$
 \int_0^{\delta} \log^{-} L^\alpha(1/2+it) dt \leq \frac{58} {25} \log \norm{L^\alpha}_2+16.
$$
By Jensen's inequality \eqref{jensineq2}  we obtain our result.
\end{proof}
\begin{rem}
  The best possible constant in the exponent in Lemma 14 (that is valid for all $0<\alpha\leq 1)$ that we can obtain by this method is $\pi (\zeta^{-1}(2)-1)/2+\epsilon=1.145 \ldots+\epsilon<1.16=29/25$ for any $\epsilon>0$. Then however the constant $16$ will be larger and depend on $\epsilon$. In another direction,  when $\alpha<1$, for small positive values of $\alpha$ the constants $16$ and $1.16$ can be considerably improved. The values of the constants can also be improved if for example we assume that $0<\delta \leq 0.01$ instead of $0<\delta \leq 0.05$.
\end{rem}

We remark that we do not immediately get estimates for the  Hurwitz zeta-function itself by this method since for the Hurwitz zeta-function proper on the line $\Re(s)=1$ the sum on the right hand side will be divergent. One naive attempt is to estimate the Hurwitz zeta-function by a finite Dirichlet-Hurwitz polynomial. However, while this is possible, we will not get a lower bound that is independent of $T$, since the length of the polynomial will depend on $T$. In the next section we will use a convolution argument to get a lower bound independent of $T$

\section{The Hurwitz and Lerch zeta-functions}

We will first prove a lemma from which our result for the Hurwitz zeta-function will follow immediately
\begin{lem}
  Assume that $0 <\alpha \leq 1$, and that $|a_n| \leq 1$. Then  we have for $0<\delta \leq 0.05$ that
  \begin{gather*}
    \inf_{\sigma>1,T} \int_T^{T+\delta} \abs{\alpha^{-\sigma-it}+\sum_{n=1}^\infty a_n (n+\alpha)^{-\sigma-it}} dt \geq \alpha^{-1} \p{1+\frac \alpha \delta}^{-\frac{7}{6\delta}} 10^{-\frac 9 \delta}.  
  \end{gather*}
 \end{lem}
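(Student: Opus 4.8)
The plan is to deduce Lemma 15 from Lemma 14 by a convolution (averaging) argument that removes the dependence on $\sigma$ and reduces the half-plane case $\sigma>1$ to the boundary case $\sigma=1$ treated in Lemma 14. First I would observe that for fixed coefficients $a_n$ with $|a_n|\le 1$, the function
\begin{gather*}
  F_\sigma(t)=\alpha^{-\sigma-it}+\sum_{n=1}^\infty a_n(n+\alpha)^{-\sigma-it}
\end{gather*}
is, after the normalisation dividing by $\alpha^{-\sigma}$, a shifted Dirichlet series $L_x$ with $x=\sigma-1\ge 0$ in the class $H^2(\log(n+\alpha)-\log\alpha,1/2)$ (the coefficients $\tilde a_n=a_n(\alpha/(n+\alpha))^{\sigma}$ still satisfy $|\tilde a_n|\le 1$ and have square-summable tail bounded by the sum appearing on the right). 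So it suffices to bound $\int_T^{T+\delta}|L(\sigma+it)|\,dt$ from below uniformly in $T$ and in $\sigma\ge 1$.

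Next I would handle the $\sigma$-uniformity. Since $|L(\sigma+it)|$ is controlled on the half-plane $\Re(s)\ge 1+D$ by Lemma 9 exactly as in the proof of Lemma 14 (the norm $\norm{L_x}_1$ is decreasing in $x$ by Theorem 5, so the bounds $2-\zeta(1+D)\le|L(\sigma+it)|\le\zeta(1+D)$ persist for all $\sigma\ge1$), the only new point is the behaviour for $1\le\sigma<1+D$. Here I would invoke a mean-value / convolution estimate: bounding $\int_T^{T+\delta}\log^-|L(\sigma+it)|\,dt$ via Lemma 12 with the point $\sigma$ in place of the boundary line still works because Theorem 4 (locally $L^2$) holds on every line $\Re(s)=\sigma_1\ge 1/2$, so the Poisson-type bound of Lemma 12 is available at height $D$ above the line $\Re(s)=\sigma$, and $\log\abs{L(\sigma+D+i\delta/2)}\ge\log(2-\zeta(1+D))$ uniformly. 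Reassembling, the constant in the exponent acquires the extra slack needed to pass from $\kappa+\log(1+3\pi C/D)$ to the cleaner constants $7/6$ and $9$; concretely one checks, as in Lemma 14, that $\pi(D+\delta^2/(4D))\le 7/6$ after possibly slightly enlarging $D$, and that the additive term $\pi(D+\delta^2/(4D))(\kappa+\log(1+3\pi C/D)-\log(2-\zeta(1+D)))$ stays below $9\log 10$ for $0<\delta\le 0.05$; then Jensen's inequality \eqref{jensineq2} applied to $f(t)=|L(\sigma+it)|$ converts the $\log^-$ integral bound into the stated lower bound for $\int_T^{T+\delta}|F_\sigma(t)|\,dt$, and restoring the factor $\alpha^{-\sigma}\ge\alpha^{-1}$ (for $\alpha\le 1$, $\sigma\ge1$) — wait, one must be careful: $\alpha^{-\sigma}$ is increasing in $\sigma$ for $\alpha<1$, so $\alpha^{-\sigma}\ge\alpha^{-1}$, giving the claimed $\alpha^{-1}$ on the right.

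The main obstacle I expect is the $\sigma$-uniformity: Lemma 14 is stated only on $\Re(s)=1$, and one needs the estimates to degrade gracefully (with no hidden dependence on $\sigma$, $T$, or the tail sum beyond what appears) as $\sigma$ ranges over $[1,\infty)$. The key facts that make this work are that $\norm{L_x}_1$ and $\norm{L_x}_2$ are decreasing in $x$ (Theorem 5), so all norm-dependent quantities are dominated by their values at $\sigma=1$, and that the Jensen/Poisson machinery (Theorem 4, Lemma 12, Theorem 10) is valid on every vertical line in the half-plane of convergence, not just the abscissa. The bookkeeping to verify the numerical inequalities $\pi(D+\delta^2/(4D))\le 7/6$ and the additive term $\le 9\log 10$ for the chosen $D$ and $\delta\le 0.05$ is routine but must be done carefully, in the same spirit as the displayed computations in the proof of Lemma 14.
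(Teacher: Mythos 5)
There is a genuine gap: your proposal is missing the key idea of the proof and, as written, would not close. You try to apply Lemma 14 directly (after reducing $\sigma>1$ to $\sigma=1$), but the lower bound of Lemma 14 contains the factor $\p{1+\alpha\sum_{n\geq 1}|a_n|^2/(n+\alpha)}^{-29/(25\delta)}$, and for general bounded coefficients --- in particular for the Hurwitz zeta-function itself, where $a_n\equiv 1$ --- the sum $\sum_n |a_n|^2/(n+\alpha)$ diverges, so Lemma 14 gives nothing. Your remark that the shifted coefficients have a ``square-summable tail bounded by the sum appearing on the right'' is the symptom: no such sum appears on the right-hand side of Lemma 15, and for $\sigma$ close to $1$ the tail $\sum_n(\alpha/(n+\alpha))^{2(\sigma-1)}/(n+\alpha)$ blows up as $\sigma\downarrow 1$, so this route is not uniform in $\sigma$ either. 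The paper's actual mechanism (announced explicitly just before this section) is a convolution in the $t$-direction: one convolves $A(s+it,\alpha)$ against a smooth bump $\Phi\geq 0$ supported on $[0,1/175]$ with $\hat\Phi(0)=1$, which replaces $a_n$ by $b_n=a_n\hat\Phi(\delta(\log(n+\alpha)-\log\alpha))$; by Plancherel the new weighted square sum satisfies $\sum_n|b_n|^2/(n+\alpha)\leq 90/\delta$ \emph{independently of the $a_n$}, so Lemma 14 applies to the convolved series with a bound depending only on $\delta$ and $\alpha$, and since the convolution is a local average of $A$ the lower bound transfers back to $\int_T^{T+\delta}\abs{A(\sigma+it)}\,dt$. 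This smoothing step is the entire content of Lemma 15 beyond Lemma 14, and it is absent from your argument.

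Two further points. The $\sigma$-uniformity you single out as the main obstacle is essentially free once the bound depends only on $|a_n|\leq 1$: writing $\alpha^{-\sigma-it}+\sum_n a_n(n+\alpha)^{-\sigma-it}=\alpha^{-(\sigma-1)}\bigl(\alpha^{-1-it}+\sum_n a_n'(n+\alpha)^{-1-it}\bigr)$ with $a_n'=a_n(\alpha/(n+\alpha))^{\sigma-1}$, one still has $|a_n'|\leq 1$ and $\alpha^{-(\sigma-1)}\geq 1$, which disposes of all $\sigma>1$ at once. And your numerics do not match the statement: in Lemma 14 one has $\pi(D+\delta^2/(4D))\approx 2.32$, not $\leq 7/6$; the constant $7/6$ arises as $\tfrac{175}{174}\cdot\tfrac{29}{25}$ after rescaling the interval to $[0,174\delta/175]$, and the factor $10^{-9/\delta}$ absorbs both $e^{-1400/(87\delta)}$ and the contribution $90^{7/(6\delta)}$ coming from the bound $\sum_n|b_n|^2/(n+\alpha)\leq 90/\delta$.
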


\begin{proof}
  Let
   $$A(s,\alpha)= 1+ \alpha^s \sum_{n=1}^\infty a_n (n+\alpha)^{-s}.$$
  We let   $\Phi \in C_0^\infty(\R)$ be a positive function that has support on $[0,1/175]$ such that $\hat \Phi(0)=1$. Furthermore it is clear that we may assume that
\begin{gather} \label{rrr}
 0 \leq \Phi(x) \leq 176.
\end{gather}
By the definition of the Fourier transform and taking the derivative under the integral sign it is  clear that 
\begin{gather} \label{rrr2}
 \abs{\hat \Phi'(x)} \leq \frac{1}{175},  \qquad \abs{\hat \Phi(x)} \leq 1, \qquad  (x \in \R).
\end{gather} 
Under these assumptions we have the convolution
\begin{align*}
   A_\delta(s,\alpha)&= \int_0^{\delta/175} \Phi(t/\delta) A(s+it,\alpha)dt, \\ &=
\int_{-\infty}^\infty \Phi(t/\delta)A(s+it,\alpha)dt, \\
              &=\alpha^{-s}+\sum_{n=1}^\infty b_n (n+\alpha)^{-s}, 
\end{align*}
where
 $$b_n= a_n \hat \Phi(\delta (\log(n+\alpha)-\log \alpha)).$$
By Lemma 14 and the fact that $175/174 \cdot \frac{29}{25}=7/6$ and $175/174 \cdot 16=1400/87$ we find that
\begin{gather} \label{AbbA}
  \int_0^{174\delta/175} \abs{A_\delta(s+it)}dt \geq  \alpha^{-1} \p{1 + \alpha \sum_{n=1}^\infty \frac{\abs{b_n}^2} {n+\alpha}}^{-7/(6\delta)} e^{-1400/(87 \delta)}.
\end{gather}
We calculate
\begin{gather} \label{kkk23}
\begin{split} \sum_{n=1}^\infty \frac{\abs{b_n}^2} {n+\alpha} &= \sum_{n=1}^\infty \frac{\abs{\hat \Phi(\delta \log(n+\alpha))a_n}^2} {n+\alpha},  \\ &\leq \sum_{n=1}^\infty \frac{\abs{\hat \Phi(\delta (\log(n+\alpha)-\log \alpha))}^2} {n+\alpha}, \\ &=\sum_{n=1}^\infty f(n),
\end{split}
\end{gather} 
where
$$f(x)=\frac{\abs{\hat \Phi(\delta (\log(n+\alpha)-\log \alpha))}^2}{n+\alpha}.$$
By taking the derivative of $f(x)$ and using the inequalities \eqref{rrr2} we find that
$$
 \abs{\hat f(x)} \leq \frac{1+2/175 \delta} {(n+\alpha)^{2}} \leq \frac{1.02} {(n+\alpha)^{2}}. \qquad (\delta<1).
$$
By  the following complex variant of the mean value theorem
\begin{gather*}
   \abs{\int_a^{a+1} f(x) dx -f(a)} 
\leq \frac 1 2 \max_{x \in [a,a+1]} |f'(x)|, 
\end{gather*}
and the fact that
\begin{gather*}
  \sum_{n=1}^\infty \frac{1.02} {(n+\alpha)^{2}} \leq 1.02\zeta(2) \leq 2,
\end{gather*}
we find that the sum \eqref{kkk23} can be bounded by the integral
$$ \int_1^\infty \frac{\abs{\hat \Phi(\delta \log x)}^2} x dx + 1.$$
 By the substitution $y=\delta \log x$ we see that
\begin{gather*}
  \int_1^\infty \frac{\abs{\hat \Phi(\delta \log x)}^2} x dx = \frac 1 \delta \int_0^\infty  \abs{\hat \Phi(y)}^2 dy. \\ \intertext{Since $\Phi$ is a real valued function it follows that $|\hat \Phi(x)|=|\hat \Phi(-x)|$. By the Plancherel identity we find that the integral equals}
    \frac 1 {2\delta} \int_0^{1/2} \abs{\Phi(x)}^2 dx.
\end{gather*}
Thus we have that
\begin{gather*}
 \sum_{n=1}^\infty \frac{\abs{b_n}^2} {n+\alpha} \leq  \frac 1 {2\delta} \int_0^{1/175} \abs{\Phi(x)}^2 dx + 1.
\end{gather*}
From \eqref{rrr} and the fact that $\Phi$ has support on $[0,1/175]$ it follows that
$$
 \sum_{n=1}^\infty \frac{\abs{b_n}^2} {n+\alpha} \leq  \frac {176^2} {2 \cdot 175 \delta}  + 1 \leq \frac {90} {\delta}, \qquad (\delta<0.05).
$$
Lemma 15 follows from \eqref{AbbA} and the inequalities
\begin{gather} \label{Ab1}
 1+ \alpha  \sum_{n=1}^\infty \frac{\abs{b_n}^2} {n+\alpha} \leq  \frac 1 {90} \p{1+ \frac \alpha \delta }, 
\\ \intertext{and the fact that}  \label{Ab2}
 90^{175/175} e^{1400/85}=9.0 \cdot 10^8 \leq 10^9.
\end{gather}
\end{proof}

Since the Hurwitz zeta-function is continuous up to its abscissa of convergence (except for a pole at $\Re(s)=1$) an immediate consequence of Lemma 15  is the  lower bound in the following Theorem:

\begin{thm} 
 For the Hurwitz zeta-function and  $0<\alpha \leq 1$ we have the following result
\begin{gather*}
\alpha^{-1} \p{1+\frac \alpha \delta}^{-\frac{7}{6\delta}} 10^{-\frac 9 \delta} \leq   \inf_T  \int_{T}^{T+\delta} \abs{\zeta(1+it,\alpha)}dt.
\end{gather*}
\end{thm}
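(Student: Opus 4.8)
The plan is to deduce Theorem~26 directly from Lemma~15 by specializing the coefficients and passing to the boundary line $\Re(s)=1$. First I would apply Lemma~15 with the choice $a_n=1$ for all $n\ge 1$, which is admissible since $|a_n|\le 1$. With this choice the Dirichlet series inside the integral becomes exactly $\alpha^{-\sigma-it}\zeta(\sigma+it,\alpha)$, because
\begin{gather*}
 \alpha^{-\sigma-it}+\sum_{n=1}^\infty (n+\alpha)^{-\sigma-it} = \sum_{n=0}^\infty (n+\alpha)^{-\sigma-it} = \zeta(\sigma+it,\alpha).
\end{gather*}
Hence Lemma~15 gives, for every $0<\delta\le 0.05$,
\begin{gather*}
 \inf_{\sigma>1,\,T}\int_T^{T+\delta}\abs{\zeta(\sigma+it,\alpha)}\,dt \;\ge\; \alpha^{-1}\p{1+\frac\alpha\delta}^{-\frac{7}{6\delta}}10^{-\frac9\delta},
\end{gather*}
where I have multiplied through by $\alpha^{-\sigma}\le\alpha^{-1}$... actually more carefully, the left side of Lemma~15 already has the factor $\alpha^{-\sigma-it}$ absorbed, so the bound is stated as is; the right-hand side is exactly the claimed lower bound.

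The remaining point is to let $\sigma\downarrow 1$ and replace the infimum over $\sigma>1$ by the value at $\sigma=1$. Here I would use the fact, recalled in the sentence preceding the theorem, that the Hurwitz zeta-function $\zeta(s,\alpha)$ is continuous (indeed analytic) on the closed half-plane $\Re(s)\ge 1$ except for the simple pole at $s=1$ itself. Fix $T>0$ (so that the interval $[T,T+\delta]$ avoids a neighbourhood of the real point $s=1$) and observe that $\zeta(\sigma+it,\alpha)\to\zeta(1+it,\alpha)$ uniformly for $t\in[T,T+\delta]$ as $\sigma\downarrow 1$; by dominated convergence (or uniform convergence) the integral $\int_T^{T+\delta}\abs{\zeta(\sigma+it,\alpha)}\,dt$ converges to $\int_T^{T+\delta}\abs{\zeta(1+it,\alpha)}\,dt$. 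Since each term in the family with $\sigma>1$ is bounded below by the $\sigma$-independent quantity on the right, so is the limit; taking the infimum over admissible $T$ then yields
\begin{gather*}
 \alpha^{-1}\p{1+\frac\alpha\delta}^{-\frac{7}{6\delta}}10^{-\frac9\delta} \le \inf_T\int_T^{T+\delta}\abs{\zeta(1+it,\alpha)}\,dt,
\end{gather*}
which is the assertion of Theorem~26.

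The main obstacle, such as it is, is the legitimacy of the limit $\sigma\downarrow1$ at the boundary line $\Re(s)=1$, where $\zeta(s,\alpha)$ has a pole; one must be slightly careful that the intervals $[T,T+\delta]$ stay bounded away from $t=0$ when $\sigma$ is close to $1$. This is harmless because the infimum is taken over $T$, and for the purpose of a lower bound we only need that the $\sigma>1$ estimates survive in the limit — the pole at $s=1$ can only make $\abs{\zeta(1+it,\alpha)}$ larger near $t=0$, never smaller, so no loss occurs. Everything else is a direct substitution into Lemma~15, and no further computation is needed.
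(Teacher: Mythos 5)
Your proposal is correct and follows essentially the same route as the paper: the paper likewise obtains Theorem 26 as an immediate consequence of Lemma 15 with $a_n=1$, invoking the continuity of $\zeta(s,\alpha)$ up to the line $\Re(s)=1$ (away from the pole at $s=1$) to pass to the limit $\sigma\downarrow 1$. Your additional remarks on the uniform convergence on $[T,T+\delta]$ and on the harmlessness of the pole merely spell out what the paper leaves implicit.
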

Similarly, the Lerch zeta-function (for its theory see e.g the monograph of ~Garunk{\v{s}}tis-Laurin{\v{c}}ikas \cite{GarLau}) defined by 
$$
 \phi(\alpha,\beta;s)=\sum_{n=0}^\infty e^{2 \pi i n \alpha} (n+\beta)^{-s}, \qquad \qquad (0<\alpha,\beta \leq 1)
$$
for $\Re(s)>1$ and by analytic continuation elsewhere, is continuous up to $\Re(s)=1$ except for a possible pole at  $s=1$. Thus Lemma 15 also implies a similar theorem for this case:
\begin{thm} 
 For the Lerch zeta-function $\phi(\alpha,\beta;s)$  and  $0<\alpha,\beta \leq 1$  we have  the following result \begin{gather*}
 \beta^{-1} \p{1+\frac \beta \delta}^{-\frac{7}{6\delta}} 10^{-\frac 9 \delta}  \leq  \inf_T  \int_{T}^{T+\delta} \abs{\phi(\alpha,\beta;1+it)}dt.
\end{gather*}
\end{thm}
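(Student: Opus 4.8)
The plan is to observe that the Lerch zeta-function is already precisely of the shape handled by Lemma 15, and then — just as in the deduction of the theorem on the Hurwitz zeta-function above — to descend from the half-plane $\Re(s)>1$ to the line $\Re(s)=1$. Writing the defining series as
\[
 \phi(\alpha,\beta;s)=\sum_{n=0}^\infty e^{2\pi i n\alpha}(n+\beta)^{-s}=\beta^{-s}+\sum_{n=1}^\infty e^{2\pi i n\alpha}(n+\beta)^{-s},
\]
I would set $a_n=e^{2\pi i n\alpha}$, so that $|a_n|=1$ for every $n$; in particular $a_0=1$ and $|a_n|\le 1$. With the parameter $\beta$ of the present theorem playing the role of the ``$\alpha$'' in Lemma 15, the integrand appearing there is exactly $\abs{\phi(\alpha,\beta;\sigma+it)}$, so Lemma 15 gives at once, for $0<\delta\le 0.05$,
\[
 \inf_{\sigma>1,\,T}\int_T^{T+\delta}\abs{\phi(\alpha,\beta;\sigma+it)}\,dt\ \geq\ \beta^{-1}\p{1+\frac{\beta}{\delta}}^{-\frac{7}{6\delta}}10^{-\frac{9}{\delta}}.
\]

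It then remains to let $\sigma\to 1^+$, the point being that this lower bound is uniform in $\sigma>1$ and in $T$. Fix $T$. When $\alpha\neq 1$ the function $s\mapsto\phi(\alpha,\beta;s)$ is entire, while for $\alpha=1$ it is the Hurwitz zeta-function $\zeta(s,\beta)$, holomorphic apart from a simple pole at $s=1$; in either case $\phi(\alpha,\beta;\cdot)$ is continuous up to $\Re(s)=1$ away from the single possible pole at $s=1$ (for the analytic theory of the Lerch zeta-function see Garunk{\v{s}}tis--Laurin{\v{c}}ikas \cite{GarLau}). If $\alpha=1$ and $0\in[T,T+\delta]$, then $\abs{\phi(\alpha,\beta;1+it)}\asymp\abs{t}^{-1}$ near $t=0$, hence $\int_T^{T+\delta}\abs{\phi(\alpha,\beta;1+it)}\,dt=\infty$ and the asserted inequality is trivial. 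Otherwise $[T,T+\delta]$ is a compact interval free of poles of $\phi(\alpha,\beta;\cdot)$, so $\phi(\alpha,\beta;\sigma+it)\to\phi(\alpha,\beta;1+it)$ uniformly for $t\in[T,T+\delta]$ as $\sigma\to 1^+$, and therefore
\[
 \int_T^{T+\delta}\abs{\phi(\alpha,\beta;1+it)}\,dt=\lim_{\sigma\to1^+}\int_T^{T+\delta}\abs{\phi(\alpha,\beta;\sigma+it)}\,dt\ \geq\ \beta^{-1}\p{1+\frac{\beta}{\delta}}^{-\frac{7}{6\delta}}10^{-\frac{9}{\delta}}.
\]
Taking the infimum over $T$ finishes the argument.

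I do not expect any real obstacle here: the whole content is the recognition that $\phi(\alpha,\beta;s)$ has leading term $\beta^{-s}$ and unimodular coefficients, so that Lemma 15 applies verbatim after relabelling the parameter $\alpha$ as $\beta$. The only step that requires a little care is the passage $\sigma\to 1^+$, where one must isolate the harmless pole at $s=1$ occurring in the specialization $\alpha=1$; away from that point, continuity of the Lerch zeta-function up to $\Re(s)=1$ together with uniform convergence on the compact interval $[T,T+\delta]$ transfers the bound. As with the Hurwitz case, the restriction $0<\delta\le 0.05$ is inherited from Lemma 15 and should be understood in the statement.
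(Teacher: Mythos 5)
Your proposal is correct and follows essentially the same route as the paper: the Lerch series $\phi(\alpha,\beta;s)=\beta^{-s}+\sum_{n\geq1}e^{2\pi in\alpha}(n+\beta)^{-s}$ is exactly the shape covered by Lemma 15 with unimodular coefficients and with $\beta$ in the role of that lemma's $\alpha$, and the bound is carried to $\Re(s)=1$ by continuity away from the possible pole at $s=1$. Your explicit treatment of the limit $\sigma\to1^+$ (and of the pole case, where the integral diverges and the inequality is trivial) only makes precise what the paper leaves implicit.
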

We will also state versions of these inequalities when we take the infimum of $\alpha$ and $\beta$. First we state two variants Lemma 15. Since the classical Dirichlet series case is of special interest we state a version for this case:
\begin{lem}
  Assume   that $|a_n| \leq 1$. Then  we have for $0<\delta \leq 0.05$ that
  \begin{gather*}
    \inf_{\sigma>1,T}  \int_T^{T+\delta} \abs{1+\sum_{n=1}^\infty a_n n^{-\sigma-it}} dt \geq \delta^{\frac{7}{6\delta}} 10^{-\frac 9 \delta}.  
  \end{gather*}
 \end{lem}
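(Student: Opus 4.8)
The plan is to transplant the proof of Lemma 15 to the classical case, with the Dirichlet series $L$ of the statement --- constant coefficient $1$, the remaining coefficients bounded by $1$ --- playing the role of the Dirichlet--Hurwitz series $A(s,\alpha)$; this is exactly the endpoint $\alpha=1$ of that construction, so the controlling parameters here are the sharp values $\lambda_1=\log 2$ and $C=(\sqrt 2\log 2)^{-1}$ (Lemma 1). Since $\sum\abs{a_n}^2$ may diverge, $L$ itself need not lie in any of our Hardy classes, so --- as in Lemma 15 --- one first regularises. Let $\Phi\in C_0^\infty(\R)$ be the bump used there (supported on $[0,1/175]$, $\hat\Phi(0)=1$, $0\le\Phi\le 176$, $\abs{\hat\Phi}\le 1$, $\abs{\hat\Phi'}\le 1/175$) and set
\begin{gather*}
 L_\delta(s)=\int_0^{\delta/175}\Phi(t/\delta)L(s+it)\,dt=1+\sum_{n\ge 2}b_n n^{-s},\qquad b_n=a_n\hat\Phi(\delta\log n).
\end{gather*}
Since $\hat\Phi$ is Schwartz the $b_n$ decay rapidly, so $\sum_{n\ge 2}\abs{b_n}^2/n<\infty$ and the shift $w\mapsto 1+\sum_{n\ge 2}b_n n^{-1/2-w}$ belongs to $H^2(\log(n+1),\halv)\subset H^2((\sqrt 2\log 2)^{-1},\halv)$ by Lemma 1.

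I would first record the classical counterpart of Lemma 14: for $\abs{c_n}\le 1$ and $0<\delta\le 0.05$,
\begin{gather*}
 \int_0^\delta\abs{1+\sum_{n\ge 2}c_n n^{-1-it}}\,dt\ \ge\ \Bigl(1+\sum_{n\ge 2}\frac{\abs{c_n}^2}{n}\Bigr)^{-29/(25\delta)}e^{-16/\delta}.
\end{gather*}
Its proof is that of Lemma 14 with no essential change: write $\widetilde L(w)=1+\sum_{n\ge 2}c_n n^{-1/2-w}$, so that $\widetilde L(\halv+it)=1+\sum c_n n^{-1-it}$ and $\norm{\widetilde L}_2^2=1+\sum_{n\ge 2}\abs{c_n}^2/n$; with the auxiliary choice $D=0.7378$ one has $\norm{\widetilde L_D}_1\le\zeta(1+D)$, hence $2-\zeta(1+D)\le\abs{\widetilde L(\halv+it)}$ on the relevant half--plane by Lemma 9; Lemma 12 (with $C=(\sqrt 2\log 2)^{-1}$ from Lemma 1 and $\kappa$ from Theorem 10) then bounds $\int_0^\delta\log^-\abs{\widetilde L(\halv+it)}\,dt$ from above by $\tfrac{29}{25\delta}\log\norm{\widetilde L}_2^2+\tfrac{16}\delta$, and Jensen's inequality \eqref{jensineq2} turns this into the display. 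As in Lemma 15, replacing $L$ by $L(\cdot+iT)$ and by $L(\cdot+\beta)$ with $\beta=\sigma-1>0$ --- whose coefficients $c_n n^{-iT}$, $c_n n^{-\beta}$ are still bounded by $1$ --- makes the estimate uniform in $T\in\R$ and $\sigma>1$, producing the $\inf_{\sigma>1,T}$.

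Applying this to $L_\delta$ on the interval of length $174\delta/175$ left over after convolving with a bump of width $\delta/175$ rescales the exponents by $175/174$, turning $29/25$ into $7/6$ and $16$ into $1400/87$; and the bound on $1+\sum_{n\ge 2}\abs{b_n}^2/n$ is obtained exactly as in Lemma 15 --- the parameter $\alpha$ never entering that computation --- giving $\sum_{n\ge 2}\abs{b_n}^2/n\le\frac1{2\delta}\int_0^{1/175}\abs{\Phi}^2+1\le\frac{176^2}{2\cdot 175\,\delta}+1\le\frac{90}\delta$ via Plancherel and the mean--value estimates from the bound on $\abs{\hat\Phi'}$, hence $1+\sum_{n\ge 2}\abs{b_n}^2/n\le 1+90/\delta\le 90.05/\delta$ for $\delta\le 0.05$. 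Substituting yields $\int_0^{174\delta/175}\abs{L_\delta(s+it)}\,dt\ge(90.05/\delta)^{-7/(6\delta)}e^{-1400/(87\delta)}=\delta^{7/(6\delta)}(90.05)^{-7/(6\delta)}e^{-1400/(87\delta)}$, and a last use of \eqref{jensineq2} (together with Eq. \eqref{xxx} in the limit $\sigma\to 1^+$) recovers the bound for $\int_T^{T+\delta}\abs{L(\sigma+it)}\,dt$; the stated $\delta^{7/(6\delta)}10^{-9/\delta}$ then follows once $(90.05)^{7/6}e^{1400/87}\le 10^9$.

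Everything here is routine except that final numerical inequality, which I expect to be the only real obstacle. In Lemma 15 the slack needed to fold the constants into $10^{-9/\delta}$ came partly from the factor $\alpha^{-1}\ge 1$, which is absent now; instead one must lean on the favourable values $\lambda_1=\log 2$ and $C=(\sqrt 2\log 2)^{-1}$ available in the classical case and on the fact that $1+\sum_{n\ge 2}\abs{b_n}^2/n$ overshoots $1/\delta$ only by a bounded constant. Should $(90.05)^{7/6}e^{1400/87}$ still sit a hair above $10^9$, the two remedies noted after Lemma 14 apply: sharpen the exponent in the classical Lemma 14 below $29/25$ (its best value by this method is smaller), or take $\Phi$ with slightly narrower support, trading a marginally shorter leftover interval for a smaller $\int\abs{\Phi}^2$.
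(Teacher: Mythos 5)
Your argument is the same as the paper's: the paper's own proof of Lemma 16 simply observes that the proof of Lemma 15 actually establishes the sharper bound \eqref{Ab3}, namely $\alpha^{-1}\p{\tfrac1{90}+\tfrac\alpha\delta}^{-7/(6\delta)}(9\cdot10^8)^{-1/\delta}$, and reads off the classical case from the slack between $9\cdot 10^8$ and $10^9$, whereas you re-derive the same quantities from scratch (classical analogue of Lemma 14, convolution with $\Phi$, Plancherel bound $\sum\abs{b_n}^2/n\le 90/\delta$); the two routes are identical in substance. The step you single out as the possible obstacle is indeed where both you and the paper are in trouble: $90^{7/6}e^{1400/87}\approx 190.5\times 9.74\cdot 10^6\approx 1.86\cdot 10^9$, so neither your inequality $(90.05)^{7/6}e^{1400/87}\le 10^9$ nor the paper's claim \eqref{Ab2} that this product equals $9.0\cdot 10^8$ is correct, and as written the method only delivers $10^{-9.3/\delta}$ in place of $10^{-9/\delta}$. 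Since this defect is inherited from the paper rather than introduced by you, and your proposed remedies (a smaller exponent than $29/25$ in the classical Lemma 14, or retuning $\Phi$, or simply enlarging the constant in the exponent) are exactly the kind of repair required, I would count your proof as matching the paper's, with the caveat that the closing numerical constant needs fixing in both.
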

\begin{proof} This is proved in the same way as Lemma 15, and follows from  the error thrown away when using \eqref{Ab1} and \eqref{Ab2}. In particular, in Lemma 15 we could have stated the right hand side as
\begin{gather} \label{Ab3}
 \alpha^{-1} \p{\frac 1 {90}+\frac \alpha \delta}^{-\frac 7 {6 \delta}} (9 \cdot 10^8)^{-\frac 1 \delta},
\end{gather}
from which Lemma 16 follows.
\end{proof}

We also choose to state the case when we take the infimum with respect to $0<\alpha \leq 1$:
\begin{lem}
  Assume that  that $|a_n| \leq 1$. Then  we have for $0<\delta \leq 0.05$ that
  \begin{gather*}
    \inf_{\sigma>1,T} \inf_{0<\alpha \leq 1} \int_T^{T+\delta} \abs{\alpha^{-\sigma-it}+\sum_{n=1}^\infty a_n (n+\alpha)^{-\sigma-it}} dt \geq \delta^{\frac{7}{6\delta}} 10^{-\frac 9 \delta}.  
  \end{gather*}
 \end{lem}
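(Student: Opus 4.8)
The plan is to read Lemma 17 off from Lemma 15 by taking the infimum over $0<\alpha\leq1$. The only point that needs care is that the right‑hand side of Lemma 15 as displayed, $\alpha^{-1}(1+\alpha/\delta)^{-7/(6\delta)}10^{-9/\delta}$, is a shade too weak for this purpose: it is decreasing in $\alpha$ (a product of two positive decreasing factors), so its infimum over $(0,1]$ is attained at $\alpha=1$, where it equals $\delta^{7/(6\delta)}(1+\delta)^{-7/(6\delta)}10^{-9/\delta}$, and the stray factor $(1+\delta)^{-7/(6\delta)}\approx e^{-7/6}$ is below $1$. I would therefore start instead from the sharper form recorded as \eqref{Ab3} in the proof of Lemma 16, namely that for every $0<\alpha\leq1$ (and $0<\delta\leq0.05$, $\abs{a_n}\leq1$)
\begin{gather*}
 \inf_{\sigma>1,T}\int_T^{T+\delta}\abs{\alpha^{-\sigma-it}+\sum_{n=1}^\infty a_n(n+\alpha)^{-\sigma-it}}\,dt \ \geq\ \alpha^{-1}\p{\frac1{90}+\frac\alpha\delta}^{-\frac7{6\delta}}\p{9\cdot10^8}^{-\frac1\delta};
\end{gather*}
here the margin between $9\cdot10^8$ and $10^9$, together with the $1/90$ inside the power, supplies exactly the room that is lost at $\alpha=1$.

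Next I would minimise the right‑hand side over $\alpha\in(0,1]$. The factor $\phi(\alpha)=\alpha^{-1}(1/90+\alpha/\delta)^{-7/(6\delta)}$ is again a product of two positive decreasing functions of $\alpha$, hence strictly decreasing, so $\inf_{0<\alpha\leq1}\phi(\alpha)=\phi(1)=\p{1/90+1/\delta}^{-7/(6\delta)}=\p{90\delta/(90+\delta)}^{7/(6\delta)}$. Therefore the triple infimum in Lemma 17 is bounded below by $\p{90\delta/(90+\delta)}^{7/(6\delta)}\p{9\cdot10^8}^{-1/\delta}$, and it remains only to verify the elementary inequality
\begin{gather*}
 \p{\frac{90\delta}{90+\delta}}^{\frac7{6\delta}}\p{9\cdot10^8}^{-\frac1\delta}\ \geq\ \delta^{\frac7{6\delta}}10^{-\frac9\delta}\qquad(0<\delta\leq0.05).
\end{gather*}

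Cancelling $\delta^{7/(6\delta)}$ and taking logarithms, this last inequality is equivalent to $\frac76\log\frac{90}{90+\delta}+\log\frac{10}9\geq0$; since $\log\frac{90}{90+\delta}=-\log(1+\delta/90)\geq-\delta/90\geq-\frac1{1800}$ (using $\log(1+x)\leq x$ and $\delta\leq0.05$) and $\log\frac{10}9>0.105$, the left side is at least $\log\frac{10}9-\frac{7}{10800}>0.104>0$, so the inequality holds and Lemma 17 follows. The main — indeed the only — obstacle is the bookkeeping in the first paragraph: one must invoke \eqref{Ab3} rather than the displayed conclusion of Lemma 15, and then check the final numerical comparison; the monotonicity in $\alpha$ and the logarithmic estimate are routine, and no new analytic ingredient beyond Lemma 15 is required.
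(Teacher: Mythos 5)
Your proof is correct and follows essentially the same route as the paper: the paper's own (two-sentence) proof likewise reduces to the sharpened bound \eqref{Ab3} from the proof of Lemma 16 and the observation that the worst case over $\alpha$ is $\alpha=1$. You have merely made explicit the final numerical comparison $\frac76\log\frac{90}{90+\delta}+\log\frac{10}{9}>0$, which the paper leaves implicit, and correctly flagged why the displayed form of Lemma 15 alone would not suffice.
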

\begin{proof} This follows by the proof of Lemma 15.
It is clear that the sum $$ \alpha \sum_{n=1}^\infty \frac{b_n}{n+\alpha}$$ is maximized when $\alpha=1$, in which case the right hand side is minimized in Lemma 15. In the same way as in the proof of Lemma 16 the result follows by using the fact that a sharper variant \eqref{Ab3} of Lemma 15 in fact holds by the same proof method.
 \end{proof}
As consequence of Lemma 17 we obtain the following results for the Hurwitz and Lerch zeta-functions.

\begin{thm} 
 For the Hurwitz zeta-function  we have the following result
\begin{gather*}
 \delta^{\frac{7}{6\delta}} 10^{-\frac 9 \delta} \leq   \inf_{0< \alpha \leq 1} \inf_T  \int_{T}^{T+\delta} \abs{\zeta(1+it,\alpha)}dt \leq \frac{e^{-\gamma} \pi^2}{24} \delta^2 + \Oh{\delta^4}. \qquad (0<\delta \leq 0.05)
\end{gather*}
\begin{proof} The lower bound follows from Lemma 17 and the upper bound from the fact that for $\alpha=1$ the Hurwitz zeta-function equals the Riemann zeta-function and  \cite[Theorem 3]{Andersson3}. \end{proof}
\end{thm}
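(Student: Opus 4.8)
The plan is to prove Theorem~26 in two independent halves, since the lower and upper bounds come from completely different considerations. For the lower bound, the key observation is that the passage to the infimum over $0<\alpha\le 1$ has already been carried out in Lemma~17: the right-hand side of that lemma, namely $\delta^{7/(6\delta)}10^{-9/\delta}$, is already independent of $\alpha$. So the lower bound will follow by specializing Lemma~17 to the choice of coefficients $a_n=1$ for all $n$, which turns the Dirichlet--Hurwitz series $\alpha^{-\sigma-it}+\sum_{n\ge1}a_n(n+\alpha)^{-\sigma-it}$ into $\alpha^s\zeta(s+?,\alpha)$ — more precisely into $\zeta(\sigma+it,\alpha)$ up to the harmless factor $\alpha^{\sigma+it}$ of modulus $\alpha^\sigma$, and then letting $\sigma\to1^+$. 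Here I would invoke the stated fact that the Hurwitz zeta-function is continuous up to its abscissa of convergence except for the pole at $s=1$; since the pole sits at a single point, Fatou/continuity lets us pass the integral $\int_T^{T+\delta}$ to the boundary line $\Re(s)=1$ without losing the bound. One should double-check that the factor $\alpha^{\sigma}\to\alpha$ is absorbed correctly: in Lemma~17 the $\alpha^{-1}$ prefactor has already been dropped when taking the infimum over $\alpha$, so nothing further is needed.

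For the upper bound, the strategy is simply to reduce to the already-known one-variable statement. Taking $\alpha=1$ in the infimum over $\alpha$ can only decrease the quantity $\inf_T\int_T^{T+\delta}|\zeta(1+it,\alpha)|\,dt$, i.e.
\[
\inf_{0<\alpha\le1}\inf_T\int_T^{T+\delta}\abs{\zeta(1+it,\alpha)}\,dt\;\le\;\inf_T\int_T^{T+\delta}\abs{\zeta(1+it)}\,dt,
\]
because $\zeta(s,1)=\zeta(s)$. Then I would quote \cite[Theorem 3]{Andersson3}, which gives the asymptotic $\inf_T\int_T^{T+\delta}|\zeta(1+it)|\,dt = \frac{e^{-\gamma}\pi^2}{24}\delta^2+O(\delta^4)$, and the chain of inequalities is complete. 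The only subtlety is making sure the cited theorem is stated in exactly this normalization (interval length $\delta$, line $\Re(s)=1$, the constant $e^{-\gamma}\pi^2/24$); assuming it is, this half is essentially immediate.

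I expect the main (minor) obstacle to be the boundary-limit argument in the lower bound: Lemma~17 is proved for $\Re(s)=\sigma>1$ with the infimum taken over $\sigma>1$ and over $T$, whereas the theorem asserts the bound on the line $\Re(s)=1$ itself. Because $\zeta(s,\alpha)$ has a pole at $s=1$ but is otherwise continuous on $\Re(s)\ge1$, and because the integrand $|\zeta(\sigma+it,\alpha)|$ converges pointwise (for $t\ne0$, and for $T>0$ the point $t=0$ is irrelevant) and is dominated appropriately near the boundary, one can let $\sigma\to1^+$ under the integral; alternatively, since Lemma~17 already has $\inf_{\sigma>1}$ built in, one can argue that the value on $\Re(s)=1$ is the limit of (equivalently, is bounded below by) values for $\sigma>1$ by lower semicontinuity. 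Writing this cleanly — handling the pole, and confirming that taking $a_n\equiv1$ is a legitimate choice in Lemma~17 (it requires $|a_n|\le1$, which holds with equality) — is the only place where care is needed; everything else is bookkeeping.

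\begin{proof}
The lower bound follows from Lemma~17 by choosing $a_n=1$ for all $n\ge1$, which is permissible since $|a_n|\le1$. With this choice $\alpha^{-\sigma-it}+\sum_{n=1}^\infty a_n(n+\alpha)^{-\sigma-it}=\alpha^{-\sigma-it}\cdot\alpha^{\sigma+it}\zeta(\sigma+it,\alpha)\cdot\alpha^{-\sigma-it}$; more simply, $\sum_{n=0}^\infty(n+\alpha)^{-\sigma-it}=\zeta(\sigma+it,\alpha)$, so the left-hand side of Lemma~17, after removing the $\alpha^{-1}$ prefactor as is done there when passing to $\inf_{0<\alpha\le1}$, bounds $\inf_{\sigma>1,T}\inf_{0<\alpha\le1}\int_T^{T+\delta}|\zeta(\sigma+it,\alpha)|\,dt$ from below by $\delta^{7/(6\delta)}10^{-9/\delta}$. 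Since the Hurwitz zeta-function is continuous on $\Re(s)\ge1$ except for the pole at $s=1$, and since for $T>0$ the segment $[T,T+\delta]$ avoids the pole, letting $\sigma\to1^+$ gives
\[
\delta^{\frac{7}{6\delta}}10^{-\frac9\delta}\le\inf_{0<\alpha\le1}\inf_T\int_T^{T+\delta}\abs{\zeta(1+it,\alpha)}\,dt,
\]
which is the asserted lower bound. For the upper bound, specialize to $\alpha=1$, so that $\zeta(s,1)=\zeta(s)$; then
\[
\inf_{0<\alpha\le1}\inf_T\int_T^{T+\delta}\abs{\zeta(1+it,\alpha)}\,dt\le\inf_T\int_T^{T+\delta}\abs{\zeta(1+it)}\,dt=\frac{e^{-\gamma}\pi^2}{24}\delta^2+\Oh{\delta^4}
\]
by \cite[Theorem 3]{Andersson3}. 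Combining the two bounds proves the theorem.
\end{proof}
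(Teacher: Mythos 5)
Your proposal is correct and follows the same route as the paper: the lower bound comes from specializing Lemma 17 to $a_n\equiv 1$ (so the Dirichlet--Hurwitz series becomes $\zeta(\sigma+it,\alpha)$) and passing to the boundary $\Re(s)=1$ by continuity, and the upper bound comes from taking $\alpha=1$ and quoting the asymptotic for $\int_T^{T+\delta}|\zeta(1+it)|\,dt$. The only blemish is the garbled intermediate identity in your first display, which you immediately correct with $\sum_{n=0}^\infty(n+\alpha)^{-\sigma-it}=\zeta(\sigma+it,\alpha)$, so the substance is unaffected.
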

\begin{thm} 
 For the Lerch zeta-function $\phi(\alpha,\beta;s)$   we have  the following result \begin{gather*}
 \delta^{\frac{7}{6\delta}} 10^{-\frac 9 \delta}  \leq  \inf_T \inf_{0< \alpha,\beta \leq 1} \int_{T}^{T+\delta} \abs{\phi(\alpha,\beta;1+it)}dt \leq\frac{e^{-\gamma} \pi^2}{24} \delta^2 + \Oh{\delta^4},  \, \,  (0<\delta \leq 0.05)
\end{gather*}
\end{thm}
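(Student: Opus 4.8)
The plan is to deduce Theorem 30 for the Lerch zeta-function from Lemma 17 exactly as Theorem 29 for the Hurwitz zeta-function was deduced from it, together with an upper bound coming from the special case of the Riemann zeta-function. Recall that the Lerch zeta-function $\phi(\alpha,\beta;s)=\sum_{n=0}^\infty e^{2\pi i n\alpha}(n+\beta)^{-s}$ has coefficients $a_n = e^{2\pi i n\alpha}$ with $|a_n|=1$, and the leading term $n=0$ contributes $\beta^{-s}$. Thus, writing $\phi(\alpha,\beta;s) = \beta^{-s}\bigl(1 + \beta^{s}\sum_{n=1}^\infty e^{2\pi i n\alpha}(n+\beta)^{-s}\bigr)$, the quantity $\beta^s \phi(\alpha,\beta;s)$ is precisely of the form $\alpha^{-s}+\sum_{n=1}^\infty a_n(n+\alpha)^{-s}$ treated in Lemma 17 (with the roles of the shift parameter played by $\beta$ and the bounded coefficients $a_n=e^{2\pi i n\alpha}$).

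First I would invoke Lemma 17, which gives, for $\sigma>1$, $T$ arbitrary, and $0<\delta\le 0.05$, the lower bound $\int_T^{T+\delta}|\beta^{-\sigma-it}+\sum_{n=1}^\infty a_n (n+\beta)^{-\sigma-it}|\,dt \ge \delta^{7/(6\delta)} 10^{-9/\delta}$, uniformly in $0<\beta\le 1$ and in the bounded coefficient sequence; in particular one may take $a_n = e^{2\pi i n\alpha}$. Since $|\beta^{\sigma+it}|=\beta^\sigma \le \beta$ for $\sigma>1$ and $0<\beta\le 1$, this already implies $\int_T^{T+\delta}|\phi(\alpha,\beta;\sigma+it)|\,dt \ge \beta \cdot \beta^{-1}\delta^{7/(6\delta)} 10^{-9/\delta}$... more carefully: $|\phi(\alpha,\beta;\sigma+it)| = |\beta^{-\sigma-it}|\,|1+\beta^{\sigma+it}\sum\cdots|$, so the bound transfers directly after multiplying through by $\beta^{-\sigma}$, and since $\beta^{-\sigma}\ge \beta^{-1}$ is in the wrong direction one instead keeps the $\beta^{-1}$-shaped constant as in Lemma 15/17. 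Then I would pass to the limit $\sigma\to 1^+$: the Lerch zeta-function is continuous on $\Re(s)=1$ away from the single possible pole at $s=1$, so for any fixed $\delta$ and any $T$ the integral $\int_T^{T+\delta}|\phi(\alpha,\beta;1+it)|\,dt$ is the limit (or, near $t=0$, is only made larger by the pole) of the corresponding integrals for $\sigma>1$; hence the lower bound $\delta^{7/(6\delta)}10^{-9/\delta}$ survives. Taking the infimum over $0<\alpha,\beta\le 1$ and over $T$ preserves the inequality since the bound was already uniform.

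For the upper bound I would specialize to a convenient choice of parameters making $\phi$ reduce to the Riemann zeta-function. Indeed $\phi(1,1;s)=\sum_{n=0}^\infty (n+1)^{-s}=\zeta(s)$, and $0<\alpha=1\le 1$, $0<\beta=1\le 1$ is an admissible pair. Therefore $\inf_{0<\alpha,\beta\le 1}\inf_T\int_T^{T+\delta}|\phi(\alpha,\beta;1+it)|\,dt \le \inf_T\int_T^{T+\delta}|\zeta(1+it)|\,dt$, and by \cite[Theorem 3]{Andersson3} the latter is $\frac{e^{-\gamma}\pi^2}{24}\delta^2 + \Oh{\delta^4}$, which is exactly the claimed upper bound. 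Combining the two halves yields Theorem 30.

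The main obstacle is the passage to the line $\Re(s)=1$ in the presence of the pole at $s=1$: one must check that the presence of the pole of $\phi(\alpha,\beta;s)$ at $s=1$ (which occurs for instance when $\alpha=1$) does not invalidate taking the limit $\sigma\to 1^+$ inside the short-interval integral. This is handled just as in the Hurwitz case preceding Theorem 29: the pole only increases $|\phi(\alpha,\beta;1+it)|$ near $t=0$, and for $T$ bounded away from $-\mathrm{Im}$ of the pole the integrand converges locally uniformly, so Fatou's lemma (or dominated convergence on the regular part plus nonnegativity near the pole) gives the desired inequality; when $\alpha\neq 1$ there is no pole at all and the argument is simply continuity. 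Everything else is a direct transcription of the Hurwitz argument with $\beta$ in place of $\alpha$ and $e^{2\pi i n\alpha}$ as the bounded coefficients.
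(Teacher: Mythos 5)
Your proposal follows the paper's proof exactly: the lower bound is Lemma 17 applied to $\phi(\alpha,\beta;\sigma+it)=\beta^{-\sigma-it}+\sum_{n\ge1}e^{2\pi i n\alpha}(n+\beta)^{-\sigma-it}$ (with $\beta$ in the role of the Lemma's $\alpha$ and $a_n=e^{2\pi i n\alpha}$ as the bounded coefficients), pushed to $\Re(s)=1$ by continuity away from the possible pole at $s=1$, and the upper bound comes from specializing $\alpha=\beta=1$ so that $\phi$ reduces to $\zeta$ and quoting the asymptotic from the cited paper. The only cosmetic difference is that you briefly factor out $\beta^{-s}$ before noticing Lemma 17 already applies verbatim; no gap.
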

\begin{proof} The lower bound follows from Lemma 17 and the upper bound from the fact that for $\alpha,\beta=1$ the Lerch zeta-function equals the Riemann zeta-function and  \cite[Theorem 3]{Andersson3}. \end{proof}
Although Theorem 27 gives an explicit version of  \cite[Corollary 3]{Andersson4}, it is still a weak estimate compared to what we  proved for the Riemann zeta-function \cite[Theorem 3]{Andersson3}.
\begin{gather}  \label{aui}
 \inf_T \int_{T}^{T+\delta} \abs{\zeta(1+it)}dt =\frac{e^{-\gamma} \pi^2}{24} \delta^2 + \Oh{\delta^4}, \\
 \label{auui} \inf_T \int_{T}^{T+\delta} \abs{\zeta(1+it)}^{-1}dt = \frac{e^{-\gamma}}{4} \delta^2 + \Oh{\delta^4}.  
\end{gather}
 To prove this result we also used a version of Jensen's inequality \eqref{jensineq2}. However, the fact that the Riemann zeta function possess an Euler product allowed us to get a substantially sharper result. In fact the result is  surprisingly sharp since it turned out that a function that minimized a logarithmic $L^1$ norm in fact was approximately constant and thus also minimized the $L^p$ norm.

We remark that also the results in our paper \cite{Andersson4} can be made effective with this 
result, although it requires some small work. We will do this in 
\cite{Andersson5}. Our result for the Riemann zeta-function suggests the following problems.
\begin{prob}
 Determine asymptotic estimates for the quantities in Theorem 29 and Theorem 30. The problem may also be considered for fixed $\alpha,\beta$ (Theorems 27 and 28). Will it be different if $\alpha$ is rational, algebraic irrational or transcendent?
\end{prob}
This problem seems quite difficult. A simpler variant is the following.
\begin{prob}   
 Can the lower bound in Theorems 29 and 30 be replaced by a
 $\delta^N$ for some sufficiently  large $N$? Is $N=2$ possible or can we obtain a better upper bound than something of the order $\delta^2$. Can we at least prove a lower bound of the order $e^{-C/\delta}$?
\end{prob} 
We remark that if we can prove a lower bound of order $\delta^2$ in Problem 2 this would imply that the Hurwitz zeta-function has no double zeroes on $\Re(s)=1$. In contrast with the Riemann zeta-function, it is well known that the Hurwitz zeta-function may have zeroes on $\Re(s)=1$. Equation \eqref{auui} also suggests the following problem
\begin{prob} 
 Find a nontrivial lower bound for every $\delta>0$ for the quantity
\begin{gather*}
 \inf_T\inf_{0< \alpha \leq 1} \int_{T}^{T+\delta} \abs{\zeta(1+it,\alpha)}^{-1}dt.
\end{gather*}
or prove that it equals zero for some or all $\delta>0$.
\end{prob}
Similar problems may be stated for the Lerch zeta-function. The difficulty with the proof method we used to prove Lemma 15 is that it is not likely that the inverse of the Dirichlet series will be a Dirichlet series of our class, and when  we take the convolution with the Dirichlet series directly we smooth out the function and the inequality will be in the wrong direction.
\begin{rem}
 Problems 1,2,3 may also be stated in $L^p$-norm and sup-norm.
\end{rem}
\section{The min-max problem for our general classes of Dirichlet series}
For the general classes of functions similar problems might be stated. For example in sup-norm we have the following problem 
\begin{prob}
 Determine for $M,\delta,\sigma,C>0$ the following quantity
  \begin{gather*}
    \min_{\substack{L \in H^2(C,\sigma), \\ a_0=1, \norm{L}_2=M }} \max_{t \in [0,\delta]} \abs{L(\sigma+it)}.
  \end{gather*}
\end{prob}
Theorem 18 gives a nontrivial lower bound that shows that it is not zero. A special case is the problem for the classical Hardy class $H^2$ of Dirichlet series.
\begin{prob}
 Determine for $M,\delta>0$ the following quantity
  \begin{gather*}
    \min_{L \in H^2, a_0=1, \norm{L}_2=M} \max_{t \in [0,\delta]} \abs{L(1/2+it)}.
  \end{gather*}
\end{prob}
  Let $A(s)=(1-2^{1-s})^n$. It is clear that $L \in H^2$,  and it follows from the binomial theorem that $\norm{L}_2=3^n$. Since $A(s)$ has a zero of order $n$ for $s=1$ it follows that
  $$
   \max_{t \in [0,\delta]} \abs{A(1+it)} \ll \delta^{n}.
  $$
Thus this example gives us the upper bounds
$$
  \delta^{\lfloor \log M/\log 3 \rfloor}
$$
 in Problem 4.
 Similar constructions can be obtained for problem 5 given $\sigma, C>0$ and sufficiently large $M$.

\def\cprime{$'$} \def\polhk#1{\setbox0=\hbox{#1}{\ooalign{\hidewidth
  \lower1.5ex\hbox{`}\hidewidth\crcr\unhbox0}}}
\bibliographystyle{plain}

\end{document}